\theoremstyle{plain}
\newtheorem{theorem}{\sc Theorem}[section]
\newtheorem{lemma}[theorem]{\sc Lemma}
\newtheorem{corollary}[theorem]{\sc Corollary}
\newtheorem{proposition}[theorem]{\sc Proposition}
\theoremstyle{definition}
\newtheorem{definition}[theorem]{\sc Definition}
\newtheorem{remark}[theorem]{\sc Remark}
\newtheorem{example}[theorem]{\sc Example}
\def\Xsp{X}
\def\XspN{(\Xsp, \, \|\, \cdot \, \|_\Xsp)}
\title[]{On some properties of  modulation
spaces as  Banach algebras}
\author{
Hans G. Feichtinger, Masaharu Kobayashi, Enji Sato}
\address{
Hans G. Feichtinger \\
Faculty of Mathematics,
University of Vienna,
Oskar-Morgenstern-Platz 1, A-1090 Wien, Austria
and
Acoustic Research Institute,
Austrian Academy of Sciences
}
\email{hans.feichtinger@univie.ac.at}
\address{
Masaharu Kobayashi \\
Department of Mathematics,
Hokkaido University,
Kita 10, Nishi 8, Kita-Ku, Sapporo,
Hokkaido, 060-0810, Japan}
\email{m-kobayashi@math.sci.hokudai.ac.jp}
\address{Enji Sato \\
Faculty of Science,
Yamagata University,
Kojirakawa 1-4-12, Yamagata-City,
Yamagata 990-8560,
Japan}
\email{esato@sci.kj.yamagata-u.ac.jp}
\keywords{modulation spaces, Wiener-L\'evy theorem,
set of spectral synthesis, Segal algebra}
\subjclass[2010]{42B35, 43A45, 42B10}
\date{\today}
\begin{document}
\maketitle

\begin{abstract}
In this paper, we give some properties of
the modulation spaces
$M_s^{p,1}({\mathbf R}^n)$ as  commutative
Banach algebras.
In particular, we
show the Wiener-L\'evy theorem for $M^{p,1}_s({\mathbf R}^n)$,
and clarify the sets of spectral synthesis
for $M^{p,1}_s ({\mathbf R}^n)$
by using the ``ideal theory for  Segal algebras''
developed in
Reiter \cite{Reiter}.
The inclusion relationship between
the modulation space $M^{p,1}_0 ({\mathbf R})$ and the Fourier
Segal algebra ${\mathcal F}\hspace{-0.08cm}A_p({\mathbf R})$ is also  determined.
\end{abstract}


\section{Introduction}

A commutative Banach algebra is a Banach space $\XspN$
which is  a commutative algebra, i.e., with an
 associative, distributive and commutative multiplication
satisfying $\| fg \|_\Xsp \leq \| f \|_\Xsp  \| g \|_\Xsp$
for all $f,g \in \Xsp$.
There are many examples of commutative  Banach algebras.
For this paper 
the Fourier algebra  $A({\mathbf R})$ is an important
example, i.e., the set of  all functions on ${\mathbf R}$
which are the Fourier transforms of functions
in $L^1({\mathbf R})$,  is a commutative
Banach algebra with pointwise multiplication, denoted
by $\mathcal{F}L^1(\mathbf R)$ elsewhere.
In addition, the Wiener algebra $A({\mathbf T})$,
which corresponds to a periodic version
of $A({\mathbf R})$, and
the Fourier-Beurling algebra
${\mathcal F}L_s^1({\mathbf R}^n)$ $(s \geq 0)$,
which can be regarded as a  generalization of $A({\mathbf R})$,
are  commutative Banach algebras
(see Section \ref{subsection Fourier-Beurling} for the definition of
${\mathcal F}L^1_s ({\mathbf R}^n)$).
As is well known, those function spaces play important roles
in various fields such as commutative Banach algebras,
operating functions, and  spectral synthesis.
We refer the reader to Kahane \cite{Kahane},
Katznelson \cite{Katznelson}, Reiter \cite{Reiter},
Reiter-Stegeman \cite{Reiter-Stegeman 2000}
 and Rudin \cite{Rudin}
for more details.

On the other hand, modulation spaces $M^{p,q}_s({\mathbf R}^n)$
form a family of function spaces
introduced by Feichtinger \cite{Feichtinger}
(see Definition \ref{def of modulation spaces}).
In some  sense, they behave like the Besov spaces $B^{p,q}_s({\mathbf R}^n)$.
But they appear to be better suited for the  description of
 problems in the area of time-frequency analysis
and are often a good substitute for the usual
spaces  $L^p({\mathbf R}^n)$ or $B^{p,q}_s({\mathbf R}^n)$
(see \cite{Feichtinger-Strohmer}, \cite{Grochenig 2001}
for more details).
It is important for our considerations  that  $M^{p,q}_s({\mathbf R}^n)$
is a  commutative Banach algebra
if  $s > n( 1 - \frac{1}{q} )$, or $q=1$ and $s \geq 0$
(cf.\ \cite{fe90}, Theorem 10),
and that it appears naturally in the study
of certain partial differential equations.
For example,  we can solve the nonlinear Schr\"odinger equations
$
i u_t + \Delta u =  | u |^{2k} u
$
in $M^{2,1}_s ({\mathbf R}^n)$ for all
$k \in {\mathbf N}$ by using
the  algebraic properties of  $M^{p,q}_s ({\mathbf R}^n)$
(see \cite{Benyi-Okoudjou}, \cite{Wang Huo Hao Guo}).
Furthermore, as a commutative Banach algebra,
$M^{p,q}_s ({\mathbf R}^n)$
satisfies many  interesting properties  similar to
$A({\mathbf R})$, $A({\mathbf T})$ and also
${\mathcal F}L^1_s({\mathbf R}^n)$
(cf.  \cite{Bhimani}, \cite{Bhimani Ratnakumar},
\cite{Kobayashi Sato 1},
\cite{Kobayashi Sato 2}, \cite{Kobayashi Sato 3},
\cite{Okoudjou}).
Therefore, it is of interest to further clarify the
properties of $M_s^{p,1}({\mathbf R}^n)$.

In this paper  we study several properties of $M_s^{p,1}({\mathbf R}^n)$ as a commutative  Banach algebra. In particular, we will establish the validity of
the Wiener-L\'evy theorem  for $M^{p,1}_s({\mathbf R}^n)$,
and see that the sets of spectral synthesis for
${\mathcal F}L^1_s ({\mathbf R}^n)$
coincide with those for $M^{p,1}_s ({\mathbf R}^n)$,
independently from $p$ (given the general restrictions).
Moreover, we will consider the  inclusion relation between
the modulation space $M^{p,1}_0 ({\mathbf R})$ and the
Fourier Segal algebra ${\mathcal F}\hspace{-0.08cm}A_p({\mathbf R})$.

 The organization of this paper is as follows. After a preliminary section devoted to the definition and basic properties of
$M^{p,q}_s({\mathbf R}^n)$ and ${\mathcal F}L^1_s ({\mathbf R}^n)$
we will demonstrate in Section \ref{section Approximate units}
that there exist approximate units for $M_s^{p,q}({\mathbf R}^n)$
(see Theorem \ref{f-psif}).
In Section \ref{closed ideals in Mp1s}
we consider the closed ideals in
 $M^{p,1}_s({\mathbf R}^n)$ and prove that closed modulation
 invariant subspaces of $M^{p,1}_s({\mathbf R}^n)$
 coincide with closed ideals in  $M^{p,1}_s({\mathbf R}^n)$.
 Section \ref{Wiener-Levy type theorems}
is devoted to the Wiener-L\'evy theorem,
which was originally proved
by Wiener  \cite{Wiener}  and by  L\'evy \cite{Levy} for $A({\mathbf T})$.
In particular we will  show
that the Wiener-L\'evy theorem and also
Wiener's general Tauberian theorem hold for $M^{p,1}_s({\mathbf R}^n)$
if  $1 \leq p < \infty$ and $s \geq 0$
(see Theorems \ref{image of analytic function}
and \ref{ideal generated by f}).
In Section \ref{Set of spectral synthesis in Mp1s}
we will consider the set of spectral synthesis
for $M^{p,1}_s({\mathbf R}^n)$.
By using the ``ideal theory for Segal algebras'' developed in
Reiter \cite{Reiter}  we will clarify the   relation between
the sets of spectral synthesis for $M^{p,1}_s({\mathbf R}^n)$
and the sets of spectral synthesis for
${\mathcal F}L^1_s({\mathbf R}^n)$ (see Theorem
\ref{equality of the set of spectral synthesis}).
In Section \ref{Appendix} we will prove that
single points of ${\mathbf R}^n$ are sets of spectral synthesis
for $M^{p,1}_s({\mathbf R}^n)$
without using Theorem \ref{equality of the set of spectral synthesis}.
Finally, in Section \ref{A relation between Mp10}
we will determine the inclusion relation between $M_0^{p,1}({\bf R}^n)$ and the
Fourier Segal algebras ${\mathcal F}\hspace{-0.08cm}A_p({\mathbf R}^n)$.

\section{Preliminaries}

The following notation will be used throughout this article. We use $C$ to denote various positive constants which may change from line to line.
We use the notation  $I \lesssim J$ if $I$ is bounded by a constant times $J$
and we denote $I \approx J$ if $I \lesssim J$ and $J \lesssim I$.
The closed ball with center $x_0 \in {\mathbf R}^n$
and radius $r>0$ is defined by
$B_r (x_0) = \{ x \in {\mathbf R}^n ~|~ |x-x_0| \leq r \}$.
For $x \in {\mathbf R}^n$, we write $\langle x \rangle = (1+|x|^2)^{\frac{1}{2}}$.
We define for $1 \leq p < \infty$
and $s \in {\mathbf R}$
$$
\| f \|_{L^p_s} =
\Big(
\int_{{\mathbf R}^n}
\big(  \langle x \rangle^s  |f(x)|  \big)^p dx
\Big)^{\frac{1}{p}},
$$
and
$ \| f \|_{L^\infty_s} =
{\rm ess.sup}_{x \in {\mathbf R}^n} \langle x \rangle^s   |f(x)|$.
We simply write $L^p ({\mathbf R}^n)$
instead of $L^p_0({\mathbf R}^n)$.
For $1 \leq p < \infty$, we denote by
$p^\prime$ the conjugate exponent of $p$,
i.e., $\frac{1}{p} + \frac{1}{p^\prime} =1$.
We write $C_c^\infty({\mathbf R}^n)$ to denote the set
of all complex-valued infinitely differentiable functions on ${\mathbf R}^n$ with compact support.
We write ${\mathcal S}({\mathbf R}^n)$ to denote the Schwartz space of
all complex-valued rapidly decreasing infinitely differentiable functions on ${\mathbf R}^n$
and ${\mathcal S}^\prime({\mathbf R}^n)$ to denote the space of tempered
distributions on ${\mathbf R}^n$.
We use $\langle F, G \rangle$ to denote the extension
of the inner product
$\langle F,G \rangle =
 \int  F(t) \overline{G(t)} dt$ on $L^2$
to
${\mathcal S}^\prime \times {\mathcal S}$
or $M^{p,q}_s \times M^{p^\prime, q^\prime}_{-s} $
(see Lemma \ref{basic pro mod} (ii) below).
The {\it Fourier transform} of $f \in L^1({\mathbf R}^n) $ is defined by
$$
{\mathcal F} f (\xi) = f^\wedge(\xi) = \widehat{f} (\xi)  = \int_{{\mathbf R}^n}  f(x) e^{-ix \xi} dx.
$$
Similarly,  if $h \in L^1({\mathbf R}^n) $ then the inverse Fourier transform of $h$ is defined by  
${\mathcal F}^{-1} h(x) 
= (2 \pi)^{-n}\widehat{h} (-x)$.
We note that
$\langle f, g \rangle_{ L^2({\mathbf R}^n) }
= (2 \pi)^{-n} \langle \widehat{f}, \widehat{g} \rangle_{L^2({\mathbf R}^n)} $,
$(f*g)^\wedge = \widehat{f}  \widehat{g}$ and
$(fg)^\wedge = (2 \pi)^{-n} ( \widehat{f} * \widehat{g}  )$,
where
$(f*g) (x) = \int_{{\mathbf R}^n} f(x-y) g(y) dy $.
Moreover,
$M({\mathbf R}^n)$ denotes  the sets of
all  bounded regular Borel measures $\mu$ on ${\mathbf R}^n$ with
the norm
$$
\| \mu  \|_{ M({\mathbf R}^n)}
= \sup_{f \in C_c ({\mathbf R}^n), \| f \|_{L^\infty} \leq 1  }
\Big|  \int_{{\mathbf R}^n} f(x) d \mu(x) \Big|.
$$
The Fourier-Stieltjes transform of $\mu$ is defined by
$\widehat{\mu} (\xi) = \int_{{\mathbf R}^n} e^{-ix \xi} d \mu(x)$.
Here $C_c({\mathbf R}^n)$ denotes the set of all $f \in C({\mathbf R}^n)$
with compact support.
Let  $\delta_a$ $(a \in {\mathbf R}^n)$ be the unit mass concentrated at the
point $x=a$, i.e.,  $\delta_a(E) =1$ if $a \in E$ and $\delta_a(E)=0$ otherwise.
Then we have
$\delta_a \in M({\mathbf R}^n)$,
 $\| \delta_a \|_{M({\mathbf R}^n)}=1$, $\widehat{\delta_a} (\xi) = e^{-ia \xi}$
and
$\delta_a * \delta_b = \delta_{a+b}$  $(a,b \in {\mathbf R}^n)$
(see \cite[Ch.1.3]{Rudin} for more details).
For two Banach spaces $B_1$ and $B_2$,
$B_1 \hookrightarrow B_2$ means that
$B_1$ is continuously embedded into $B_2$.

\subsection{Short-time Fourier transform}
\label{STFT}
For $f \in {\mathcal S}^\prime ({\mathbf R}^n)$
and $\phi \in {\mathcal S} ({\mathbf R}^n)$.
the short-time Fourier transform
$V_\phi$ of $f$ with respect to the window
$\phi$ is defined by the duality expression
$$
V_\phi f (x, \xi)
=
\langle f, M_\xi T_x \phi \rangle
=
\langle f(t), \phi(t-x) e^{it \xi}  \rangle
=
\int_{{\mathbf R}^n}
f(t)  \overline{\phi(t-x)} e^{-i t \xi} dt,
$$
where the translation operator $T_x$ and the modulation operator $M_\xi$
are defined by $(T_x h)(t) = h(t-x)$ and $(M_\xi h)(t)= e^{it \xi} h(t)$, respectively.

It is known that
$V_\phi f  \in C( {\mathbf R}^{n} \times {\mathbf R}^n  )$
(see \cite[Lemma 11.2.3]{Grochenig 2001})
and
\begin{align}
\label{basic STFT}
V_\phi f (x, \xi)
&=
(2 \pi)^{-n}  e^{- i x \xi}
V_{ \widehat{\phi}  } \widehat{f} (\xi, -x)
=
(2 \pi)^{-n} e^{-i x \xi} (f* M_\xi \phi^*) (x),
\end{align}
where $\phi^* (x) = \overline{\phi(-x)}$
(see \cite[Lemma 3.1.1]{Grochenig 2001}).
We also note that
if $f \in {\mathcal S}  ({\mathbf R}^n)$,
then $V_\phi f \in {\mathcal S}({\mathbf R}^n \times {\mathbf R}^n)$
(see \cite[Theorem  11.2.5]{Grochenig 2001}).
Moreover, we have Moyals Equation:
$$
\langle V_\phi f, V_\psi g \rangle_{L^2({\mathbf R}^{2n})}
=(2 \pi)^n \langle \psi, \phi \rangle_{L^2({\mathbf R}^n)}
\langle f, g \rangle_{L^2({\mathbf R}^n)}
$$
for all $f,g , \phi, \psi \in L^2({\mathbf R}^n)$
(see \cite[Theorem  3.2.1]{Grochenig 2001}).

\subsection{Modulation spaces}
\label{sec def of modulation}

\begin{definition}[{\cite{Feichtinger}}]
\label{def of modulation spaces}
Let $1 \leq  p,q \leq \infty$, $s \in {\mathbf R}$ and $\phi \in {\mathcal S} ({\mathbf R}^n) \setminus  \{ 0 \}$.
The modulation space $M_s^{p,q}({\mathbf R}^n) = M_s^{p,q}$
consists of all $ f \in {\mathcal S}^\prime({\mathbf R}^n)$ such that the norm
$$
\| f \|_{M_s^{p,q}} = \Big(
\int_{{\mathbf R}^n} \langle \xi  \rangle^{sq}
\Big(
\int_{{\mathbf R}^n} | V_\phi f ( x, \xi) |^p dx \Big)^{\frac{q}{p}}
d \xi \Big)^{\frac{1}{q}}
$$
is finite (with usual modification if $p=\infty$ or $q=\infty$).
We simply write $M^{p,q} ({\mathbf R}^n) $
instead of  $M^{p,q}_0 ({\mathbf R}^n) $.
\end{definition}

We collect basic properties of $M^{p,q}_s ({\mathbf R}^n)$
in the following lemma:
\begin{lemma}
\label{basic pro mod}
Let $1 \leq p,p_1,p_2, q,q_1,q_2 \leq \infty$ and $s,s_1,s_2 \in {\mathbf R}$.
Then
\begin{itemize}

\item[$(i)$] The space $M^{p,q}_s ({\mathbf R}^n)$
is a Banach space, and different windows define equivalent norms.

\item[$(ii)$] $($Density and duality$)$
If  $p,q< \infty$, then ${\mathcal S} ({\mathbf R}^n)$
is dense in $M^{p,q}_s ({\mathbf R}^n)$ and
$(M^{p,q}_s ({\mathbf R}^n) )^\prime = M^{p^\prime,q^\prime}_{-s} ({\mathbf R}^n)$.

\item[$(iii)$]
If  $p_1 \leq p_2$, $q_1 \leq q_2$ and
$s_1 \geq s_2$, then $M^{p_1,q_1}_{s_1} ({\mathbf R}^n) \hookrightarrow
M^{p_2,q_2}_{s_2} ({\mathbf R}^n)$.

\item[$(iv)$]
If $s> \frac{n}{q^\prime}$, or $q=1$ and $s \geq 0$, then
$M^{p,q}_s ({\mathbf R}^n)  \subset C ({\mathbf R}^n)$
and
 $M^{p,q}_s ({\mathbf R}^n)$
are multiplication algebra, i.e., we have
\begin{align*}
\label{multiplication const}
\| fg \|_{M^{p,q}_s}
\leq c \| f \|_{M^{p,q}_s} \| g \|_{M^{p,q}_s},
\quad
f,g \in M^{p,q}_s  ({\mathbf R}^n)
\end{align*}
for some $c \geq 1$.

\item[$(v)$]
If $s \geq 0$, then  there exists $C >0$ such that
$$
\| f_\lambda \|_{M^{\infty,1}_s}
\leq C \| f \|_{M^{\infty,1}_s} \quad \forall
f \in M^{\infty,1}_s ({\mathbf R}^n)
$$
for all  $0< \lambda \leq 1$, where $f_\lambda (x) = f(\lambda x)$.
\end{itemize}
\end{lemma}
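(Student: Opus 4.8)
Items $(i)$--$(iv)$ are standard in the theory of modulation spaces: $(i)$ and $(ii)$ may be found in \cite{Grochenig 2001}, $(iii)$ is the elementary monotonicity of the weighted mixed norm defining $M^{p,q}_s$, and the multiplication-algebra statement in $(iv)$ is \cite[Theorem 10]{fe90} (together with the standard inclusions). Only $(v)$, the boundedness of contractive dilations on $M^{\infty,1}_s$, really requires an argument, and I shall concentrate on it.

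My plan for $(v)$ is to use the equivalent ``frequency-uniform'' description of $M^{\infty,1}_s$ (see \cite{Wang Huo Hao Guo}): fix a smooth partition of unity $\{\sigma_k\}_{k\in{\mathbf Z}^n}$ with $\sigma_k=\sigma(\,\cdot-k)$, $\operatorname{supp}\sigma\subset Q_0$ a fixed cube centred at $0$, and $\sum_k\sigma_k\equiv1$; writing $\Box_k:=\sigma_k(D)$ for the associated Fourier multiplier, one has $\|g\|_{M^{\infty,1}_s}\approx\sum_{k\in{\mathbf Z}^n}\langle k\rangle^s\|\Box_k g\|_{L^\infty}$, and $\|\Box_k g\|_{L^\infty}\le\|\mathcal F^{-1}\sigma\|_{L^1}\|g\|_{L^\infty}$ uniformly in $k$. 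The key observation is that $\widehat{(\Box_j f)_\lambda}=\lambda^{-n}\widehat{\Box_j f}(\,\cdot/\lambda)$ is supported in $\lambda(j+Q_0)\subseteq\lambda j+Q_0$, the inclusion using $0<\lambda\le1$; hence $\Box_k\big((\Box_j f)_\lambda\big)\equiv0$ unless $j\in J_k:=\{j\in{\mathbf Z}^n:|k-\lambda j|\le c_0\}$, where $c_0$ depends only on $Q_0$. Writing $f=\sum_j\Box_j f$ (a series converging absolutely in $M^{\infty,1}_s$, hence in $\mathcal S'$), and applying the dilation and then $\Box_k$, both continuous on $\mathcal S'$, one gets $\Box_k f_\lambda=\sum_{j\in J_k}\Box_k\big((\Box_j f)_\lambda\big)$, a finite sum.

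The estimate then runs as follows. For each $k$,
$$\|\Box_k f_\lambda\|_{L^\infty}\le\sum_{j\in J_k}\|\Box_k((\Box_j f)_\lambda)\|_{L^\infty}\le\|\mathcal F^{-1}\sigma\|_{L^1}\sum_{j\in J_k}\|(\Box_j f)_\lambda\|_{L^\infty}=\|\mathcal F^{-1}\sigma\|_{L^1}\sum_{j\in J_k}\|\Box_j f\|_{L^\infty},$$
the last equality being the trivial fact that dilation is an isometry of $L^\infty$. Multiplying by $\langle k\rangle^s$, summing over $k$, and interchanging the two nonnegative sums yields
$$\|f_\lambda\|_{M^{\infty,1}_s}\lesssim\sum_{j\in{\mathbf Z}^n}\|\Box_j f\|_{L^\infty}\sum_{\{k\,:\,|k-\lambda j|\le c_0\}}\langle k\rangle^s.$$
For each fixed $j$ the inner sum is over at most $C_n$ lattice points, a number independent of $\lambda$ and $j$, and every such $k$ satisfies $\langle k\rangle\le C_n\langle\lambda j\rangle\le C_n\langle j\rangle$ since $\lambda\le1$; it is here that $s\ge0$ enters, turning this into $\langle k\rangle^s\le C_n^s\langle j\rangle^s$. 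Thus the inner sum is $\lesssim\langle j\rangle^s$ uniformly in $\lambda$, and $\|f_\lambda\|_{M^{\infty,1}_s}\lesssim\sum_j\langle j\rangle^s\|\Box_j f\|_{L^\infty}\approx\|f\|_{M^{\infty,1}_s}$ with a constant independent of $\lambda\in(0,1]$, which is $(v)$.

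The point that forces this strategy is the uniformity as $\lambda\to0$. A direct short-time-Fourier-transform computation only gives $\|f_\lambda\|_{M^{\infty,1}_s}\le(2\pi)^{-n}\|f\|_{M^{\infty,1}_s,\,\phi(\cdot/\lambda)}$, and a change of window then costs a factor $\|\phi(\cdot/\lambda)\|_{M^{1,1}_s}$, which grows like $\lambda^{-s}$ --- finite for each $\lambda>0$ but not bounded. The frequency-uniform decomposition avoids this because the weight $\langle k\rangle^s$ stays attached to unit-size frequency blocks: a contractive dilation makes each block $j$ spread over the $O(\lambda^{-n})$ blocks $k$ near $\lambda j$, but summing in $k$ over a \emph{bounded} set of neighbours of each fixed $j$ absorbs this harmlessly. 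The only secondary issue is the justification of the identities $f_\lambda=\sum_j(\Box_j f)_\lambda$ and $\Box_k f_\lambda=\sum_{j\in J_k}\Box_k((\Box_j f)_\lambda)$, which I would deduce from absolute convergence of $\sum_j\Box_j f$ in $M^{\infty,1}_s$ together with continuity of dilation and of $\Box_k$ on $\mathcal S'$; note that one cannot instead reduce to $f\in\mathcal S$ by density, since $\mathcal S$ is not dense in $M^{\infty,1}_s$.
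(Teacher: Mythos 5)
Your proposal is correct. Note first that the paper does not actually prove this lemma: it simply records that items $(i)$--$(v)$ are announced in \cite{Cordero Okoudjou}, \cite{Feichtinger}, \cite{Grobner}, \cite{Toft 2004}, \cite{Toft 2004-2} and surveyed in \cite{Benyi-Okoudjou}, \cite{Cordero Rodino}, \cite{Grochenig 2001}; in particular the dilation estimate $(v)$ is taken from \cite[Theorem~3.2]{Cordero Okoudjou}. Your treatment of $(i)$--$(iv)$ by citation therefore matches the paper exactly, and your self-contained argument for $(v)$ is a correct reconstruction of the standard proof underlying that reference: the frequency-uniform decomposition, the support observation $\lambda(j+Q_0)\subseteq\lambda j+Q_0$ for $0<\lambda\le 1$, and above all the interchange of the two sums are exactly the right mechanism. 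The crucial point --- that although $J_k$ has $O(\lambda^{-n})$ elements for fixed $k$, after swapping the order of summation each fixed $j$ meets only a bounded number of $k$'s with $|k-\lambda j|\le c_0$, each satisfying $\langle k\rangle^s\lesssim\langle j\rangle^s$ because $s\ge 0$ and $\lambda\le 1$ --- is identified and used correctly, and your remarks on why a naive window-rescaling argument loses a factor $\lambda^{-s}$ and why one cannot reduce to $f\in\mathcal S$ by density are both accurate. The only step left slightly informal is the identity $\Box_k f_\lambda=\sum_{j\in J_k}\Box_k((\Box_j f)_\lambda)$, but your justification (absolute convergence of $\sum_j\Box_j f$ in $L^\infty\hookrightarrow\mathcal S'$, continuity of dilation and of $\Box_k$ on $\mathcal S'$) is the standard one and closes the gap.
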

$(i)$-$(v)$ are announced in
\cite[Theorem 3.2]{Cordero Okoudjou},
\cite[Section 6]{Feichtinger},
\cite{Grobner},
\cite{Toft 2004},
\cite{Toft 2004-2}
and summaries are given in \cite{Benyi-Okoudjou},
\cite{Cordero Rodino}, \cite[Chapter 11]{Grochenig 2001}.

We also note that there is another characterization of
the modulation spaces using BUPUs on the Fourier
transform side (see \cite{Feichtinger 83},\cite{fe90} or
\cite{Wang Huo Hao Guo}):
Assume that $\varphi \in {\mathcal S} ({\mathbf R}^n)$
satisfies
$$
{\rm supp~} \varphi \subset [-1,1]^n
\quad
{\rm and} \quad
\sum_{k \in {\mathbf Z}^n}
\varphi (\xi -k) =1 \quad
{\rm for~all ~}
\xi \in {\mathbf R}^n.
$$
Then we have, writing
$\varphi(D-k)f  =
{\mathcal F}^{-1} (T_k \varphi \cdot \widehat{f} )  $:
$$
\| f \|_{M^{p,q}_s}
\approx
\Big(
\sum_{k \in {\mathbf Z}^n}
\langle k \rangle^{sq}
\Big(
\int_{{\mathbf R}^n}
| \varphi(D-k) f (x) |^p dx
\Big)^{ \frac{q}{p} }
\Big)^{\frac{1}{q}}
$$
with obvious modifications if $p$ or $q= \infty$.

\begin{remark}
\label{equivalent norm of Mpqs}
We note that 
we may assume that
$0 \leq \varphi(\xi) \leq 1$ $(\xi \in {\mathbf R}^n)$
and  $\varphi (\xi)  =1$
on the box $[- \frac{1}{10} , \frac{1}{10}   ]^n$.
\end{remark}

\begin{remark}
Let $1 \leq q \leq \infty$, $s \in {\mathbf  R}$ and
$(B, \| \cdot \|_B )$ be a Banach space of tempered distributions
on ${\mathbf  R}^n$ such that $ {\mathcal S} \cdot B \subset B$.
Using BUPUs the {\it Wiener amalgam space}
$W( B,\ell^q_s)({\mathbf  R}^n )$ is defined by the norm
$$
\| f \|_{W (B,\ell^q_s)}
=
\Big(
\sum_{k \in {\mathbf Z}^n}
\langle k \rangle^{sq}
\|  f \cdot T_k  \varphi \|_{B}^q
\Big)^{\frac{1}{q}} < \infty.
$$
We note that
 $W ({\mathcal F} L^p,\ell^q_s)({\mathbf  R}^n )
= {\mathcal F}^{-1} (M^{p,q}_s ({\mathbf R}^n))$,
where ${\mathcal F} L^p ({\mathbf R}^n) (= {\mathcal F} L^p)$ denotes the
Fourier-Lebesgue space with the norm
$\| f \|_{{\mathcal F}L^p} = \| \widehat{f} \|_{L^p}$
(see also Section \ref{subsection Fourier-Beurling} below).
\end{remark}

The following lemma seems to be known to many people,
but for the reader's convenience, we  give the
proof (cf. \cite{fe90}, {\cite{Guo Fan Dashan Wu Zhao}} or \cite{Teofanov Toft}).
\begin{lemma}
\label{product estimate}
Let $1 \leq p,q  < \infty $.
Suppose that
 $s > {n}/{q^\prime}$,
or  $q=1$ and $s \geq 0$.
Then we have
$$
\| fg \|_{M^{p,q}_s}
\lesssim \| f \|_{M^{\infty,q}_s}
\| g \|_{M^{p,q}_s}, \quad f,g \in {\mathcal S}({\mathbf R}^n).
$$
If
$f \in M^{p,1}_s ({\mathbf R}^n)$ and $g \in M^{p^\prime, \infty}_{-s} ({\mathbf R}^n)$,
then $f g \in
M^{1, \infty}_{-s} ({\mathbf R}^n)
(\hookrightarrow
M^{p^\prime, \infty}_{-s} ({\mathbf R}^n) )$.
\end{lemma}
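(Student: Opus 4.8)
The plan is to reduce both assertions to convolution estimates on the Fourier side, using the well-known identity $W(\mathcal FL^p,\ell^q_s)=\mathcal F^{-1}(M^{p,q}_s)$ together with the relation $(fg)^\wedge=(2\pi)^{-n}(\widehat f*\widehat g)$. Equivalently, one can work directly with the BUPU characterization of $\|\cdot\|_{M^{p,q}_s}$ from the excerpt: writing $F=\widehat f$, $G=\widehat g$, it suffices to estimate $\|F*G\|$ in the appropriate Wiener amalgam norm. The key is the standard convolution relation for amalgam spaces, $W(\mathcal FL^\infty,\ell^q_s)*W(\mathcal FL^p,\ell^1_s)\hookrightarrow W(\mathcal FL^p,\ell^q_s)$ when the weight $\langle\cdot\rangle^s$ is submultiplicative, i.e.\ $\langle k+l\rangle^s\lesssim\langle k\rangle^s\langle l\rangle^s$, which holds for $s\ge0$; the $\ell^q_s*\ell^1_s\hookrightarrow\ell^q_s$ step is Young's inequality for weighted sequence spaces, and locally one uses $L^p*L^\infty\subset L^\infty$ on each cube (after adjusting for the overlap of the BUPU).

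The first step is to set up the discretization: fix the BUPU $\{T_k\varphi\}_{k\in\mathbf Z^n}$ from Remark \ref{equivalent norm of Mpqs} and observe that, since $\sum_k T_k\varphi\equiv1$ and each $T_k\varphi$ has support in $k+[-1,1]^n$, the product $fg$ localized to a cube $k+[-1,1]^n$ only interacts with finitely many (a bounded number of) translates. Concretely, for the first estimate I would expand $\varphi(D-k)(fg)$ and bound its $L^p$ norm by a sum over $l$ of $\|\varphi(D-l)f\cdot(\text{something})(D-(k-l))g\|_{L^p}$, using Hölder with $L^\infty\cdot L^p$. Summing in $k$ and applying the weighted Young inequality (with the submultiplicativity of $\langle\cdot\rangle^s$) then gives $\|fg\|_{M^{p,q}_s}\lesssim\|f\|_{M^{\infty,q}_s}\|g\|_{M^{p,1}_s}$; but when $q=1$, or more generally under the hypothesis $s>n/q'$, one has $M^{p,q}_s\hookrightarrow M^{p,1}_s$ is false in general, so instead I would note that the hypothesis $s>n/q'$ (resp.\ $q=1$, $s\ge0$) is exactly what makes $\ell^q_s\hookrightarrow\ell^1$ after factoring out one copy of the weight — that is, $\sum_k\langle k\rangle^{-s q'}<\infty$ — allowing the second factor's $\ell^q_s$-norm to be absorbed into an $\ell^1_s$-type bound via Hölder in the $k$-sum. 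This is the standard mechanism by which $M^{p,q}_s$ becomes an algebra under these conditions.

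For the second assertion, the argument is dual and softer: given $f\in M^{p,1}_s$ and $g\in M^{p',\infty}_{-s}$, I would again localize and estimate $\|\varphi(D-k)(fg)\|_{L^1}\lesssim\sum_l\|\varphi(D-l)f\|_{L^p}\|(\text{something})(D-(k-l))g\|_{L^{p'}}$ by Hölder's inequality $L^1\supset L^p\cdot L^{p'}$. Taking the supremum over $k$ of $\langle k\rangle^{-s}$ times this quantity, and using $\langle k\rangle^{-s}\le\langle l\rangle^{-s}\langle k-l\rangle^{s}$ wait --- more carefully, $\langle k\rangle^{-s}\lesssim\langle l\rangle^{s}\langle k-l\rangle^{-s}$ is false; rather one writes $k=l+(k-l)$ and uses $\langle l\rangle^{s}\langle k-l\rangle^{-s}\gtrsim\langle k\rangle^{-s}$ only when... so instead I distribute as $\langle k\rangle^{-s}\le C\langle l\rangle^{-s}\langle k-l\rangle^{-s}\langle\min(|l|,|k-l|)\rangle^{2s}$, which is not clean; the honest route is: bound $\sup_k\langle k\rangle^{-s}\sum_l\|\varphi(D-l)f\|_{L^p}\|\psi(D-(k-l))g\|_{L^{p'}}$ by $\big(\sum_l\langle l\rangle^s\|\varphi(D-l)f\|_{L^p}\big)\cdot\sup_m\langle m\rangle^{-s}\|\psi(D-m)g\|_{L^{p'}}$ using $\langle k\rangle^{-s}\le\langle l\rangle^{s}\langle k-l\rangle^{-s}$ which \emph{does} hold for $s\ge0$ since $\langle k\rangle\le\langle l\rangle\langle k-l\rangle$. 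That yields $\|fg\|_{M^{1,\infty}_{-s}}\lesssim\|f\|_{M^{p,1}_s}\|g\|_{M^{p',\infty}_{-s}}$, and the embedding $M^{1,\infty}_{-s}\hookrightarrow M^{p',\infty}_{-s}$ is Lemma \ref{basic pro mod}(iii).

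The main obstacle, as the scratch-work above signals, is bookkeeping the weights and the finite overlap of the BUPU correctly: one must replace $T_k\varphi$ on the Fourier side by a slightly fattened bump $\widetilde\varphi$ with $\widetilde\varphi\equiv1$ on $\mathrm{supp}\,\varphi$ so that $\varphi(D-k)f=\varphi(D-k)\widetilde\varphi(D-k)f$, and likewise handle the convolution $\widehat f*\widehat g$ by noting $T_l\varphi\cdot(T_m\varphi*\cdot)$ vanishes unless $|k-l-m|\le C$. Once the convolution structure is pinned down, the first estimate is Young's inequality for $\ell^1_s*\ell^q_s\hookrightarrow\ell^q_s$ (valid for $s\ge0$), combined with the summability $\sum\langle k\rangle^{-sq'}<\infty$ that the hypothesis $s>n/q'$ guarantees (in the case $q=1$ no summability is needed, just $s\ge0$), and the second estimate is the elementary $\ell^1_s\cdot\ell^\infty_{-s}$ pairing above. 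The density of $\mathcal S$ and the fact that all the quantities appearing are finite for Schwartz functions let us carry out the manipulations classically and then extend by continuity where needed.
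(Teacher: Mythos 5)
Your proposal is correct and follows essentially the same route as the paper's proof: both transfer the pointwise product to a convolution estimate on the Fourier side in the Wiener amalgam spaces $W({\mathcal F}L^p,\ell^q_s)$ and split it into a local H\"older inclusion ($L^\infty\cdot L^p\subseteq L^p$, resp.\ $L^p\cdot L^{p^\prime}\subseteq L^1$) plus a global weighted discrete convolution inequality ($\ell^q_s({\mathbf Z}^n)$ is a convolution algebra when $s>n/q^\prime$, or $q=1$ and $s\ge 0$, and $\ell^1_s*\ell^\infty_{-s}\subseteq\ell^\infty_{-s}$ by $\langle k\rangle^{-s}\le\langle l\rangle^{s}\langle k-l\rangle^{-s}$), the only real difference being that you rebuild the amalgam convolution relation by hand from the BUPU where the paper cites \cite{Feichtinger 83-2}. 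The one detail worth tightening is that for $s>n/q^\prime$ the embedding you actually need is $\ell^q_s\hookrightarrow\ell^1$ (unweighted), to be combined with $\langle k\rangle^s\lesssim\langle l\rangle^s+\langle k-l\rangle^s$ so that Young's inequality can be applied to each summand separately, rather than an ``$\ell^1_s$-type'' absorption, since $\ell^q_s\not\hookrightarrow\ell^1_s$ in general.
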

\begin{proof}
We start the proof by observing that the pointwise relationships
for modulation spaces correspond to convolution relations for
their (inverse) Fourier transforms
in $W ({\mathcal F} L^p,\ell^q_s)({\mathbf  R}^n )$.

The claim made is thus equivalent to the statement that (writing
$f,g$ for $\widehat{f}$, $\widehat{g}$ respectively):
\begin{equation}\label{WAMSconv}
   \| f \ast g \|_{W ({\mathcal F} L^p,\ell^q_s)} \lesssim
    \| f \|_{W ({\mathcal F} L^\infty,\ell^q_s)}   \| g \|_{W ({\mathcal F} L^p,\ell^q_s)}
\end{equation}
which results by the coordinate-wise convolution result established in
\cite{Feichtinger 83-2} and the fact that
$$   {\mathcal F}L^p \ast  {\mathcal F} L^\infty
\subset {\mathcal F} L^p \quad\mbox{because} \quad
      L^p \cdot L^\infty \subseteq L^p,      $$
as well as the fact that $(\ell^q_s ({\mathbf Z}^n) , \| \cdot \|_{\ell^q_s })$
is a Banach algebra with respect to the natural convolution, because the weight
function $x \mapsto \langle  x  \rangle^s$ is weakly subadditive for any
$s \geq 0$ and consequently $\ell^1 ({\mathbf Z}^n) \cap \ell^q_s ({\mathbf Z}^n)$ is a Banach
algebra with respect to convolution. This observation have been
made explicit in \cite{Brandenburg}. But for $s > \frac{n}{q^\prime}$, it follows
from the H\"older inequality that
$\ell^q_s  ({\mathbf Z}^n) \hookrightarrow  \ell^1 ({\mathbf Z}^n)$ (with corresponding
continuous embedding), and thus
 $(\ell^q_s ({\mathbf Z}^n) , \| \cdot \|_{\ell^q_s })$ is Banach algebra
with respect to (discrete) convolution.

In a similar way we use the local convolution relation
$$
  {\mathcal F}L^p \ast {\mathcal F}L^{p^\prime} \subset  {\mathcal F}L^\infty
\quad \mbox{due to the relation}
  \quad    L^p \cdot L^{p^\prime}  \subseteq L^\infty,
$$
combined with the global condition
$$
 \ell^p_{s} \ast \ell^{\, p'}_{-s} \subseteq \ell^{\infty}_{-s}
$$
using the submultiplicativity condition (writing $x = (x-y)+y$):
$$  \langle {x+y}  \rangle^{-s}  \leq \langle  x \rangle^{-s}  \langle y \rangle^s. $$
\end{proof}

Let us also observe another consequence of the reasoning used in the
last proof, which allows to provide some insight into the pointwise
multiplier algebra of Sobolev algebras. We formulate it as another
lemma. Let us shortly recall that the classical Sobolev spaces
$(H^s({\mathbf R}^n), \| \cdot \|_{H^s} )$ are just inverse images of (polynomially) weighted
$L^2$-spaces,
and thus appear in the family of modulation spaces as
$$ H^s =  {\mathcal F}^{-1} (L^2_s )
=  {\mathcal F}^{-1} (W(  {\mathcal F}^{-1} (L^2), \ell^2_s  ))
= {\mathcal F}^{-1} (W(   L^2, \ell^2_s  ))
 $$
so that one has $\ell^2_s ({\mathbf Z}^n) \hookrightarrow \ell^1 ({\mathbf Z}^n) $ for $s > \frac{n}{2}$ via
the Cauchy-Schwarz inequality and consequently by the Hausdorff-Young
version for Wiener amalgams (see \cite{fe81-1})  one has the following chain
of continuous embeddings:
 $$H^s  \hookrightarrow   {\mathcal F}^{-1}
(W(L^2, \ell^1)) \hookrightarrow W( {\mathcal F} L^1, \ell^2 )
\hookrightarrow W(C_0, \ell^2).
$$
\begin{lemma} \label{SobPMult}
For $s > {n}/{2}$ we have  $M^{\infty,2}_s \cdot H^s \subseteq H^s$, meaning
that $M^{\infty,2}_s({\mathbf R}^n)$ is a subset of  the pointwise multiplier algebra of the Sobolev algebra $H^s ({\mathbf R}^n)$.
\end{lemma}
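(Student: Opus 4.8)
The plan is to re-run the argument behind Lemma~\ref{product estimate} in the endpoint case $p=q=2$, transported to the Fourier transform side. First I would record the identification $M^{2,2}_s({\mathbf R}^n) = H^s({\mathbf R}^n)$ with equivalent norms, which is immediate from the BUPU characterization: $\|f\|_{M^{2,2}_s}^2 \approx \sum_{k \in {\mathbf Z}^n} \langle k \rangle^{2s}\|T_k\varphi\cdot\widehat f\|_{L^2}^2 \approx \int_{{\mathbf R}^n}\langle\xi\rangle^{2s}|\widehat f(\xi)|^2\,d\xi$, i.e. $\mathcal F^{-1}(M^{2,2}_s) = W(\mathcal F L^2,\ell^2_s) = W(L^2,\ell^2_s) = L^2_s$, which is also the description of $\mathcal F^{-1}(H^s)$. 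I would also note that for $f \in M^{\infty,2}_s$ and $g \in H^s$ the product $fg$ is a well-defined tempered distribution: since $s > n/2$ we have $\ell^2_s({\mathbf Z}^n)\hookrightarrow\ell^1({\mathbf Z}^n)$, hence $M^{\infty,2}_s \hookrightarrow M^{\infty,1}_0 \hookrightarrow L^\infty\cap C$, while $g \in H^s \subset L^2$, so $fg \in L^2({\mathbf R}^n)$.

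Using $(fg)^\wedge = (2\pi)^{-n}(\widehat f\ast\widehat g)$, the inclusion $M^{\infty,2}_s\cdot H^s\subseteq H^s$ with norm control becomes equivalent to the convolution relation
$$
W(\mathcal F L^\infty,\ell^2_s)({\mathbf R}^n)\ast W(\mathcal F L^2,\ell^2_s)({\mathbf R}^n)\hookrightarrow W(\mathcal F L^2,\ell^2_s)({\mathbf R}^n).
$$
I would deduce this from the coordinate-wise convolution theorem for Wiener amalgam spaces of \cite{Feichtinger 83-2} --- exactly the tool used for Lemma~\ref{product estimate} --- which reduces the claim to a local and a global ingredient. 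The local ingredient is $\mathcal F L^\infty\ast\mathcal F L^2\subseteq\mathcal F L^2$, valid because $L^\infty\cdot L^2\subseteq L^2$ by Hölder's inequality. The global ingredient is that $(\ell^2_s({\mathbf Z}^n),\|\cdot\|_{\ell^2_s})$ is a Banach algebra under convolution; this is the case $q=2$ of the computation already made in the proof of Lemma~\ref{product estimate}: weak subadditivity of $x\mapsto\langle x\rangle^s$ ($s\ge 0$) makes $\ell^1({\mathbf Z}^n)\cap\ell^2_s({\mathbf Z}^n)$ a convolution Banach algebra (cf.\ \cite{Brandenburg}), and for $s>n/2$ one has $\ell^2_s({\mathbf Z}^n)\hookrightarrow\ell^1({\mathbf Z}^n)$ by Hölder, whence $\ell^1\cap\ell^2_s=\ell^2_s$. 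Feeding these into the amalgam convolution theorem gives $W(\mathcal F L^\infty,\ell^2_s)\ast W(\mathcal F L^2,\ell^2_s)\hookrightarrow W(\mathcal F L^2,\ell^2_s\ast\ell^2_s)\hookrightarrow W(\mathcal F L^2,\ell^2_s)$, and translating back through $\mathcal F$ yields $\|fg\|_{H^s}\lesssim\|f\|_{M^{\infty,2}_s}\|g\|_{H^s}$.

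The point to watch --- and the reason it is worth isolating this as a separate lemma rather than quoting Lemma~\ref{product estimate} --- is that $\mathcal S({\mathbf R}^n)$ is \emph{not} dense in $M^{\infty,2}_s({\mathbf R}^n)$, the first index being infinite, so the estimate cannot be propagated from Schwartz functions by a routine density argument. Working throughout on the Wiener amalgam side avoids this: the displayed convolution inclusion is a genuine statement about all elements of the spaces and comes with its own norm bound, so no approximation step is needed. I expect no further obstacle; the only mild bookkeeping is to check that the a priori meaning of $fg$ (as the $L^2$-function above) agrees with the object $\mathcal F^{-1}\big((2\pi)^{-n}\widehat f\ast\widehat g\big)$ produced on the Fourier side, which follows by continuity from the Schwartz case.
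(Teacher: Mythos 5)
Your proposal is correct and follows essentially the same route as the paper: both identify $H^s$ with $\mathcal{F}^{-1}(W(L^2,\ell^2_s))$ and reduce the claim to the amalgam convolution relation $W(\mathcal{F}L^\infty,\ell^2_s)\ast W(\mathcal{F}L^2,\ell^2_s)\subseteq W(L^2,\ell^2_s)$ from \cite{Feichtinger 83-2}, with the local ingredient $L^\infty\cdot L^2\subseteq L^2$ and the global ingredient that $\ell^2_s(\mathbf{Z}^n)=\ell^2_s\cap\ell^1$ is a convolution Banach algebra for $s>n/2$. Your additional remarks on the a priori meaning of $fg$ and the failure of Schwartz density in $M^{\infty,2}_s$ are sensible elaborations but do not change the argument.
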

\begin{proof}
We have already noted that $s > {n}/{2}$  implies  $H^s  ({\mathbf R}^n)
\hookrightarrow W(C_0, \ell^2) ({\mathbf R}^n)$.
The following reasoning implies the classical fact that it is a
Banach algebra with respect to pointwise multiplication (denoted
as Sobolev algebra).

We have to verify that the corresponding convolution relation
is valid on the Fourier transform side.  In fact, we may invoke
the main result of \cite{Feichtinger 83-2}:
\begin{equation}\label{Wtconvrel1}
W ({\mathcal F} L^\infty, \ell^2_s) * W( {\mathcal F} L^2, \ell^2_s)
\subset W(L^2, \ell^2_s),
\end{equation}
using again that $\ell^2_s ({\mathbf Z}^n) = \ell^2_s ({\mathbf Z}^n) \cap \ell^1({\mathbf Z}^n) $
is a Banach algebra with respect to convolution, and
$$   {\mathcal F} L^\infty *  {\mathcal F} L^2
\subseteq  {\mathcal F} L^2 = L^2,
\quad \mbox{due to the fact that} \quad L^\infty \cdot L^2  \subseteq L^2.$$
\end{proof}

In order to make our paper self-contained
let us recall the following
result which makes use of elementary inclusion results for Wiener
amalgams and the
Hausdorff-Young theorem for generalized amalgam spaces as
described in \cite[Theorem 9]{fe90}. It states (in the unweighted
version) that one has
\begin{equation*}\label{HYGenAmalg}
{\mathcal F}
(W({\mathcal F}L^p, \ell^q) )
\hookrightarrow
W({\mathcal F}L^q, \ell^p)
 \quad \mbox{for} \,\, 
  1 \leq q \leq p \leq \infty.
\end{equation*}
In particular it implies the Fourier invariance of the space
$W({\mathcal F}L^p,\ell^p)$, for $1 \leq p \leq \infty$ (the unweighted version
of Theorem 6 of \cite{fe90}). 

\begin{lemma} \label{LocFLp}
For $1 \leq p \leq 2$ we have the chain of continuous embeddings
\begin{equation*}\label{FLpemb}
W({\mathcal F}L^p, \ell^p) \hookrightarrow {\mathcal F}L^p \hookrightarrow 
W({\mathcal F} L^p, \ell^{p^\prime})
\end{equation*}
or by taking inverse Fourier transforms
\begin{equation*}\label{MppP}
M^{p,p} ({\mathbf R}^n) \hookrightarrow
L^p({\mathbf R}^n)
\hookrightarrow
M^{p,p^\prime} ({\mathbf R}^n).
\end{equation*}
In particular, we have
$M^{2,2} ({\mathbf R}^n) = L^2 ({\mathbf R}^n)$.

\end{lemma}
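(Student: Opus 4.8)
The plan is to derive both continuous embeddings from the generalized Hausdorff--Young inequality for Wiener amalgams recalled just above, namely ${\mathcal F}(W({\mathcal F}L^p,\ell^q))\hookrightarrow W({\mathcal F}L^q,\ell^p)$ for $1\le q\le p\le\infty$, in combination with three elementary facts: (a) $W(L^p,\ell^p)=L^p$ with equivalent norms; (b) the monotonicity $B_1\hookrightarrow B_2\Rightarrow W(B_1,\ell^q)\hookrightarrow W(B_2,\ell^q)$; and (c) the \emph{local} Hausdorff--Young embeddings, which for $1\le p\le 2$ yield, uniformly over unit cubes (hence over the translated windows $T_k\varphi$), the inclusions ${\mathcal F}L^p\hookrightarrow L^{p'}\hookrightarrow L^p$ and $L^p\hookrightarrow {\mathcal F}L^{p'}$. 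I would also note at the outset that ${\mathcal F}L^p={\mathcal F}(L^p)$ as a set, since ${\mathcal F}^{-1}$ is ${\mathcal F}$ composed with a reflection and a constant, so that applying ${\mathcal F}$ converts any ${\mathcal F}L^p$-statement into an $L^p$-statement; thus it suffices to establish the Fourier-transformed chain. The modulation-space reformulation then follows at once from the identity $W({\mathcal F}L^p,\ell^q_s)={\mathcal F}^{-1}(M^{p,q}_s)$ of the Remark above.

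For the first embedding, $W({\mathcal F}L^p,\ell^p)\hookrightarrow {\mathcal F}L^p$, applying ${\mathcal F}$ reduces the claim to ${\mathcal F}(W({\mathcal F}L^p,\ell^p))\hookrightarrow L^p$. The generalized Hausdorff--Young with $q=p$ gives ${\mathcal F}(W({\mathcal F}L^p,\ell^p))\hookrightarrow W({\mathcal F}L^p,\ell^p)$, and then the local embedding ${\mathcal F}L^p\hookrightarrow L^p$ from (c) together with (b) and (a) upgrades this to $W(L^p,\ell^p)=L^p$. For the second embedding, ${\mathcal F}L^p\hookrightarrow W({\mathcal F}L^p,\ell^{p'})$, I would start from ${\mathcal F}L^p={\mathcal F}(W(L^p,\ell^p))$; the local embedding $L^p\hookrightarrow {\mathcal F}L^{p'}$ of (c) with (b) gives $W(L^p,\ell^p)\hookrightarrow W({\mathcal F}L^{p'},\ell^p)$, and applying the generalized Hausdorff--Young to the pair $(p',p)$ --- admissible precisely because $p\le p'$, i.e.\ $p\le 2$ --- yields ${\mathcal F}(W({\mathcal F}L^{p'},\ell^p))\hookrightarrow W({\mathcal F}L^p,\ell^{p'})$. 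Taking inverse Fourier transforms rewrites the two embeddings as $M^{p,p}({\mathbf R}^n)\hookrightarrow L^p({\mathbf R}^n)\hookrightarrow M^{p,p'}({\mathbf R}^n)$, and specializing to $p=2$ (so $p'=p=2$) gives the two-sided embedding $M^{2,2}\hookrightarrow L^2\hookrightarrow M^{2,2}$, hence the asserted equality.

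The argument is essentially bookkeeping once the amalgam Hausdorff--Young is granted, so I do not expect a genuine obstacle; the one point that must be watched carefully is the consistent use of the exponent restriction $q\le p$ in every invocation of that inequality and, dually, the correct orientation of each local inclusion among ${\mathcal F}L^p$, $L^p$, ${\mathcal F}L^{p'}$ on bounded sets --- both are valid exactly under the standing hypothesis $1\le p\le 2$, which is where that restriction enters. A minor additional care is to keep track of the equivalent-norm constants in the identifications ${\mathcal F}L^p={\mathcal F}(L^p)$ and $W(L^p,\ell^p)=L^p$ and in the translation-uniformity of (c), but these introduce no new ideas.
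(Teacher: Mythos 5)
Your proposal is correct and follows essentially the same route as the paper: both embeddings are obtained from the classical Hausdorff--Young inequality applied locally (on the windows), the monotonicity of Wiener amalgams in the local component together with $W(L^p,\ell^p)=L^p$, and the generalized amalgam Hausdorff--Young ${\mathcal F}(W({\mathcal F}L^p,\ell^q))\hookrightarrow W({\mathcal F}L^q,\ell^p)$ for $q\le p$ (including its $q=p$ Fourier-invariance case), exactly as in the paper's proof. The only differences are cosmetic reorderings of the same chains of inclusions.
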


\begin{proof}
We start with the observation that the classical Hausdorff-Young
  estimate implies for $1 \leq p \leq 2$, with $1/p'+1/p =1$: 
   \begin{equation*} \label{HausYoung24A}
 {\mathcal F}L^p \hookrightarrow L^{p^\prime}
\quad \mbox{or equivalently} \quad 
L^p \hookrightarrow {\mathcal F}L^{p^\prime}
 \end{equation*}
 and consequently 
    \begin{equation*} \label{HausYoung24E}
W({\mathcal F} L^p, \ell^p)
\hookrightarrow 
W(L^{p^\prime}, \ell^p)
\hookrightarrow W(L^p,\ell^p) =L^p,
 \end{equation*}
 which implies: 
    \begin{equation*} \label{HausYoung24F}
W({\mathcal F}L^p,\ell^p)
={\mathcal F} (W({\mathcal F} L^p,\ell^p ))
\hookrightarrow {\mathcal F}L^p,
\quad \mbox{for} \,\, 1 \leq p \leq 2.
 \end{equation*}
 Thus the first continuous embeddings is verified.
  For the second inclusion recall that 
\cite{fe90}) (see above, due to the obvious fact that  $p \leq 2 \leq p'$) implies 
    \begin{equation*} \label{HausYoung24C}
{\mathcal F}L^p =
{\mathcal F} (W(L^p, \ell^p)) 
\hookrightarrow {\mathcal F} (W({\mathcal F} L^{p^\prime} , \ell^p  )) 
\hookrightarrow_{HY} 
W({\mathcal F} L^p, \ell^{p^\prime}).
 \end{equation*}

\end{proof}

\subsection{Fourier-Beurling algebra}
\label{subsection Fourier-Beurling}
For $s \geq 0$ the Fourier-Beurling algebra
${\mathcal F}L_s^1({\mathbf R}^n) = {\mathcal F}L_s^1$ is
the set of all
$f   \in   {\mathcal S}^\prime ({\mathbf R}^n)$  such that the norm
$$
\|  f  \|_{{\mathcal F}L_s^1}=
\int_{{\mathbf R}^n}
\langle \xi \rangle^s |  \widehat{f} (\xi)|    d \xi
$$
is finite.
It is well-known that ${\mathcal F}L^1_s ({\mathbf R}^n)$ is a multiplication algebra,
because Beurling algebras are Banach convolution
algebras (\cite{Reiter-Stegeman 2000}). 
We also recall the following result
(e.g., {\cite[Proposition 1.6.14]{Reiter-Stegeman 2000}}):
\begin{lemma}
	\label{approximate units in FL1s}
Let $s \geq 0$ and $f \in {\mathcal F}L^1_s ({\mathbf R}^n)  $.
Then for any $\varepsilon >0$, there exists $\phi \in C^\infty_c ({\mathbf R}^n)$ 	such that
	$ \| f - \phi f \|_{{\mathcal F}L^1_s} < \varepsilon 	$.	
\end{lemma}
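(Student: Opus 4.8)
The plan is to pass to the Fourier transform side, where the statement becomes the existence of an approximate identity for convolution in the Beurling algebra $L^1_s({\mathbf R}^n)=\{h:\langle\cdot\rangle^s h\in L^1\}$ whose members are of the form $(2\pi)^{-n}\widehat{\phi}$ with $\phi\in C_c^\infty({\mathbf R}^n)$. Indeed, $\mathcal F$ maps ${\mathcal F}L^1_s$ isometrically onto $L^1_s$ and, by $\widehat{\phi f}=(2\pi)^{-n}\widehat{\phi}*\widehat{f}$, carries pointwise multiplication into convolution up to the constant $(2\pi)^{-n}$; hence $\|f-\phi f\|_{{\mathcal F}L^1_s}=\|\widehat f-(2\pi)^{-n}\widehat{\phi}*\widehat f\|_{L^1_s}$, and it suffices to produce, for each $\varepsilon>0$, one $\phi\in C_c^\infty({\mathbf R}^n)$ with $\|\widehat f-g*\widehat f\|_{L^1_s}<\varepsilon$, where $g:=(2\pi)^{-n}\widehat{\phi}$.

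First I would fix $\phi_0\in C_c^\infty({\mathbf R}^n)$ with $\phi_0(0)=1$, set $\phi_R:=\phi_0(\cdot/R)$ for $R\ge 1$ (still in $C_c^\infty$), and put $g_R:=(2\pi)^{-n}\widehat{\phi_R}=(2\pi)^{-n}R^n\widehat{\phi_0}(R\,\cdot)$. Then $g_R$ is a Schwartz function, in particular $g_R\in L^1_s$, and Fourier inversion gives $\int g_R=\phi_0(0)=1$. The substitution $\zeta=R\eta$ together with the inequality $\langle\zeta/R\rangle\le\langle\zeta\rangle$ for $R\ge1$ yields the two uniform estimates $\|g_R\|_{L^1_s}\le (2\pi)^{-n}\int|\widehat{\phi_0}(\zeta)|\langle\zeta\rangle^s\,d\zeta=\|\phi_0\|_{{\mathcal F}L^1_s}<\infty$ for all $R\ge1$, and, for each fixed $\delta>0$, $\int_{|\eta|>\delta}\langle\eta\rangle^s|g_R(\eta)|\,d\eta\le (2\pi)^{-n}\int_{|\zeta|>R\delta}|\widehat{\phi_0}(\zeta)|\langle\zeta\rangle^s\,d\zeta\to0$ as $R\to\infty$ by dominated convergence. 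Thus $(g_R)_{R\ge1}$ is a bounded approximate identity for $L^1_s$ lying inside $\mathcal F(C_c^\infty)$.

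Finally I would run the standard Beurling-algebra argument: using $\int g_R=1$, write $g_R*h-h=\int g_R(\eta)(T_\eta h-h)\,d\eta$, so $\|g_R*h-h\|_{L^1_s}\le\int|g_R(\eta)|\,\|T_\eta h-h\|_{L^1_s}\,d\eta$; splitting at $|\eta|=\delta$, the inner part is handled by continuity of translation on $L^1_s$ at the origin and the outer part by the crude bound $\|T_\eta h-h\|_{L^1_s}\lesssim\langle\eta\rangle^s\|h\|_{L^1_s}$ (which follows from $\langle\xi\rangle^s\le 2^{s/2}\langle\xi-\eta\rangle^s\langle\eta\rangle^s$) combined with the mass-escape estimate above. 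This gives $\|g_R*h-h\|_{L^1_s}\to0$ first for $h\in C_c({\mathbf R}^n)$ and then for all $h\in L^1_s$ by density of $C_c$ in $L^1_s$ and the uniform bound $\|g_R*\cdot\|_{L^1_s\to L^1_s}\lesssim\|g_R\|_{L^1_s}\le\|\phi_0\|_{{\mathcal F}L^1_s}$; applying it to $h=\widehat f$ and choosing $R$ large, $\phi:=\phi_R$ works. The only delicate point — and the reason for dilating on the ``physical'' side rather than mollifying directly on the Fourier side — is that the approximate units must themselves be Fourier transforms of $C_c^\infty$ functions; the family $(g_R)$ is precisely such a realization, and beyond that nothing is used except the approximate-units property of Beurling algebras, so one may equally well just transfer \cite[Proposition 1.6.14]{Reiter-Stegeman 2000} once this realization inside $\mathcal F(C_c^\infty)$ is noted.
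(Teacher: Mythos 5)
Your argument is correct, and there is nothing to compare it against: the paper states this lemma without proof, simply citing \cite[Proposition 1.6.14]{Reiter-Stegeman 2000}, and your construction of the bounded approximate identity $g_R=(2\pi)^{-n}\widehat{\phi_R}$ from dilated bumps $\phi_R=\phi_0(\cdot/R)\in C^\infty_c$, combined with the standard translation-continuity/weight-submultiplicativity argument in the Beurling algebra $L^1_s$, is exactly the classical proof behind that citation. The only (immaterial) slip is a factor $(2\pi)^{-n}$ in the uniform bound $\|g_R\|_{L^1_s}\leq (2\pi)^{-n}\|\phi_0\|_{{\mathcal F}L^1_s}$.
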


For the inclusion relation between ${\mathcal F}L^1_s({\mathbf R}^n)$
and $M^{p,1}_s({\mathbf R}^n)$, we have the following
(cf. \cite{Lu}, \cite{Okoudjou}).

\begin{lemma}
\label{inclusion Mp1s subset FL1}
For $1 \leq p \leq 2$ and $s \geq 0$ one has
$M^{p,1}_s ({\mathbf R}^n) \hookrightarrow {\mathcal F}L^1_s({\mathbf R}^n)$.
\end{lemma}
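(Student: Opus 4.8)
The plan is to reduce the inclusion $M^{p,1}_s \hookrightarrow \mathcal{F}L^1_s$ to the statement that $\mathcal{F}L^p(\mathbf{R}^n) \hookrightarrow L^1_{\mathrm{loc}}$ locally, plus a summation over the lattice. Using the Wiener amalgam characterization recalled in the excerpt, we have $M^{p,1}_s(\mathbf{R}^n) = \mathcal{F}^{-1}\big(W(\mathcal{F}L^p, \ell^1_s)(\mathbf{R}^n)\big)$, so it suffices to prove the continuous embedding
$$
W(\mathcal{F}L^p, \ell^1_s)(\mathbf{R}^n) \hookrightarrow L^1_s(\mathbf{R}^n),
$$
since $\mathcal{F}L^1_s$ is by definition $\mathcal{F}^{-1}(L^1_s)$.

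First I would record the \emph{local} ingredient: for $1 \leq p \leq 2$ we have $\mathcal{F}L^p(\mathbf{R}^n) \hookrightarrow L^{p'}(\mathbf{R}^n)$ by Hausdorff--Young (this is exactly the estimate invoked in the proof of Lemma \ref{LocFLp}), and since $\mathcal{F}L^p$-functions localized by a fixed compactly supported bump $T_k\varphi$ live (uniformly in $k$) in a fixed compact set up to translation, Hölder's inequality on that compact set gives $\|g \cdot T_k\varphi\|_{L^1} \lesssim \|g \cdot T_k\varphi\|_{L^{p'}} \lesssim \|g \cdot T_k\varphi\|_{\mathcal{F}L^p}$, with constant independent of $k$. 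Summing against the weight $\langle k \rangle^s$ and using that on $\mathrm{supp}(T_k\varphi)$ one has $\langle x \rangle^s \approx \langle k \rangle^s$, we get
$$
\|g\|_{L^1_s} \leq \sum_{k \in \mathbf{Z}^n} \big\| \langle \cdot \rangle^s\, g\, T_k\varphi \big\|_{L^1}
\lesssim \sum_{k \in \mathbf{Z}^n} \langle k \rangle^s \|g \cdot T_k\varphi\|_{\mathcal{F}L^p}
= \|g\|_{W(\mathcal{F}L^p, \ell^1_s)},
$$
where the first inequality uses that $\sum_k T_k\varphi \equiv 1$ and the triangle inequality in $L^1_s$. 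Taking $g = \widehat{f}$ and recalling $\|f\|_{\mathcal{F}L^1_s} = \|\widehat{f}\|_{L^1_s}$ and $\|f\|_{M^{p,1}_s} \approx \|\widehat{f}\|_{W(\mathcal{F}L^p,\ell^1_s)}$ finishes the argument.

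The one point that needs a little care — and which I expect to be the main (minor) obstacle — is making the ``uniform local Hölder estimate'' rigorous: one must check that the implied constant in $\|h\|_{L^1(K)} \lesssim \|h\|_{\mathcal{F}L^p}$ for $h$ supported in a translate $K = x_0 + \mathrm{supp}\,\varphi$ does not depend on $x_0$, which follows because translation is an isometry on $\mathcal{F}L^p$ (modulation on the Fourier side) and $|K|$ is fixed. A clean alternative is to cite the general Wiener-amalgam inclusion $W(B_1, \ell^{q_1}_{s_1}) \hookrightarrow W(B_2, \ell^{q_2}_{s_2})$ whenever $B_1 \hookrightarrow B_2$ locally, $\ell^{q_1}_{s_1} \hookrightarrow \ell^{q_2}_{s_2}$, applied with $B_1 = \mathcal{F}L^p \hookrightarrow L^{p'} \hookrightarrow L^1_{\mathrm{loc}} = B_2$ and $\ell^1_s \hookrightarrow \ell^1_s$, together with $W(L^1, \ell^1_s) = L^1_s$; this is the same machinery already used repeatedly in the preceding lemmas, so it keeps the proof short and self-consistent.
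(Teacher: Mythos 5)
Your argument is correct and is essentially the paper's own proof: both decompose $\widehat{f}$ with the BUPU $\{T_k\varphi\}$, replace $\langle\xi\rangle^s$ by $\langle k\rangle^s$ on each piece, apply H\"older on the fixed-size support to pass from $L^1$ to $L^{p'}$, and then use Hausdorff--Young to reach $\|\varphi(D-k)f\|_{L^p}$ before summing in $\ell^1_s$. The only difference is cosmetic --- you phrase the chain as the Wiener-amalgam embedding $W(\mathcal{F}L^p,\ell^1_s)\hookrightarrow W(L^1,\ell^1_s)=L^1_s$, while the paper writes the same estimates directly on $\widehat{f}$ --- and your uniformity remark about the translated supports is exactly what makes the paper's implicit constants $k$-independent.
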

\begin{proof}
Let $f \in M^{p,1}_s({\mathbf R}^n)$ and  $\varphi \in C^\infty_c({\mathbf R}^n)$
be such that
${\rm supp~} \varphi \subset [-1,1]^n$
and $\sum_{k \in {\mathbf Z}^n} \varphi(\xi - k) =1$.
Since ${\rm supp~} \varphi (\cdot -k) \subset
k+ [-1,1]^n$,
it follows from
 the Minkowski inequality for integral,
 the H\"older inequality and
the Hausdorff-Young inequality that
\begin{align*}
\| f \|_{{\mathcal F}L^1_s}
&\leq \Big\|
\sum_{k \in {\mathbf Z}^n}  \langle \cdot \rangle^s |  \varphi(\cdot - k) \widehat{f} (\cdot)|
\Big\|_{L^1}
\lesssim
\sum_{k \in {\mathbf Z}^n}  \langle k \rangle^s
\| \varphi (\cdot -k ) \widehat{f} (\cdot)
\|_{L^1} \\
&\lesssim
\sum_{k \in {\mathbf Z}^n}  \langle k \rangle^s
\| \varphi (\cdot -k ) \widehat{f} (\cdot)
\|_{L^{p^\prime}}
\lesssim
\sum_{k \in {\mathbf Z}^n}  \langle k \rangle^s
\| \varphi (D -k ) f
\|_{L^p}
= \| f \|_{M^{p,1}_s},
\end{align*}
which yields the desired result.
\end{proof}

\begin{lemma}
\label{inclusion L_0}
For  $s \geq 0$ we define the space $({\mathcal F}L^1_s)_c$ by
$$
({\mathcal F}L^1_s)_c= \{ f \in {\mathcal F}L^1_s ({\mathbf R}^n) ~|~
{\rm supp~} f ~ {\rm is~compact} \}.
$$
Then we have
$({\mathcal F}L^1_s)_c \hookrightarrow M^{p,1}_s({\mathbf R}^n)$
for $1 \leq p < \infty$.
\end{lemma}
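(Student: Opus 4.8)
The plan is to reduce the claimed embedding $(\mathcal{F}L^1_s)_c \hookrightarrow M^{p,1}_s$ to the case $1 \le p \le 2$, which is already covered in spirit by the reasoning behind Lemma \ref{inclusion Mp1s subset FL1}, and then to upgrade to arbitrary $1 \le p < \infty$ by exploiting compactness of the support. More precisely, fix $f \in (\mathcal{F}L^1_s)_c$, say $\mathrm{supp}\,f \subset B_R(0)$ for some $R>0$, and fix the BUPU $\varphi$ of Remark \ref{equivalent norm of Mpqs}. Only finitely many translates $\varphi(\cdot-k)$, say those with $k \in F$ for a finite set $F = F_R \subset \mathbf{Z}^n$ depending only on $R$, meet $B_R(0)$; hence in the norm equivalence
$$
\| f \|_{M^{p,1}_s} \approx \sum_{k \in \mathbf{Z}^n} \langle k \rangle^s \| \varphi(D-k) f \|_{L^p}
= \sum_{k \in F} \langle k \rangle^s \| \varphi(D-k) f \|_{L^p},
$$
only the finitely many terms with $k \in F$ survive. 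Wait—this is too quick: $\varphi(D-k)f = \mathcal{F}^{-1}(T_k\varphi \cdot \widehat f)$ has Fourier transform supported in $\mathrm{supp}(T_k\varphi) \cap \mathrm{supp}\,\widehat f$, and it is $\widehat f$ (not $f$) that is compactly supported when $f \in \mathcal{F}L^1_s$ with... actually the lemma says $\mathrm{supp}\,f$ is compact, so let me instead use the dual description.

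The cleaner route: since $f \in (\mathcal{F}L^1_s)_c$ means $f$ itself has compact support (and $\widehat f \in L^1_s$), I work on the physical side rather than the Fourier side. Recall $W({\mathcal F}L^p,\ell^q_s) = {\mathcal F}^{-1}(M^{p,q}_s)$, so $f \in M^{p,1}_s$ iff $\widehat f \in W({\mathcal F}L^1, \ell^1_s)$, i.e.
$$
\| f \|_{M^{p,1}_s} \approx \sum_{k \in \mathbf{Z}^n} \langle k \rangle^s \| (T_k\varphi)\, \widehat f \|_{{\mathcal F}L^p}
= \sum_{k} \langle k \rangle^s \| \mathcal{F}^{-1}((T_k\varphi)\widehat f) \|_{L^p}.
$$
Now $\mathcal{F}^{-1}((T_k\varphi)\widehat f) = (2\pi)^{-n}\, (\widehat\varphi(\cdot)e^{ik\cdot}) * f$ (up to normalization), whose support lies in $\mathrm{supp}\,f + \mathbf{R}^n$—not compact. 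So the right move is instead: since $\mathrm{supp}\,f$ is compact, only finitely many $T_k\varphi$ are needed to form a partition of unity \emph{on} $\mathrm{supp}\,f$; but the terms $(T_k\varphi)\widehat f$ for large $k$ are not zero because $\widehat f$ is not compactly supported. So the finiteness must come from decay, not from support.

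Here is the approach I would actually carry out. Estimate term by term: $\| \mathcal{F}^{-1}((T_k\varphi)\widehat f)\|_{L^p} \lesssim \| (T_k\varphi)\widehat f \|_{L^{p'}}$ by Hausdorff--Young when $p \ge 2$ (so $p' \le 2$), but this goes the wrong way. Better: for $p \ge 2$ we have $L^{p'} \hookrightarrow L^1$ locally only with a volume factor, so use $\| (T_k\varphi)\widehat f \|_{{\mathcal F}L^p} \lesssim \| (T_k\varphi)\widehat f\|_{{\mathcal F}L^2} = \| (T_k\varphi)\widehat f\|_{L^2}$ via the embedding ${\mathcal F}L^2 \hookrightarrow {\mathcal F}L^p$ for $p \ge 2$ (i.e. $L^2 \hookrightarrow L^p$ does \emph{not} hold, but ${\mathcal F}L^2 = L^2$ and ${\mathcal F}L^p \supset$... hmm). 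The genuinely correct statement is $M^{2,1}_s \hookrightarrow M^{p,1}_s$ for $p \ge 2$ by Lemma \ref{basic pro mod}(iii), so it suffices to prove $(\mathcal{F}L^1_s)_c \hookrightarrow M^{2,1}_s = W({\mathcal F}L^2,\ell^1_s)$-preimage, and then for $1 \le p \le 2$ apply Lemma \ref{inclusion Mp1s subset FL1} in reverse—no, that gives the opposite inclusion. So: for $1\le p\le 2$, use Lemma \ref{basic pro mod}(iii) again, $M^{1,1}_s \hookrightarrow M^{p,1}_s$, reducing everything to $p=1$. Thus \textbf{the whole lemma reduces to the single claim $(\mathcal{F}L^1_s)_c \hookrightarrow M^{1,1}_s({\mathbf R}^n)$}, since $M^{1,1}_s \hookrightarrow M^{p,1}_s$ for all $1 \le p < \infty$ by monotonicity in $p$.

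For the case $p=1$: by the BUPU characterization, $\| f\|_{M^{1,1}_s} \approx \sum_k \langle k\rangle^s \|\mathcal{F}^{-1}((T_k\varphi)\widehat f)\|_{L^1}$. Now $\|\mathcal{F}^{-1}((T_k\varphi)\widehat f)\|_{L^1} = \|\mathcal{F}^{-1}((T_k\varphi)\widehat f)\|_{{\mathcal F}L^1_{\text{side}}}$... more usefully, since $\widehat{\mathcal{F}^{-1}((T_k\varphi)\widehat f)} = (T_k\varphi)\widehat f$ has support in $k+[-1,1]^n$, and using $\mathcal{F}L^1(k+[-1,1]^n)$-functions: write $g_k := \mathcal{F}^{-1}((T_k\varphi)\widehat f)$, so $\widehat{g_k} = (T_k\varphi)\widehat f$. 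Then $\|g_k\|_{L^1} \le \| \widehat{g_k}\|_{\mathcal{F}L^1}$... circular. The real estimate: $\|g_k\|_{L^\infty} \le (2\pi)^{-n}\|\widehat{g_k}\|_{L^1} = (2\pi)^{-n}\|(T_k\varphi)\widehat f\|_{L^1}$, and since $\widehat{g_k}$ is supported in a unit cube, $\|g_k\|_{L^1}$ is controlled by $\|g_k\|_{L^\infty}$ only after localizing $g_k$ on the physical side—but $\mathrm{supp}\,g_k$ is not compact. \emph{However}, $\mathrm{supp}\,f$ IS compact: $g_k = f * \mathcal{F}^{-1}(T_k\varphi) \cdot(2\pi)^{-n}$, and $\mathcal{F}^{-1}(T_k\varphi)(x) = e^{ikx}\,\mathcal{F}^{-1}\varphi(x)$ is Schwartz, so $g_k$ is a convolution of the compactly supported distribution $f$ with a Schwartz function; hence $g_k$ is a smooth function which, while not compactly supported, decays rapidly uniformly in $k$ away from $\mathrm{supp}\,f$ because $\mathcal{F}^{-1}\varphi$ is Schwartz. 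Concretely, for $|x| \ge R+1$ (with $\mathrm{supp}\,f\subset B_R$), $|g_k(x)| \le C_N \langle \mathrm{dist}(x,B_R)\rangle^{-N}\, \|\widehat f\|_{L^\infty_s}$ via the Schwartz decay of $\mathcal{F}^{-1}\varphi$, uniformly in $k$; combined with $\|g_k\|_{L^\infty(B_{R+1})} \lesssim \|(T_k\varphi)\widehat f\|_{L^1} \le \|\widehat f\, \mathbf 1_{k+[-1,1]^n}\|_{L^1}$, one gets $\|g_k\|_{L^1} \lesssim \|\widehat f\, \mathbf 1_{k+[-1,1]^n}\|_{L^1}$ with constant depending only on $R$, $n$. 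Summing,
$$
\sum_k \langle k\rangle^s \|g_k\|_{L^1} \lesssim \sum_k \langle k\rangle^s \int_{k+[-1,1]^n}|\widehat f(\xi)|\,d\xi \lesssim \int_{\mathbf{R}^n} \langle\xi\rangle^s |\widehat f(\xi)|\,d\xi = \|f\|_{\mathcal{F}L^1_s},
$$
using $\langle k\rangle^s \lesssim \langle\xi\rangle^s$ for $\xi \in k+[-1,1]^n$ and bounded overlap.

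\textbf{The main obstacle} is precisely the point just navigated: $M^{p,1}_s$ functions are not compactly supported even when $\widehat f$ is nicely localized, so one cannot simply "count finitely many terms." The embedding constant must be allowed to depend on the size $R$ of $\mathrm{supp}\,f$ (the lemma only claims the embedding of the subspace $(\mathcal{F}L^1_s)_c$, not a uniform bound), and the mechanism producing the finite bound is the uniform-in-$k$ Schwartz decay of $g_k = (2\pi)^{-n} f * (M_k \mathcal{F}^{-1}\varphi)$ off a fixed neighborhood of $\mathrm{supp}\,f$. Everything else—reducing to $p=1$ via Lemma \ref{basic pro mod}(iii), the BUPU norm equivalence, the weight comparison $\langle k\rangle^s \approx \langle\xi\rangle^s$ on unit cubes—is routine.
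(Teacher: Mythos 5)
Your reduction to $p=1$ via $M^{1,1}_s \hookrightarrow M^{p,1}_s$ (Lemma \ref{basic pro mod}(iii)) is fine, and you have correctly located where the compact support of $f$ must enter. But the key estimate you assert, $\|g_k\|_{L^1}\lesssim_R \|\widehat f\,\mathbf 1_{k+[-1,1]^n}\|_{L^1}$ with $g_k=\varphi(D-k)f$, does not follow from the two bounds you actually establish. The near-field bound $\|g_k\|_{L^1(B_{R+1})}\lesssim_R\|g_k\|_{L^\infty}\le (2\pi)^{-n}\|(T_k\varphi)\widehat f\|_{L^1}$ is correct. The far-field bound, however, is $|g_k(x)|\le C_N\langle \mathrm{dist}(x,B_R)\rangle^{-N}\|f\|_{L^1}$ \emph{uniformly in} $k$: integrating it over $B_{R+1}^c$ gives a constant $C(R,\varphi,N)\|f\|_{L^1}$ carrying no decay in $k$, so with this bound $\sum_k\langle k\rangle^s\|g_k\|_{L^1(B_{R+1}^c)}$ diverges and the final summation is unjustified. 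The uniform Schwartz decay of $g_k=f*(M_k\mathcal F^{-1}\varphi)$ away from $\mathrm{supp}\,f$ is true but useless for the weighted $\ell^1_s$ sum; what is needed is a far-field estimate that simultaneously records the frequency localization, for instance $\|g_k\|_{L^1}\le C_N\int_{{\mathbf R}^n}|\widehat f(\eta)|\langle k-\eta\rangle^{-N}\,d\eta$, obtained by writing $f=\phi f$ for a cutoff $\phi\in C_c^\infty$ equal to $1$ on $\mathrm{supp}\,f$, hence $\widehat f=(2\pi)^{-n}\widehat\phi*\widehat f$, and estimating $\|\mathcal F^{-1}(T_k\varphi\cdot T_\eta\widehat\phi)\|_{L^1}\le C_N\langle k-\eta\rangle^{-N}$ from the compact support of $\varphi$ and the rapid decay of $\widehat\phi$ and its derivatives. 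With that convolution bound the weighted sum closes; without it your argument has a genuine gap. (Interpolating your two bounds does not rescue it either: a square root of $\|(T_k\varphi)\widehat f\|_{L^1}$ is not summable against $\langle k\rangle^s$.)

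The repaired argument is essentially the paper's proof in disguise. The paper simply writes $f=\phi f$ and applies Lemma \ref{product estimate}: $\|f\|_{M^{p,1}_s}=\|f\phi\|_{M^{p,1}_s}\lesssim\|f\|_{M^{\infty,1}_s}\|\phi\|_{M^{p,1}_s}$, and then checks $\|f\|_{M^{\infty,1}_s}\lesssim\|f\|_{{\mathcal F}L^1_s}$ from the BUPU characterization and Hausdorff--Young. This treats all $1\le p<\infty$ at once, puts the dependence on $\mathrm{supp}\,f$ entirely into $\|\phi\|_{M^{p,1}_s}$, and the product estimate already encapsulates exactly the $\langle k-\eta\rangle^{-N}$ convolution mechanism your term-by-term localization was missing.
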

\begin{proof}
Let  $f  \in {\mathcal F}L^1_s({\mathbf R}^n)$ and
${\rm supp~}f$ be compact.
Then for  $\phi \in {\mathcal S}({\mathbf R}^n)$ with
$\phi(x)=1$ on ${\rm supp~} f$, we have by Lemma \ref{product estimate}
\begin{align*}
\| f \|_{M^{p,1}_s}  =
\| f \phi \|_{M^{p,1}_s}
\lesssim \| f \|_{M^{\infty,1 }_s} \| \phi \|_{M^{p,1}_s}.
\end{align*}
Moreover,  for $\varphi \in {\mathcal S}({\mathbf R}^n)$
with $\sum_{k \in {\mathbf Z}^n} \varphi (\xi -k)=1$,
we have that
\begin{align*}
\| f \|_{M^{\infty,1}_s}
=
\sum_{k \in {\mathbf Z}^n}
\langle k \rangle^s \| \varphi(D-k) f \|_{L^\infty}
\lesssim
\sum_{k \in {\mathbf Z}^n}
\langle k \rangle^s \|  \varphi (\cdot -k)  \widehat{f} \|_{L^1}
\lesssim \| f \|_{{\mathcal F}L^1_s},
\end{align*}
which implies the desired result.
\end{proof}


\section{Approximate units}
\label{section Approximate units}

In this section, we prove the following result,
which corresponds to the $M^{p,q}_s$-version of
Bhimani-Ratnakumar \cite[Proposition 3.14]{Bhimani Ratnakumar}.
\begin{lemma}
\label{f-psif}
Given $1 \leq p, q < \infty$
and  $s> {n}/{q^\prime}$, or
$q=1$ and  $s \geq 0$. Then for any $f \in M_s^{p,q}({\mathbf R}^n)$,
and  $\varepsilon>0$ there exists
$\phi \in C_c^\infty ({\mathbf  R}^n)$ such that
$$
\| f-\phi f \|_{M_s^{p,q}} < \varepsilon.
$$
\end{lemma}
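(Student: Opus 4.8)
The plan is to reduce the statement to the already-established fact that $\mathcal{F}L^1_s(\mathbf{R}^n)$ has approximate units consisting of functions in $C^\infty_c(\mathbf{R}^n)$ (Lemma \ref{approximate units in FL1s}), transported to the modulation space setting through the equivalent discrete norm described just after Lemma \ref{basic pro mod}. First I would fix $f \in M^{p,q}_s(\mathbf{R}^n)$ and $\varepsilon>0$, and use density of $\mathcal{S}(\mathbf{R}^n)$ in $M^{p,q}_s(\mathbf{R}^n)$ (Lemma \ref{basic pro mod}(ii), valid since $p,q<\infty$) to pick $g \in \mathcal{S}(\mathbf{R}^n)$ with $\|f-g\|_{M^{p,q}_s} < \varepsilon/3$. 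The point of this step is that for a Schwartz function the tail of the frequency decomposition is easy to control, whereas for a general $f$ it is not.

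For the Schwartz approximant $g$, the idea is to choose $\phi \in C^\infty_c(\mathbf{R}^n)$ equal to $1$ on a large ball $B_R(0)$, with $R$ large, and to estimate $\|g - \phi g\|_{M^{p,q}_s}$. Here I expect to work on the Fourier transform side: $\widehat{g}$ is Schwartz, and $(g-\phi g)^\wedge = (2\pi)^{-n}\widehat{g} * (\delta_0 - \widehat{\phi})$; alternatively, and more cleanly, one can estimate $\|g - \phi g\|_{M^{p,q}_s}$ directly using the product estimate of Lemma \ref{product estimate}, writing $g - \phi g = (1-\phi) g$. Since $g$ is Schwartz and $1-\phi$ vanishes on $B_R(0)$, the function $(1-\phi)g$ and all its derivatives decay rapidly and are supported in $|x|\ge R$; one then checks that the relevant modulation-space norm of $(1-\phi)g$ (computed via the STFT, or via the equivalent norm $\big(\sum_k \langle k\rangle^{sq}\|\varphi(D-k)(1-\phi)g\|_{L^p}^q\big)^{1/q}$) tends to $0$ as $R\to\infty$. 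To make the cutoff $\phi$ smooth and compactly supported while keeping uniform control, take $\phi(x) = \eta(x/R)$ for a fixed $\eta \in C^\infty_c$ with $\eta\equiv 1$ near $0$; Lemma \ref{basic pro mod}(v)-type dilation bounds (or a direct argument, since $g$ is Schwartz) keep the multiplier norm of $1-\phi$ controlled uniformly in $R\ge 1$, so that $\|(1-\phi)g\|_{M^{p,q}_s}$ really does go to $0$. Fix $R$ so that this is $<\varepsilon/3$, and call the resulting cutoff $\phi$.

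Finally I would combine the estimates using the algebra property (Lemma \ref{basic pro mod}(iv)): writing $\|f - \phi f\|_{M^{p,q}_s} \le \|f-g\|_{M^{p,q}_s} + \|g-\phi g\|_{M^{p,q}_s} + \|\phi(g-f)\|_{M^{p,q}_s} \le \varepsilon/3 + \varepsilon/3 + c\|\phi\|_{M^{p,q}_s}\|f-g\|_{M^{p,q}_s}$, and one must be slightly careful that the last term also be small. The clean fix is to choose $g$ \emph{after} knowing the required quality, or rather to choose $\phi$ first on a ball large enough and then, with $\|\phi\|_{M^{p,q}_s}$ now a fixed finite constant, pick $g$ with $\|f-g\|_{M^{p,q}_s} < \varepsilon/(3\max(1,c\|\phi\|_{M^{p,q}_s}))$. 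But $\phi$ depends on $g$ through $R$... so the genuinely correct order is: approximate $f$ by $g\in\mathcal S$ to within $\varepsilon/3$; then choose $\phi$ depending on $g$ with $\|(1-\phi)g\|_{M^{p,q}_s}<\varepsilon/3$; then note $\|\phi(f-g)\|_{M^{p,q}_s}\le c\|\phi\|_{M^{p,q}_s}\|f-g\|_{M^{p,q}_s}$, and if this is not already $<\varepsilon/3$ we shrink the original tolerance on $\|f-g\|$ from the start — i.e. run the argument with $\varepsilon' = \varepsilon/(3\max(1,c\,C_\eta))$ where $C_\eta$ is a uniform bound for $\|\eta(\cdot/R)\|_{M^{p,q}_s}$ over $R\ge 1$, which exists by the dilation estimate. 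With this bookkeeping in place the three terms sum to less than $\varepsilon$.

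The main obstacle is the middle step: showing $\|(1-\phi)g\|_{M^{p,q}_s}\to 0$ as the cutoff radius grows, with enough uniformity in the multiplier norm of the cutoffs that the bookkeeping above closes. For $q=1,\ s\ge 0$ this is most transparent via Lemma \ref{inclusion L_0} together with Lemma \ref{approximate units in FL1s}: $(1-\phi)g$ has compact support (if one instead cuts off to get $\phi g$ with $\phi g - g$ small one uses that $\widehat g\in\mathcal F L^1_s$ and the $\mathcal F L^1_s$ approximate-unit lemma directly, then transfers via $(\mathcal F L^1_s)_c\hookrightarrow M^{p,1}_s$), so in fact the cleanest route for $q=1$ is: pick $\phi\in C^\infty_c$ with $\|\widehat g-\phi\widehat g\|_{\mathcal F L^1_s}$ small by Lemma \ref{approximate units in FL1s}, observe $g-{\mathcal F}^{-1}(\phi\widehat g)=\mathcal F^{-1}((1-\phi)\widehat g)\in(\mathcal F L^1_s)_c$ has small $\mathcal F L^1_s$-norm, and apply the continuous embedding $(\mathcal F L^1_s)_c\hookrightarrow M^{p,1}_s$ of Lemma \ref{inclusion L_0}. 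For the range $s>n/q'$ with general $q$ one argues similarly after first replacing $f$ by $g\in\mathcal S$ and using the rapid decay of $\widehat g$ to dominate the tail sum $\sum_{|k|>N}\langle k\rangle^{sq}\|\varphi(D-k)g\|_{L^p}^q$ directly.
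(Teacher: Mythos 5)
Your overall skeleton coincides with the paper's: approximate $f$ by $g\in\mathcal S(\mathbf R^n)$, use dilated cutoffs $\psi_\lambda=\psi(\lambda\,\cdot)$, control $\|\psi_\lambda\|_{M^{\infty,1}_s}$ uniformly via Lemma \ref{basic pro mod}(v), and close the triangle inequality with the product estimate of Lemma \ref{product estimate}; the bookkeeping in your third paragraph is exactly what the paper does. The genuine gap is the middle step, which is the real content of the lemma: you never prove that $\|(1-\phi_R)g\|_{M^{p,q}_s}\to 0$ as $R\to\infty$, and the concrete routes you offer for it do not work. First, $(1-\phi_R)g$ does \emph{not} have compact support --- it is supported in $\{|x|\ge R\}$ --- so Lemma \ref{inclusion L_0} (the embedding $(\mathcal FL^1_s)_c\hookrightarrow M^{p,1}_s$, whose constant moreover depends on the support) is not applicable; and the global embedding $\mathcal FL^1_s\hookrightarrow M^{p,1}_s$ fails for $p<\infty$ (the paper only has $\mathcal FL^1_s\hookrightarrow M^{\infty,1}_s$), so smallness of $\|(1-\phi)g\|_{\mathcal FL^1_s}$ obtained from Lemma \ref{approximate units in FL1s} does not transfer to the $M^{p,1}_s$ norm. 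Second, your alternative of cutting off $\widehat g$ produces $\mathcal F^{-1}(\phi\widehat g)$, which is a convolution of $g$ with $\mathcal F^{-1}\phi$ and not a pointwise product $\phi g$ with $\phi\in C^\infty_c(\mathbf R^n)$, so even if it were small it would not prove the statement; besides, $\mathcal F^{-1}((1-\phi)\widehat g)$ is not compactly supported in the spatial variable, so it does not lie in $(\mathcal FL^1_s)_c$ either. Likewise, for $s>n/q'$ your remark about the tail sum $\sum_{|k|>N}\langle k\rangle^{sq}\|\varphi(D-k)g\|^q_{L^p}$ concerns truncation in frequency and says nothing about the spatial cutoff $(1-\phi)g$.

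What is missing is precisely the paper's Lemma \ref{approx M11s}: for $g\in\mathcal S(\mathbf R^n)$ and $\psi\in C^\infty_c(\mathbf R^n)$ with $\psi(0)=1$ one shows $\|(1-\psi_\lambda)g\|_{M^{1,1}_s}\to 0$ as $\lambda\to 0$ by passing to the Fourier side, writing $\widehat g-(2\pi)^{-n}\widehat{\psi_\lambda}*\widehat g$ as the average $(2\pi)^{-n}\int(\widehat g-T_{\lambda\eta}\widehat g)\widehat\psi(\eta)\,d\eta$, and invoking dominated convergence together with continuity of translation; the reduction to the $M^{1,1}_s$ norm, which dominates every $\|\cdot\|_{M^{p,q}_s}$ by Lemma \ref{basic pro mod}(iii), is what makes the estimate uniform in $p$ and $q$. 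Once this step is supplied, the rest of your argument is sound and reproduces the paper's proof.
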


We remark that Bhimani
\cite[Proposition 4.8]{Bhimani}
considers the case  $q=1$ and $s=0$.

\subsection{Technical lemmas}

To prove Theorem \ref{f-psif} we prepare a lemma.
%
\begin{lemma}
\label{approx M11s}
Let  $s \geq 0$ and
$\psi \in C^\infty_c  ({\mathbf R}^n)$ be such that
$\psi(0)=1$.  For $0< \lambda <1$,
we define $\psi_\lambda  (x)=\psi( \lambda x)$.
Then, for any $g \in {\mathcal S}({\mathbf R}^n)$ and
$\varepsilon>0$, there exists $0< \lambda_0 <1$ such that
$$
\| (1-  \psi_\lambda )g \|_{M^{1,1}_s}  <  \varepsilon
\quad \forall 0< \lambda < \lambda_0. $$
\end{lemma}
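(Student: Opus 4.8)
The plan is to reduce the statement to a norm estimate on the Fourier transform side, where $M^{1,1}_s$ corresponds (up to equivalence of norms, by the Wiener amalgam characterization recalled after Lemma \ref{basic pro mod}) to $W({\mathcal F}L^1,\ell^1_s)$, and then to show that $\|(1-\psi_\lambda)g\|_{M^{1,1}_s}\to 0$ as $\lambda\to 0^+$ by combining a uniform bound with pointwise convergence. First I would fix $g\in{\mathcal S}({\mathbf R}^n)$. Since $\psi(0)=1$, for each fixed $x$ we have $\psi_\lambda(x)=\psi(\lambda x)\to 1$ as $\lambda\to 0$, so $(1-\psi_\lambda)g\to 0$ pointwise, and in fact on every compact set uniformly together with all derivatives, because $\psi\in C^\infty_c$ and $g\in{\mathcal S}$; moreover the family $\{(1-\psi_\lambda)g : 0<\lambda<1\}$ is bounded in ${\mathcal S}({\mathbf R}^n)$ (each Schwartz seminorm of $(1-\psi_\lambda)g$ is controlled uniformly in $\lambda$, since differentiating $\psi(\lambda x)$ brings down a factor $\lambda^{|\alpha|}\le 1$ and $\psi$ has bounded derivatives, while $g$ supplies rapid decay).

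The key step is then to upgrade ${\mathcal S}$-boundedness plus pointwise-type convergence to convergence in $M^{1,1}_s$. I would use the dominated convergence theorem in the BUPU description: writing $\|(1-\psi_\lambda)g\|_{M^{1,1}_s}\approx\sum_{k\in{\mathbf Z}^n}\langle k\rangle^s\|\varphi(D-k)((1-\psi_\lambda)g)\|_{L^1}$ is awkward because the multiplication happens on the space side, so instead it is cleaner to observe directly that $M^{1,1}_s\hookleftarrow{\mathcal S}$ with the inclusion continuous for a fixed large Schwartz seminorm, i.e. there is $N$ and $C$ with $\|h\|_{M^{1,1}_s}\le C\sup_{|\alpha|,|\beta|\le N}\|x^\alpha\partial^\beta h\|_{L^\infty}\cdot(\text{finite normalizing integral})$; more robustly, $\|h\|_{M^{1,1}_s}\lesssim \||x|^{n+1+s}\,h\|_{L^\infty_{\mathrm{loc}}}$-type bounds via $V_\phi$. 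Concretely I would estimate $V_\phi((1-\psi_\lambda)g)(x,\xi)$ and show it is dominated, uniformly in $\lambda$, by an integrable (against $\langle\xi\rangle^s\,d\xi\,dx$) function, while tending to $0$ pointwise in $(x,\xi)$ as $\lambda\to 0$; then dominated convergence gives $\|(1-\psi_\lambda)g\|_{M^{1,1}_s}\to 0$, and choosing $\lambda_0$ so that this is $<\varepsilon$ for $\lambda<\lambda_0$ finishes the proof.

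For the domination: by \eqref{basic STFT}, $|V_\phi h(x,\xi)|=(2\pi)^{-n}|(h\ast M_\xi\phi^*)(x)|\le(2\pi)^{-n}\int|h(x-t)|\,|\phi^*(t)|\,dt$, and applying $\langle D\rangle$-type (i.e. integration-by-parts in $t$) bounds one gets $|V_\phi h(x,\xi)|\lesssim \langle\xi\rangle^{-M}\sup_{|\beta|\le M}\int |\partial^\beta h(x-t)|\,|\phi(t)|\,dt$ for any $M$. Taking $M>n/\min(1,\ldots)$ and using that $\partial^\beta((1-\psi_\lambda)g)$ is bounded in $L^1\cap L^\infty$ uniformly in $\lambda$ and supported-wise controlled by a fixed $L^1$ function in $x$ (rapid decay of $g$), we obtain an $x$-and-$\xi$-integrable majorant; pointwise, $\partial^\beta((1-\psi_\lambda)g)(y)\to 0$ for each $y$ and is bounded, so $\int|\partial^\beta((1-\psi_\lambda)g)(x-t)||\phi(t)|dt\to 0$ for each $x$ by dominated convergence in $t$, hence $V_\phi((1-\psi_\lambda)g)(x,\xi)\to 0$ pointwise. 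Applying dominated convergence once more in $(x,\xi)$ against $\langle\xi\rangle^{s}$ yields the claim.

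The main obstacle I anticipate is purely bookkeeping: organizing the uniform-in-$\lambda$ Schwartz bounds on $(1-\psi_\lambda)g$ so that, after the $V_\phi$ estimate, the resulting majorant is genuinely integrable against $\langle\xi\rangle^s\,dx\,d\xi$ (this forces choosing $M=M(s,n)$ large enough and is where $s\ge 0$, as opposed to $s<0$, is comfortably used — no, $s\ge 0$ is not even needed for integrability of the majorant, only $M$ large, but it is the hypothesis given). There is no deep difficulty: everything reduces to the elementary fact that $\psi(\lambda\cdot)\to 1$ locally uniformly with derivatives of $\psi(\lambda\cdot)$ uniformly bounded, combined with the rapid decay of $g$; the only care needed is that the convergence is made quantitative enough to extract a single $\lambda_0$ working for the given $\varepsilon$, which the dominated convergence argument supplies automatically.
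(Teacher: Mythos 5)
Your proof is correct, but it takes a genuinely different route from the paper's. The paper works on the Fourier transform side: using $\psi(0)=1$ it writes $\widehat{g}-(2\pi)^{-n}\widehat{\psi_\lambda}*\widehat{g}$ as an average of translation differences $\widehat{g}-T_{\lambda\eta}\widehat{g}$ against $\widehat{\psi}(\eta)\,d\eta$, bounds the inner quantity $\|\langle\xi\rangle^s V_{\widehat{\phi}}(\widehat{g}-T_{\lambda\eta}\widehat{g})\|_{L^1}$ by $\langle\eta\rangle^s$ times the (finite) $M^{1,1}_s$-norm of $g$ using submultiplicativity of the weight, and then applies dominated convergence in the $\eta$ variable. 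You instead stay on the space side, use $\psi(0)=1$ only to get $\psi_\lambda\to 1$ pointwise, observe that $\{(1-\psi_\lambda)g\}_{0<\lambda<1}$ is bounded in ${\mathcal S}$, and extract from the standard integration-by-parts estimate a $\lambda$-uniform majorant of $|V_\phi((1-\psi_\lambda)g)(x,\xi)|$ of the form $C\langle\xi\rangle^{-2M}\langle x\rangle^{-L}$, then apply dominated convergence directly in $(x,\xi)$. Your approach is more elementary and self-contained (no convolution identity for $\widehat{\psi_\lambda}*\widehat{g}$ is needed), at the price of the bookkeeping you acknowledge; the paper's approach avoids re-deriving decay of the STFT and, more importantly, sets up the translation-difference template that is reused almost verbatim in Lemma \ref{Wiener-Levy pre-lemma 1}. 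One small imprecision: integration by parts in $t$ also puts derivatives on $\phi(t-x)$, so your displayed bound $\langle\xi\rangle^{-M}\sup_{|\beta|\le M}\int|\partial^\beta h(x-t)|\,|\phi(t)|\,dt$ should read with a sum over derivatives landing on both factors, e.g.\ $\langle\xi\rangle^{-2M}\sum_{|\beta_1|+|\beta_2|\le 2M}\int|\partial^{\beta_1}h(t)|\,|\partial^{\beta_2}\phi(t-x)|\,dt$; since $\phi$ is a fixed Schwartz window this changes nothing in the argument.
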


\begin{proof}
Let $\phi \in {\mathcal S}({\mathbf R}^n) \setminus \{ 0 \}$.
We first  note that
\begin{align*}
V_\phi( (1-\psi_\lambda )g )(x, \xi)
&=
(2 \pi)^{-n}
e^{- i x \xi} V_{\widehat{\phi}}
\big(\widehat{g}- (2 \pi)^{-n}
(\widehat{\psi_\lambda} *  \widehat{g} )
\big)(\xi, -x)
\end{align*}
by \eqref{basic STFT} in Section \ref{STFT}.
Moreover,  since
$\widehat{ \psi_\lambda } (\eta) = \frac{1}{\lambda^n} \widehat{\psi} ( \frac{\eta}{\lambda} ) $
and
$1= \psi(0) =\frac{1}{(2 \pi)^n} \int_{{\mathbf R}^n} \widehat{\psi_\lambda} (\eta) d \eta$,
we have
\begin{align*}
\widehat{g} (t) -  (2 \pi)^{-n} ( \widehat{\psi_\lambda} * \widehat{g}) (t)
&
=
\frac{1}{ (2 \pi \lambda)^n}
\int_{{\mathbf R}^n}
( \widehat{g} (t) - \widehat{g}  (t- \eta) )
\widehat{\psi} \Big( \frac{\eta}{\lambda}  \Big) d \eta \\
&=
\frac{1}{(2 \pi)^n}
\int_{{\mathbf R}^n }
( \widehat{g} \big(t)  - (T_{ \lambda \eta} \widehat{g})  (t) \big) \widehat{\psi} (\eta) d \eta
\end{align*}
and thus
$$
V_{\widehat{\phi}}  (\widehat{g}-
(2 \pi)^{-n}
(\widehat{\psi_\lambda} *  \widehat{g} ) )(\xi, -x)
=
\frac{1}{(2 \pi)^{2n}}
\int_{{\mathbf R}^n }
V_{\widehat{\phi}} ( \widehat{g} - T_{\lambda \eta} \widehat{g} )(\xi,-x)
 \widehat{\psi} (\eta) d \eta.
$$
Therefore we obtain by the Fubini Theorem that
\begin{align*}
\| (1- \psi_\lambda ) g \|_{M^{1,1}_s}
&  \approx
\iint_{{\mathbf R}^{2n}}
\langle \xi \rangle^s
|V_{ \widehat{\phi} } (\widehat{g}  - (2 \pi)^{-n}
(\widehat{\psi_\lambda} * \widehat{g}   ) ) (\xi, -x) |
 dx d \xi \\
& \lesssim
\int_{{\mathbf R}^n}
\| \langle \xi \rangle^s
V_{ \widehat{\phi} } ( \widehat{g} - T_{\lambda \eta} \widehat{g}   )
(\xi,-x) \|_{L^1 ({\mathbf R}^{2n}_{(x,\xi)})}
|\widehat{\psi} (\eta)| d \eta.
\end{align*}
On the other hand, since  $0< \lambda <1$, $s \geq 0$ and
$$
V_{ \widehat{\phi} } (T_{\lambda \eta} \widehat{g} ) (\xi, -x)
= e^{-i \lambda \eta (-x) } (V_{ \widehat{\phi} }   \widehat{g} ) (\xi - \lambda \eta,-x),
$$
we have
\begin{align*}
&
\| \langle \xi \rangle^s
V_{ \widehat{\phi} } ( \widehat{g} - T_{\lambda \eta} \widehat{g} )
(\xi,-x) \|_{L^1 ({\mathbf R}^{2n}_{(x,\xi)})} \\
& \lesssim
\iint_{{\mathbf R}^{2n}}
\langle \xi \rangle^s
| V_{ \widehat{\phi} } \widehat{g} (\xi, -x)  | d x d \xi
+
\langle \lambda \eta \rangle^{s}
\iint_{{\mathbf R}^{2n}}
\langle \xi - \lambda  \eta \rangle^s
| V_{ \widehat{\phi} } \widehat{g} (\xi - \lambda \eta,-x)  | d x d \xi  \\
& \lesssim
\langle \eta \rangle^s  \| \langle \xi \rangle^s V_{ \widehat{\phi} }
\widehat{g} (\xi, -x) \|_{L^1({\mathbf R}^{2n}_{(x, \xi)} ) }.
\end{align*}
We note that since $\widehat{\phi}, \widehat{g} \in {\mathcal S} ({\mathbf R}^n) $,
we have
$V_{ \widehat{\phi}  } \widehat{g} \in {\mathcal S} ({\mathbf R}^{2n}) $,
and also
\begin{align*}
V_{ \widehat{\phi}} ( \widehat{g} - T_{\lambda \eta}\widehat{g}    )(\xi,-x)
&=
\int_{{\mathbf R}^n}
(\widehat{g}(t) - \widehat{g} (t-\lambda \eta))  \overline{  \widehat{\phi} (t-\xi)} e^{ ixt}dt
\to
0  \quad (\lambda \to 0)
\end{align*}
for all $\eta, \xi, x \in {\mathbf R}^n$.
Thus it follows from
the Lebesgue convergence theorem that
\begin{align*}
\| (1- \psi_\lambda ) g \|_{M^{1,1}_s}
\lesssim
\iiint_{{\mathbf R}^{3n}}
\langle \xi \rangle^s
|V_{ \widehat{\phi}} (\widehat{g} - T_{ \lambda \eta}\widehat{g})(\xi,-x)|
|\widehat{\psi  } (\eta) |
dx d \xi d \eta
\to 0
\end{align*}
as $\lambda \to 0$,
which implies the desired result.

\end{proof}

\subsection{The proof of Theorem \ref{f-psif}}

Let  $\varepsilon >0$.
Since ${\mathcal S} ({\mathbf R}^n)$ is dense in $M_s^{p,q}({\mathbf R}^n)$,
there exists $g  \in  {\mathcal S}({\mathbf R}^n)$ such that
$ \|  f-g   \|_{M_s^{p,q}}   <\varepsilon $.
Moreover  by Lemma \ref{approx M11s},
there exist  $\psi  \in   C_c^\infty
({\mathbf R}^n)$ and $0< \lambda_0 <1$ such that
$
\|   (1-\psi_\lambda)g  \|_{M_s^{1,1}}  <  \varepsilon
$
for all $0< \lambda < \lambda_0 $,
where $\psi_\lambda (x) = \psi (\lambda x)$.
Thus we have  by  Lemma
 \ref{basic pro mod} (v)
and Lemma \ref{product estimate}  that
\begin{align*}
\|  f-  \psi_\lambda  f  \|_{M_s^{p,q}}
&  \lesssim
 \|  f-g \|_{M_s^{p,q}}
+ \| \psi_\lambda  (f-g)  \|_{M_s^{p,q}}
+  \|   (1-\psi_\lambda)g  \|_{M_s^{p,q}}\\
&  \lesssim
(1  +  \|   \psi_\lambda \|_{M^{\infty,q}_s} )  \| f-g  \|_{M_s^{p,q}}
+  \|  (1-\psi_\lambda)g  \|_{M_s^{p,q}}  \\
& \lesssim
(1  +  \|    \psi_\lambda  \|_{M_s^{\infty,1}} )
\|  f-g  \|_{M_s^{p,q}}  +    \|  (1-\psi_\lambda)g  \|_{M_s^{1,1}} \\
& \leq
(2  +   C  \|    \psi \|_{M_s^{\infty,1}} ) \varepsilon,
\end{align*}
which implies the desired result.

\section{Closed ideals in $M^{p,1}_s({\mathbf R}^n)$}
\label{closed ideals in Mp1s}

In this section, we consider the  closed ideals
in  $M_s^{p,1}({\bf R}^n)$,
which play an important role in
Section \ref{Set of spectral synthesis in Mp1s}.
Throughout this section, $X$ stands for $M^{p,1}_s({\mathbf R}^n)$
($1 \leq p< \infty$, $s \geq 0$)
or ${\mathcal F}L^1_s ({\mathbf R}^n)$
$(s \geq 0)$.

\begin{definition}
Let $I$ be a linear subspace of $X$.
Then $I$ is called an {\it ideal} in $X$
if $fg \in I$ whenever $f \in X$ and $g \in I$.
Moreover, if an ideal $I$ in $X$ is a closed subset of
$X$, then $I$ is called a {\it closed ideal} in $X$.
For a subset $S$ of $X$,
the set $\bigcap_{\lambda \in \Lambda} I_\lambda$
is called  the ideal generated by $S$, where
$\{ I_\lambda \}_{\lambda \in \Lambda}$
denoted the set of all ideals in $X$ containing $S$.
\end{definition}

It is easy to see that the closed ideal generated by $S$
coincides with the closure in $X$ of the set
$$
\Big\{ \sum^n_{j=1} f_j g_j ~\Big|~
f_j \in X, \ g_j \in S , \ n \in {\mathbf N}
\Big\}.
$$
\begin{definition}
For  a closed ideal $I$ of $X$  the zero-set of $I$
is defined by $\displaystyle Z(I)=\bigcap_{f\in I}f^{-1}( \{ 0 \} )$
with $f^{-1}( \{ 0 \} ) = \{ x \in {\mathbf R}^2 ~|~ f(x) = 0\}$.
\end{definition}

We note that $x \in Z(I)$ if and only if
$f(x) =0$ for all $f \in I$.
Moreover,  $Z(I)$ is a closed subset of $X$
whenever $I$ is a closed ideal in $X$.
In fact, if $f \in X$, then
$f$ is continuous on ${\mathbf R}^n$ and thus
$f^{-1} (\{ 0 \})$ is a closed subset of ${\mathbf R}^n$.

The following lemma seems to be known to many people,
but for the reader's convenience, we  give the
proof. We will write $f|_E =0$ if $f(x) =0$ for all $x \in E$.
\begin{lemma}
	\label{Basic lemma 1 closed ideal}
	Let $E$ be a closed subset of ${\mathbf R}^n$.
	Then
	$$
	I(E)= 	\{ f \in X ~|~ f |_E = 0 \}
	$$
	is a closed ideal in $X$ with $E = Z(I(E))$.
	\end{lemma}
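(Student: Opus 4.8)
The goal is to show that for a closed set $E \subseteq {\mathbf R}^n$, the set $I(E)=\{f\in X \mid f|_E=0\}$ is a closed ideal with $Z(I(E))=E$. The plan is to verify the three separate assertions in turn: that $I(E)$ is an ideal, that it is closed, and that its zero-set recovers $E$. The first two are formal; the content is in the third, specifically the inclusion $E \subseteq Z(I(E))$, which requires producing enough functions in $I(E)$.

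First I would check that $I(E)$ is an ideal. Since $X$ is a multiplication algebra (Lemma \ref{basic pro mod}(iv), using $p<\infty$, $q=1$, $s\ge 0$, or the corresponding statement for ${\mathcal F}L^1_s$), if $f\in X$ and $g\in I(E)$ then $fg\in X$ and for $x\in E$ we have $(fg)(x)=f(x)g(x)=0$, so $fg\in I(E)$; linearity of $I(E)$ is immediate from linearity of evaluation. For closedness, note that every $f\in X$ is continuous on ${\mathbf R}^n$ (again Lemma \ref{basic pro mod}(iv)) and that convergence in $X$ implies uniform convergence, hence pointwise convergence: indeed $X\hookrightarrow M^{\infty,1}_s\hookrightarrow C_0({\mathbf R}^n)$ or one invokes that $\|\cdot\|_{L^\infty}\lesssim\|\cdot\|_X$ directly from the BUPU characterization. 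So if $f_k\to f$ in $X$ with $f_k|_E=0$, then $f(x)=\lim_k f_k(x)=0$ for each $x\in E$, giving $f\in I(E)$.

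Next, $Z(I(E))\supseteq E$ is trivial since every $f\in I(E)$ vanishes on $E$ by definition, so every point of $E$ lies in $\bigcap_{f\in I(E)}f^{-1}(\{0\})$. The reverse inclusion $Z(I(E))\subseteq E$ is the substantive point: I must show that for each $x_0\notin E$ there exists $f\in I(E)$ with $f(x_0)\ne 0$. Since $E$ is closed and $x_0\notin E$, choose $r>0$ with $B_{2r}(x_0)\cap E=\emptyset$, and take $f\in C_c^\infty({\mathbf R}^n)$ with $f(x_0)=1$ and ${\rm supp}\,f\subseteq B_r(x_0)$. Then $f\in {\mathcal S}({\mathbf R}^n)\subseteq X$ (the Schwartz space embeds in every modulation space and in ${\mathcal F}L^1_s$), and $f|_E=0$ because ${\rm supp}\,f$ misses $E$; thus $f\in I(E)$ and $f(x_0)=1\ne 0$, so $x_0\notin Z(I(E))$. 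This shows $Z(I(E))\subseteq E$, and combined with the previous inclusion, $Z(I(E))=E$.

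The only mild subtlety — and the step I would be most careful about — is justifying that $X$-convergence forces pointwise convergence, i.e. the embedding $X\hookrightarrow C_0({\mathbf R}^n)$ or at least $X\hookrightarrow C_b({\mathbf R}^n)$ with the sup-norm controlled by $\|\cdot\|_X$; for $M^{p,1}_s$ this follows from Lemma \ref{basic pro mod}(iii)--(iv) via $M^{p,1}_s\hookrightarrow M^{\infty,1}_s\hookrightarrow C({\mathbf R}^n)$, and for ${\mathcal F}L^1_s$ it is immediate from $\|\widehat f\,\|_{L^1}\le\|f\|_{{\mathcal F}L^1_s}$ and the Riemann--Lebesgue lemma. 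Everything else is routine once that continuity/embedding fact is in hand.
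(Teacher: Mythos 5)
Your proposal is correct and follows essentially the same route as the paper: closedness via the embedding $M^{p,1}_s\hookrightarrow M^{\infty,1}\hookrightarrow C_b$ (so norm convergence gives pointwise convergence), and $Z(I(E))\subseteq E$ via a $C_c^\infty$ bump function supported off $E$ with value $1$ at $x_0$. The only difference is that you spell out the ideal property and the ${\mathcal F}L^1_s$ case, which the paper dismisses as clear/similar.
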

\begin{proof}
	We give the proof  only for the case $X= M^{p,1}_s({\mathbf R}^n)$;
	the other case is similar.
	It is clear that $I(E)$ is an ideal in $M^{p,1}_s ({\mathbf R}^n)$.
	To see that $I(E)$ is closed,
	let $f \in M^{p,1}_s ({\mathbf R}^n)$, $\{ f_n \}^\infty_{n=1} \subset I(E)$
	and
	$ \| f_n - f \|_{M^{p,1}_s} \to 0$ $(n \to \infty)$.
	Since
	$$
	\| f_n -f \|_{L^\infty}
	\lesssim
	\| f_n -f \|_{M^{\infty,1}}
	\lesssim \| f_n -f \|_{M^{p,1}}
	\lesssim
	\| f_n - f \|_{M^{p,1}_s},
	$$
	we see that
	$\{ f_n \}^\infty_{n=1}$  converges pointwise to $f$ on ${\mathbf R}^n$.
	Since $f_n |_E =0$, we have $f|_E=0$, and thus $f \in I(E)$.
	Hence  $I(E)$ is closed.
	Next we prove $E=Z(I(E))$.
	Since $E \subset Z(I(E))$ is clear, we
	show $Z(I(E)) \subset E$.
	Suppose
	$x_0  \not\in E$.
	Since $E$ is closed
	and  $C^\infty_c  ({\mathbf R}^n) \subset M^{p,1}_s ({\mathbf R}^n) $,
	there exists
	$f \in M^{p,1}_s ({\mathbf R}^n)$ such that
	$f(x_0 ) =1$ and $f |_E =0$.
	Then $f \in I(E)$ and $f(x_0) \not= 0 $.
	Thus   $x_0 \not\in Z(I(E))$,
	which implies the desired result. 	
\end{proof}

We  also prepare the following lemma,
which also characterises
closed ideals in $M^{p,1}_s ({\mathbf R}^n)$.
We will  call  a subspace $M$ of $M^{p,1}_s({\mathbf R}^n)$
is modulation invariant if
$M_\eta f(x) = e^{ix \eta} f(x) \in M$ whenever
	$f \in M$ and $\eta \in {\mathbf R}^n$.

\begin{lemma}
\label{characterization of ideal in Mp1s}
Given $1 \leq p < \infty$ and  $s \geq 0$. \newline
Then for any closed ideal $I$  in $M^{p,1}_s ({\mathbf R}^n)$, one has$:$\\
$(i)$
If $g \in M^{p^\prime , \infty  }_{-s}  ({\mathbf R}^n) $ satisfies
	\begin{align}
		\label{equiv1}
		\iint_{{\mathbf R}^{2n}}
		V_\phi f(x, \xi)
		\overline{ V_\psi g(x,\xi)  } dx d \xi =0
\quad {\rm for~ all~} f \in I
	\end{align}
for any pair  $\phi, \psi \in {\mathcal S} ({\mathbf R}^n)$
with $\langle \psi, \phi \rangle  \not= 0$, if and only if 
	\begin{align}
		\label{equiv2}
		\int_{{\mathbf R}^n}
		\Big[
		V_\phi f(x,\cdot) * V_{ \overline{\psi}  } \overline{g} (x, \cdot) \Big]
		(\xi) dx =0
\quad {\rm for~ all~} f \in I,  \xi \in {\mathbf R}^n
	\end{align}
for  any pair  $\phi, \psi \in {\mathcal S} ({\mathbf R}^n)$ with $\langle \psi, \phi \rangle  \not= 0$.\\
$(ii)$	Every closed ideal in
$M^{p,1}_s({\mathbf R}^n)$
coincides with a closed modulation invariant
	subspace of $M^{p,1}_s ({\mathbf R}^n)$.
	
\end{lemma}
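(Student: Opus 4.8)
The plan is to reduce the equivalence between closed ideals and closed modulation-invariant subspaces to the duality structure of $M^{p,1}_s$ together with part (i). First I would record that a closed subspace $M \subseteq M^{p,1}_s(\mathbf{R}^n)$ is an ideal if and only if it is modulation invariant \emph{and} invariant under multiplication by $C_c^\infty(\mathbf{R}^n)$ — the forward implication is trivial since $M^{p,1}_s$ is a multiplication algebra containing all $M_\eta$ and all of $C_c^\infty$, so I only need the converse. For the converse, given $f \in M$ and $h \in M^{p,1}_s(\mathbf{R}^n)$, I would approximate $h$ in the $M^{p,1}_s$-norm by a linear combination $\sum_j c_j M_{\eta_j}\chi$ where $\chi \in C_c^\infty$ equals $1$ on a large box — more precisely, I would use a Fejér-type / BUPU approximation on the Fourier side: since $\widehat{f}$ has an $\ell^1$-type decay in the amalgam decomposition, truncating $\widehat{h}$ to a large box and then approximating the truncated piece by trigonometric polynomials times a fixed bump yields $h f = \lim_N \big(\sum_j c_j^{(N)} M_{\eta_j^{(N)}} \chi_N\big) f$ with convergence in $M^{p,1}_s$. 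Each summand $M_{\eta_j}(\chi_N f)$ lies in $M$ by modulation invariance plus $C_c^\infty$-invariance, hence so does $hf$; this shows $M$ is an ideal.

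Next I would handle the substance: why every closed modulation-invariant subspace is automatically $C_c^\infty$-invariant, and conversely package everything via (i). Here I would argue by duality. By Lemma~\ref{basic pro mod}(ii), $\big(M^{p,1}_s\big)' = M^{p',\infty}_{-s}$, and by Hahn--Banach a closed subspace $M$ is determined by its annihilator $M^\perp \subseteq M^{p',\infty}_{-s}$; an element $g \in M^{p',\infty}_{-s}$ annihilates $M$ iff $\langle f, g\rangle = 0$ for all $f \in M$, which via Moyal's identity is exactly condition \eqref{equiv1} (with $\psi = \phi$, say, normalised so $\langle \phi,\phi\rangle \neq 0$). The point of Lemma~\ref{characterization of ideal in Mp1s}(i) is that \eqref{equiv1} is equivalent to \eqref{equiv2}, and \eqref{equiv2} is manifestly invariant under replacing $f$ by $M_\eta f$: modulating $f$ translates $V_\phi f(x,\cdot)$ in the frequency variable by $\eta$ (up to a unimodular factor in $x$ that integrates away under the $dx$ integral in the right normalisation), so the convolution-then-integrate expression transforms covariantly and vanishes for $M_\eta f$ whenever it vanishes for $f$. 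Thus $M$ modulation invariant $\Longleftrightarrow$ $M^\perp$ described by \eqref{equiv2}-type conditions $\Longleftrightarrow$ (by (i)) $M^\perp$ described by \eqref{equiv1} $=$ the annihilator is "closed under the operations making $M$ an ideal". Tracking the logic carefully, modulation invariance of $M$ forces $M$ to be an ideal, and ideals are trivially modulation invariant, giving (ii).

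Concretely, the clean way to organise (ii): $(\Leftarrow)$ every ideal is modulation invariant because $M_\eta f = (M_\eta 1_{\mathrm{loc}})\cdot f$ — more honestly, because $e^{i x\eta}$ is a pointwise multiplier of $M^{p,1}_s$ (it is the Fourier transform of $\delta_\eta$ and $M^{p,1}_s$ is a module over $\mathcal{F}M(\mathbf{R}^n) \supseteq \mathcal{F}L^1_s$, or one checks directly that $V_\phi(M_\eta f)(x,\xi) = V_\phi f(x,\xi-\eta)$ up to phase so $\|M_\eta f\|_{M^{p,1}_s} \lesssim \langle\eta\rangle^s\|f\|_{M^{p,1}_s}$), hence $M_\eta f \in I$ for $f \in I$. $(\Rightarrow)$ Let $M$ be a closed modulation-invariant subspace; its annihilator $M^\perp$ then satisfies condition \eqref{equiv1} for every $f \in M$ and every admissible $g \in M^\perp$, which by part (i) is equivalent to \eqref{equiv2} holding for all $f \in M$; but \eqref{equiv2} for all $f\in M$ and all admissible $\phi,\psi$ is, by the module computation above (convolution on the frequency side against $\widehat{\text{something compactly supported}}$), stable under multiplying $f$ by any element of $M^{p,1}_s$ — i.e.\ $M^\perp \subseteq (hM)^\perp$ for every $h \in M^{p,1}_s$ — so $hM \subseteq M^{\perp\perp} = M$, proving $M$ is an ideal.

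The main obstacle I anticipate is the density/approximation step: showing that multiplication by an arbitrary $h \in M^{p,1}_s$ can be realised as a norm-limit of finite combinations of modulations (or, dually, that \eqref{equiv2} is genuinely preserved under such multiplication rather than just under modulations and $C_c^\infty$-multiplication). This is where Lemma~\ref{f-psif} (existence of approximate units $\phi f \to f$ with $\phi \in C_c^\infty$) does real work: it lets me pass from ``$M^\perp$ is preserved under modulations'' to ``$M^\perp$ is preserved under $C_c^\infty$-multiplication'' — since a $C_c^\infty$ function is a uniform limit of trigonometric polynomials on its support and $\phi h \to h$ — and then the module property of $M^{p,1}_s$ over $\mathcal{F}L^1_s$ (Lemmas~\ref{inclusion Mp1s subset FL1}, \ref{inclusion L_0}, \ref{product estimate}) closes the gap from $C_c^\infty$-multipliers to all of $M^{p,1}_s$. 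The bookkeeping of the unimodular $e^{ix\eta}$ factors when translating $V_\phi f$ in $\xi$, and checking they integrate harmlessly against $dx$ in \eqref{equiv2}, is routine but must be done with the correct normalisation of the window pairing $\langle\psi,\phi\rangle \neq 0$.
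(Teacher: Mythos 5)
Your overall architecture is reasonable and you have correctly located the crux (realising multiplication by $h\in M^{p,1}_s$ as a norm limit of finite combinations of modulations), but there are two genuine gaps, and your route differs substantially from the paper's. First, part $(i)$ is never actually proved: you treat the equivalence of \eqref{equiv1} and \eqref{equiv2} as a reformulation to be quoted. The identifications of \eqref{equiv1} with $\langle f,g\rangle=0$ (Moyal) and of \eqref{equiv2} with $\langle M_{-\xi}f,g\rangle=0$ (a Parseval computation in the frequency variable giving $\int_{{\mathbf R}^n}[V_\phi f(x,\cdot)*V_{\overline{\psi}}\overline{g}(x,\cdot)](\xi)\,dx=(2\pi)^n\langle\psi,\phi\rangle\langle M_{-\xi}f,g\rangle$) must be carried out; more seriously, the implication \eqref{equiv1}$\Rightarrow$\eqref{equiv2} says in annihilator language that $g\perp I$ forces $g\perp M_\xi I$ for every $\xi$, i.e.\ it already contains the nontrivial half of $(ii)$. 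Your justification --- ``$e^{ix\eta}$ is a pointwise multiplier of $M^{p,1}_s$, hence $M_\eta f\in I$'' --- is a non sequitur: a multiplier that does not belong to the algebra has no reason to preserve a given closed ideal. The statement is true, but the proof must pass through the approximate units of Lemma \ref{f-psif}: $M_\eta f=\lim_n(M_\eta\phi_n)f$ with $M_\eta\phi_n\in C^\infty_c\subset M^{p,1}_s$, so each term lies in $I$ and closedness finishes it. You have the ingredients but never assemble them where they are needed. The paper proves this implication by a quite different device: if $g\perp I$ then $\langle f\overline{g},h\rangle=\langle f\overline{h},g\rangle=0$ for all $h\in M^{p,1}_s$ since $f\overline{h}\in I$, hence the product $f\overline{g}$ vanishes as a tempered distribution, and $\langle M_{-\xi}f,g\rangle$ is then read off from $V_{\phi\overline{\psi}}(f\overline{g})=0$.

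Second, in the direction ``closed modulation-invariant $\Rightarrow$ ideal'' the Fej\'er-type density claim is only asserted, and uniform convergence of trigonometric polynomials on ${\rm supp~}\chi_N$, which is what you invoke, does not give convergence in $M^{p,1}_s$: you need the periodisation/homogeneous-Banach-space form of Fej\'er's theorem (positivity and unit mass of the kernel together with norm-continuity of translation) to obtain $\|\chi_N h-\sum_jc^{(N)}_jM_{\eta^{(N)}_j}\chi_N\|_{M^{p,1}_s}\to0$. Note also that for $p>2$ you cannot instead write $hf=(2\pi)^{-n}\int\widehat{h}(\eta)M_\eta f\,d\eta$ as a Bochner integral, since $\widehat{h}$ need not be locally integrable, so the spatial Fej\'er route is forced and is where essentially all of the work of part $(ii)$ lives. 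The paper lightens this burden via Hahn--Banach: it only ever needs to multiply elements of $I$ by a single Schwartz window $\sigma=T_x(\overline{\phi}\psi)$, for which $\widehat{\sigma}\in L^1_s$ and the vector-valued integral $\sigma f=(2\pi)^{-n}\int\widehat{\sigma}(\eta)M_\eta f\,d\eta$ exhibits $\sigma f$ as a limit of combinations of modulates of $f$. Your approach is viable and in some ways more self-contained, but until the approximate-unit step and the Fej\'er step are written out it is not a proof.
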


\begin{proof}
We first assume that  \eqref{equiv2} holds.
Then we have
\begin{align*}
0
&=
\int_{{\mathbf R}^n}
\Big[
V_\phi f(x,\cdot) * V_{ \overline{\psi}  } \overline{g} (x, \cdot) \Big]
(\xi) dx
=
(2 \pi)^n
\langle \psi, \phi \rangle
\langle M_{- \xi} f, g \rangle
\end{align*}
for all $f \in I$,
$\xi \in {\mathbf R}^n$
and $\phi, \psi \in {\mathcal S} ({\mathbf R}^n)$
with $\langle \psi, \phi \rangle  \not= 0$,
and thus $\langle f, g \rangle=0$.
Hence
$$
\iint_{{\mathbf R}^{2n}}
V_\phi f(x, \xi)
\overline{ V_\psi g(x,\xi)  } dx d \xi
= (2 \pi)^{-n}
\langle \psi, \phi \rangle
\langle f, g \rangle =0,
$$
which gives
\eqref{equiv1}.

Next we assume that \eqref{equiv1} holds.
We note that  Lemma \ref{product estimate} implies that
$f \overline{g} \in M^{1, \infty}_{-s} ({\mathbf R}^n)  \hookrightarrow
M^{p^\prime, \infty}_{-s} ({\mathbf R}^n)$
for all $f \in I$.
Then  for  all  $h \in M^{p,1}_s ({\mathbf R}^n)$
and $\phi, \psi \in {\mathcal S}({\mathbf  R}^n)$
with $\langle \psi, \phi \rangle \not= 0$,
we have
\begin{align*}
\langle \psi, \phi  \rangle
\langle f \overline{g} ,h \rangle
&=
\langle \psi, \phi  \rangle
\langle f \overline{h}, g \rangle
=
(2 \pi)^{-n}
\langle
V_{ \phi  } (f \overline{h}) ,
V_\psi g \rangle.
\end{align*}
On the other hand, since $f \in I$ and $I$ is a closed ideal in $M^{p,1}_s({\mathbf R}^n)$,
we see that  $f \overline{h} \in I$.
Therefore, \eqref{equiv1} implies that
$
\langle
V_{ \phi   } (f \overline{h}) ,
V_\psi g \rangle =0,
$
and thus $ \langle f \overline{g},h \rangle =0$.
By the duality, we obtain
$f \overline{g}=0$.
Hence, we obtain for $x, \xi \in {\mathbf R}^n$
that
\begin{align*}
0&=
V_{\phi \overline{\psi}} (f  \overline{g}) (x,\xi) \\
&=
(2 \pi)^{-n}
\int_{{\mathbf R}^n}
V_{\phi} f (x, \xi - \eta)  V_{\overline{\psi}} \overline{g}(x, \eta) d \eta \\
&=
(2 \pi)^{-n}
\int_{{\mathbf R}^n}
V_{\phi} (M_{- \xi} f ) (x,  - \eta)
\overline{ V_\psi g(x, - \eta)   }
 d \eta,
\end{align*}
and thus
$\langle \psi, \phi \rangle \langle M_{- \xi} f, g \rangle =0$,
which implies
\eqref{equiv2}.

Finally, we give the characterization of  closed
ideals in $M^{p,1}_s({\mathbf R}^n)$.
Let $I$ be a closed ideal in $M^{p,1}_s({\mathbf R}^n)$.
We first note  that if  $g \in M^{p^\prime, \infty}_{-s} ({\mathbf R}^n)$
satisfies  \eqref{equiv2},
then
\begin{align}
\label{equiv3}
0
&=\int_{{\mathbf R}^n}
[ V_\phi f (x, \cdot)  * V_{\overline{\psi}} \overline{g}  (x, \cdot) ] (\xi) dx
=
\langle V_\phi ( M_{- \xi} f),  V_\psi  g \rangle
\end{align}
holds for all $f \in I$, $\xi \in {\mathbf R}^n$ and
$\phi, \psi \in {\mathcal S} ({\mathbf R}^n)$
with $\langle \psi, \phi \rangle  \not= 0$.
Suppose that  $I$ is not modulation invariant.
Then there exist $f_0 \in I$ and $\eta_0 \in {\mathbf R}^n$
such that $M_{- \eta_0} f_0 \not\in I $.
On the other hand, we note that
$(M^{p,1}_s ({\mathbf R}^n))^\prime = M^{p^\prime, \infty}_{-s} ({\mathbf R}^n)$,
i.e., for any $\ell \in (M^{p,1}_s ({\mathbf R}^n))^\prime$
and $\phi, \psi \in {\mathcal S}({\mathbf R}^n)$
with $\langle \psi, \phi \rangle=0$, there
exists $g \in M^{p^\prime, \infty}_{-s } ({\mathbf R}^n)$
such that
$$
\ell (f)
= \frac{1}{  (2 \pi)^n \langle \psi, \phi \rangle}
\langle V_\phi f , V_\psi g \rangle.
$$
Therefore, the Hahn-Banach theorem
and the duality imply that there exists
$g_0 \in M^{p^\prime,\infty}_{-s} ({\mathbf R}^n)$
such that
$$
\langle f, g_0 \rangle
=
\frac{1}
{ (2 \pi)^n \langle \psi, \phi \rangle }
\langle V_\phi f , V_\psi g_0  \rangle
 =0
$$
for all $f \in I$, and
\begin{align*}
\langle M_{- \eta} f,  g_0 \rangle
= \frac{1}
{ (2 \pi)^n \langle \psi, \phi \rangle }
\langle V_\phi (M_{- \eta} f_0), V_\psi g_0 \rangle
=1.
\end{align*}
The above two equalities imply that $g_0$ satisfies \eqref{equiv2}
but not \eqref{equiv3}, which  yields a contradiction.

Conversely, we assume that $I$ is a closed
modulation invariant subspace
of $M^{p,1}_s ({\mathbf R}^n)$.
If $I$ is not an ideal in $M^{p,1}_s ({\mathbf R}^n)$,
then there exist $f_0 \in I$
and $h_0 \in M^{p,1}_s ({\mathbf R}^n)$ such that
$f_0 \overline{h_0} \not\in I$.
The Hahn-Banach theorem and the duality imply that
there exists $g_0 \in M^{p^\prime, \infty}_{-s} ({\mathbf R}^n)$
such that
\begin{align}
\label{equiv4}
\langle f,g_0 \rangle
=\frac{1}{ (2 \pi)^n
\langle \psi, \phi \overline{\phi} \rangle}
\langle V_{\phi \overline{\phi}} f, V_\psi g_0 \rangle
=0
\end{align}
for all $f \in I$, and
\begin{align}
\label{equiv5}
\langle f_0 \overline{h_0}, g_0 \rangle
=
\frac{1}{(2 \pi)^n \langle \psi, \phi \overline{\phi} \rangle}
\langle  V_{\phi \overline{\phi}} (f_0 \overline{h_0}), V_\psi g_0
\rangle
=1.
\end{align}
We note that
$$
V_{\phi \overline{\phi}} (f_0 \overline{h_0}) (x,\xi)
= (2 \pi)^{-n}  [V_\phi f_0 (x, \cdot) * V_{ \overline{\phi}  }  \overline{h_0} (x, \cdot)   ] (\xi)
$$
and thus
\begin{align*}
\langle V_{\phi \overline{\phi}} (f_0 \overline{h_0}) , V_\psi g_0 \rangle
&=
(2 \pi)^{-n}
\int_{{\mathbf R}^n}
[ (V_{\phi} f_0 (x,\cdot) * V_{\overline{\phi}}  \overline{h_0}  (x,\cdot) )* V_{ \overline{\psi} }  \overline{g_0}
(x,\cdot)
] (0)dx  \\
&=(2 \pi)^{-n}
\int_{{\mathbf R}^n}
[ (V_{\phi} f_0 (x,\cdot) *
V_{ \overline{\psi} }  \overline{g_0}
  (x,\cdot) )
*  V_{\overline{\phi}}  \overline{h_0}
(x,\cdot)
] (0)dx.
\end{align*}
Moreover
for all $x, \xi \in {\mathbf R}^n$,
we have by the Parseval identity that
\begin{align*}
&[V_\phi f_0 (x, \cdot) * V_{ \overline{\psi}} \overline{g_0} (x, \cdot) ] (\xi) \\
&=
\int_{{\mathbf R}^n}
V_\phi (M_{- \xi} f_0) (x,  \eta) \overline{ V_\psi g_0 (x,  \eta)  } d \eta \\
&=
\int_{{\mathbf R}^n}
{\mathcal F}_{y \to \eta} [ (M_{- \xi} f_0 ) (y)\cdot  \overline{\phi (y-x) } ] (\eta)
\overline{
{\mathcal F}_{y \to \eta } [ g_0(y) \overline{ \psi (y- x) } ] (\eta)
} d \eta \\
&=
(2 \pi)^n
\int_{{\mathbf R}^n}
(M_{- \xi} f_0 ) (y)\cdot  \overline{\phi (y-x) }
\overline{
g_0(y) \overline{ \psi (y- x) }
} d y \\
&=
\langle M_{-\xi} f_0 \cdot  T_x (\overline{ \phi } \psi) , g_0 \rangle.
\end{align*}
Since $I$ is modulation invariant, we have
$M_{- \xi} f_0 \in I $, and thus
$ M_{-\xi} f_0 \cdot  T_x (\overline{ \phi } \psi)  \in I$
by $ T_x (\overline{\phi} \psi ) \in M^{p,1}_s ({\mathbf R}^n) $.
Therefore we have by \eqref{equiv4}
$$
[V_\phi f_0 (x, \cdot) * V_{ \overline{\psi}} \overline{g_0} (x, \cdot) ] (\xi) =0.
$$
Hence
$
\langle  V_{\phi \overline{\phi}} (f_0 \overline{h_0}), V_\psi g_0
\rangle
=0$,
which contradicts
\eqref{equiv5}.

\end{proof}


\section{Wiener-L\'evy
theorem for  $M_s^{p,1}({\mathbf R}^n)$}
\label{Wiener-Levy type theorems}

In this section, we consider the Wiener-L\'evy theorem
for  $M_s^{p,1}({\mathbf R}^n)$
(cf. \cite[Theorem 1.3.1]{Reiter-Stegeman 2000}).
More precisely, we show the following result.

\begin{theorem}
\label{image of analytic function}
Given $1 \leq p < \infty$, $s \geq 0$,
$f \in M^{p,1}_s ({\mathbf R}^n)$ and
a compact subset $K \subset {\mathbf R}^n$.
Suppose that $F$ is an analytic function on a neighborhood of
$f(K) = \{ f(x) ~|~ x \in K \}$ in ${\mathbf C}$.
Then there exists $g \in M^{p,1}_s ({\mathbf R}^n)$ such that
$ g(x) = F(f(x)) $ for all $x \in K$. 	
\end{theorem}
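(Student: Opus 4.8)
The plan is to mimic the classical local Wiener–Lévy argument (as in \cite{Reiter-Stegeman 2000}) but carried out inside the Banach algebra $M^{p,1}_s({\mathbf R}^n)$. First I would reduce to a statement about the spectrum of $f$ on a neighbourhood of $K$. Since $M^{p,1}_s({\mathbf R}^n)$ is a commutative Banach algebra continuously embedded in $C_0({\mathbf R}^n)$ whose Gelfand spectrum is ${\mathbf R}^n$ (with the Gelfand transform being the identity, because modulation spaces are regular and the point evaluations exhaust the maximal ideals), the problem is genuinely local: it suffices to construct, for each point $x_0 \in K$, an element agreeing with $F\circ f$ near $x_0$, and then patch. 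So the real content is a \emph{local} Wiener–Lévy theorem: for $x_0 \in K$ there is a neighbourhood $U$ of $x_0$ and $g \in M^{p,1}_s$ with $g = F\circ f$ on $U$.

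For the local step, write $w = f(x_0)$ and expand $F$ in a power series $F(z) = \sum_{k\ge 0} a_k (z-w)^k$ convergent on a disc $|z-w| < 2\rho$. Using that $M^{p,1}_s({\mathbf R}^n)$ has local units (Lemma \ref{f-psif}, the approximate-unit result), pick $\phi \in C_c^\infty({\mathbf R}^n)$ which is $\equiv 1$ on a neighbourhood $U$ of $x_0$ and supported where $|f(x) - w| < \rho$; set $h = \phi\cdot(f - w) \in M^{p,1}_s({\mathbf R}^n)$, which satisfies $\|h\|_{\infty} < \rho$ but whose $M^{p,1}_s$-norm need not be small. The obstruction is therefore the classical one: the series $\sum_k a_k h^k$ need not converge in the Banach-algebra norm merely because $\|h\|_\infty$ is small. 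I would overcome this exactly as in the Fourier-algebra case: by a second localisation I can make $\|h\|_{M^{p,1}_s}$ itself as small as we wish at the cost of shrinking $U$. Concretely, using the equivalent norm on $M^{p,1}_s$ via a BUPU on the Fourier side together with the translation/dilation estimates in Lemma \ref{basic pro mod} (iv)--(v) and the product estimate Lemma \ref{product estimate}, one shows that multiplying $f-w$ by a smooth bump concentrated on a small ball around $x_0$ produces an element of small $M^{p,1}_s$-norm; here the decay of $\widehat{f-w}$ on the relevant scale, or equivalently the density of $C_c^\infty$ and the approximate-unit Lemma \ref{f-psif}, does the work. Once $\|h\|_{M^{p,1}_s} < 1/C$ with $C$ the algebra constant from Lemma \ref{basic pro mod} (iv), the series $g_0 := \sum_{k\ge 0} a_k h^k$ converges absolutely in $M^{p,1}_s({\mathbf R}^n)$, and since $F$ is analytic its coefficients satisfy $|a_k| \lesssim (2\rho)^{-k}$, so convergence is guaranteed once the radius of the bump is small enough. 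On the smaller neighbourhood $U'$ where $\phi \equiv 1$ one has $h = f - w$ pointwise, hence $g_0 = F(f)$ on $U'$.

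Finally I would globalise by a compactness-and-partition-of-unity argument: cover $K$ by finitely many neighbourhoods $U'_1,\dots,U'_N$ on which we have produced $g_1,\dots,g_N \in M^{p,1}_s({\mathbf R}^n)$ with $g_j = F\circ f$ on $U'_j$; choose $\psi_1,\dots,\psi_N \in C_c^\infty({\mathbf R}^n) \subset M^{p,1}_s({\mathbf R}^n)$ with $\sum_j \psi_j \equiv 1$ on a neighbourhood of $K$ and $\operatorname{supp}\psi_j \subset U'_j$, and set $g = \sum_{j=1}^N \psi_j g_j$. This lies in $M^{p,1}_s({\mathbf R}^n)$ since the space is a multiplication algebra (Lemma \ref{basic pro mod} (iv)) and $C_c^\infty \subset M^{p,1}_s$, and for $x \in K$ we get $g(x) = \sum_j \psi_j(x) F(f(x)) = F(f(x))$, as required. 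I expect the main obstacle to be precisely the norm-smallness reduction in the local step — i.e.\ showing that a sufficiently concentrated smooth multiple of $f - f(x_0)$ has small $M^{p,1}_s$-norm — which is the place where the specific structure of modulation spaces (the BUPU characterisation and the dilation estimate of Lemma \ref{basic pro mod} (v)) must be used rather than soft Banach-algebra generalities.
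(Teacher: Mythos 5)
Your global architecture (a local Wiener--L\'evy step followed by a partition-of-unity patching over the compact set $K$) coincides with the paper's, and the patching step is fine. The gap is in the local step, namely in the claim that ``multiplying $f-w$ by a smooth bump concentrated on a small ball around $x_0$ produces an element of small $M^{p,1}_s$-norm'' while that bump is still $\equiv 1$ on a neighbourhood of $x_0$. Since $f-w$ vanishes at $x_0$, this is precisely the Wiener--Ditkin condition at the point $x_0$, and it is \emph{false} for $s\geq 1$: for such $s$ one has $M^{p,1}_s({\mathbf R}^n)\hookrightarrow C^1$ with $\|\partial_j h\|_{L^\infty}\lesssim \|h\|_{M^{p,1}_s}$ (via $M^{p,1}_s\hookrightarrow M^{\infty,1}_s$ and Bernstein's inequality on the blocks $\varphi(D-k)h$, or via Lemma \ref{inclusion Mp1s subset FL1} when $p\leq 2$), whereas $h=\tau\cdot(f-w)$ satisfies $\nabla h(x_0)=\nabla f(x_0)$ for \emph{every} $\tau$ that is constant $1$ near $x_0$; hence $\|h\|_{M^{p,1}_s}\gtrsim |\nabla f(x_0)|$ cannot be made small whenever $\nabla f(x_0)\neq 0$. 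This is the same obstruction that makes single points fail to be sets of synthesis for $s\geq 1$; note that the paper's Lemma \ref{weighted Bhimani lemma}, which proves exactly your smallness claim, is restricted to $0\leq s<1$ for this reason. Your appeal to Lemma \ref{basic pro mod} $(v)$ does not rescue the step: that estimate is uniform only for the stretching dilations $0<\lambda\leq 1$, and concentrating a bump is the opposite, non-uniformly-bounded, direction; likewise the approximate-unit Lemma \ref{f-psif} controls the tail $f-\phi f$ for a \emph{large} bump $\phi$ and says nothing about a concentrated one.

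The paper circumvents this by reversing the roles of dilation and cut-off. It first shows (Lemmas \ref{Wiener-Levy pre-lemma 1} and \ref{Wiener-Levy pre-lemma 2}) that the \emph{outward-dilated} function $G^\lambda_{x_0}(x)=\bigl(f(x_0+x/\lambda)-f(x_0)\bigr)\tau(x)$, with a \emph{fixed} bump $\tau$, satisfies $\|G^\lambda_{x_0}\|_{M^{p,1}_s}\to 0$ as $\lambda\to\infty$ for all $s\geq 0$ (stretching flattens $f$ near $x_0$, so the derivative obstruction disappears); it then sums the power series $\sum_j c_j (G^\lambda_{x_0})^j$ in the Banach algebra for $\lambda$ large, and only afterwards applies the single substitution $x\mapsto\lambda(x-x_0)$ to the already-summed series. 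That substitution is bounded on $M^{p,1}_s$ for each fixed $\lambda$ with a constant that may be large, which is harmless because only membership, not a norm bound, is needed at that stage. As written, your argument would prove the theorem only for $0\leq s<1$; for the full range $s\geq 0$ you need this extra dilation manoeuvre or some substitute for it.
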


To prove Theorem \ref{image of analytic function},
we prepare the following lemmas.
\begin{lemma}
\label{Wiener-Levy pre-lemma 1}
Let $s \geq 0$,
$f,h \in M^{1,1}_s({\mathbf R}^n)$
and $x_0 \in {\mathbf R}^n$. We set
$$
H^\lambda_{x_0} (x) =
\Big(  f \Big( x_0 + \frac{x}{\lambda}  \Big) - f(x_0) \Big) h(x),\quad
 \mbox{for} \,\,  \lambda > 0.
$$
Then we have
\begin{align}
\label{norm of Hlambda}
\lim_{\lambda \to \infty} \| H^\lambda_{x_0} \|_{M^{1,1}_s} =0.
\end{align}
\end{lemma}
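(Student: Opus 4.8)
The plan is to reduce the claim to a statement about convolution on the Fourier transform side, where the weight $\langle\cdot\rangle^s$ and the amalgam structure can be handled by the submultiplicativity arguments already used in Lemma~\ref{product estimate}. First I would record that, since $f,h\in M^{1,1}_s({\mathbf R}^n)$ and $M^{1,1}_s$ is a multiplication algebra (Lemma~\ref{basic pro mod}(iv) with $p=q=1$, $s\ge 0$), each $H^\lambda_{x_0}$ lies in $M^{1,1}_s$; the content is the quantitative decay as $\lambda\to\infty$. The key observation is that the dilation $f(x_0+x/\lambda)=T_{-\lambda x_0}(f_{1/\lambda})(x)$ after an affine change, and on the Fourier side a dilation by $1/\lambda$ becomes a dilation by $\lambda$ together with a normalizing factor, so that $\widehat{f(x_0+\cdot/\lambda)}(\eta)=\lambda^n e^{i\lambda x_0\eta}\widehat f(\lambda\eta)$. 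Thus $\widehat{H^\lambda_{x_0}}$ is, up to the constant term $-f(x_0)\widehat h$, a convolution of a rescaled copy of $\widehat f$ with $\widehat h$.

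The heart of the argument is then a density-plus-uniform-bound scheme. First I would prove \eqref{norm of Hlambda} for $f\in{\mathcal S}({\mathbf R}^n)$ (keeping $h\in M^{1,1}_s$, or first also $h\in{\mathcal S}$ and then relaxing): for Schwartz $f$ one has the pointwise estimate $|f(x_0+x/\lambda)-f(x_0)|\le C\langle x\rangle\,\lambda^{-1}$ uniformly, or more efficiently one writes $f(x_0+x/\lambda)-f(x_0)=\frac1\lambda\int_0^1 x\cdot(\nabla f)(x_0+tx/\lambda)\,dt$ and estimates the $M^{1,1}_s$-norm of the product with $h$ using the module structure of $M^{1,1}_s$ over $M^{\infty,1}_s$ (Lemma~\ref{product estimate}): the factor $x\mapsto x_j$ is handled because multiplication by a polynomial corresponds to differentiation of $\widehat f$, and $\|(\nabla f)(x_0+t\cdot/\lambda)\|_{M^{\infty,1}_s}$ is controlled uniformly in $t\in[0,1]$, $\lambda\ge1$, $x_0$ in a bounded set — but in fact $x_0$ is fixed here so only uniformity in $t,\lambda$ is needed. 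This gives $\|H^\lambda_{x_0}\|_{M^{1,1}_s}\le C(f,h,x_0)\,\lambda^{-1}\to0$.

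To pass from Schwartz $f$ to general $f\in M^{1,1}_s$, I would use that ${\mathcal S}$ is dense in $M^{1,1}_s$ (Lemma~\ref{basic pro mod}(ii)) together with a \emph{uniform-in-$\lambda$} bound of the form $\|H^\lambda_{x_0}\|_{M^{1,1}_s}\lesssim \|f\|_{M^{1,1}_s}\|h\|_{M^{1,1}_s}$, valid for all $\lambda\ge1$ (say): given such a bound, splitting $f=f_1+f_2$ with $f_1\in{\mathcal S}$ and $\|f_2\|_{M^{1,1}_s}$ small, the $f_1$-part tends to $0$ by the Schwartz case and the $f_2$-part is uniformly small, yielding \eqref{norm of Hlambda}. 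The uniform bound itself follows from Lemma~\ref{product estimate}: $\|H^\lambda_{x_0}\|_{M^{1,1}_s}\lesssim \|f(x_0+\cdot/\lambda)-f(x_0)\|_{M^{\infty,1}_s}\|h\|_{M^{1,1}_s}$, so I need $\|f(x_0+\cdot/\lambda)\|_{M^{\infty,1}_s}\lesssim\|f\|_{M^{\infty,1}_s}\lesssim\|f\|_{M^{1,1}_s}$ uniformly for $0<1/\lambda\le1$, which is exactly the dilation estimate Lemma~\ref{basic pro mod}(v) (applied to the argument $\lambda^{-1}\in(0,1]$) combined with translation invariance of the $M^{\infty,1}_s$-norm and the embedding $M^{1,1}_s\hookrightarrow M^{\infty,1}_s$ from Lemma~\ref{basic pro mod}(iii); the constant term $f(x_0)h$ is handled by $|f(x_0)|\lesssim\|f\|_{L^\infty}\lesssim\|f\|_{M^{1,1}_s}$.

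The main obstacle I anticipate is the interplay between the dilation and the weight: Lemma~\ref{basic pro mod}(v) is stated for $M^{\infty,1}_s$ with $0<\lambda\le1$, so one must be careful that the relevant dilation parameter here is $1/\lambda\le1$ (i.e. a \emph{contraction} of the function, which \emph{expands} its Fourier transform), and that the translation by $x_0$ is harmless since modulation-side translation corresponds to a unimodular modulation on the Fourier side and does not affect the weighted $\ell^1$-type norm up to the fixed constant $\langle x_0\rangle^{|s|}$-free estimate — here $x_0$ being fixed, not uniform, makes this routine. Once the uniform bound is in place the limit is a soft density argument, so the only genuine computation is the Schwartz-case decay estimate via the mean-value/fundamental-theorem-of-calculus representation.
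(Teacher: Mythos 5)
Your overall scheme --- a uniform-in-$\lambda$ bound $\|H^\lambda_{x_0}\|_{M^{1,1}_s}\lesssim\|f\|_{M^{1,1}_s}\|h\|_{M^{1,1}_s}$ for $\lambda\ge 1$ (via Lemma~\ref{product estimate}, translation invariance of $\|\cdot\|_{M^{\infty,1}_s}$ and the dilation estimate of Lemma~\ref{basic pro mod}(v) applied to the parameter $1/\lambda\le 1$), combined with density of ${\mathcal S}$ --- is sound, and it is a genuinely different route from the paper's, which keeps $f,h\in M^{1,1}_s$ general throughout: there one writes $f(x_0)=(2\pi)^{-n}\int\lambda^n e^{i\lambda x_0\eta}\widehat f(\lambda\eta)\,d\eta$ by Fourier inversion, reduces the norm to $\int|\widehat f(\eta)|\,\Psi_\lambda(\eta)\,d\eta$ with $\Psi_\lambda(\eta)$ measuring a translation by $\eta/\lambda$ of $H(x,\xi)=[\widehat h*M_{-x}g_0](\xi)$, and concludes by continuity of translation and dominated convergence (using $M^{1,1}_s\hookrightarrow{\mathcal F}L^1_s$), with no rate. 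However, your key quantitative step --- the claim $\|H^\lambda_{x_0}\|_{M^{1,1}_s}\le C(f,h,x_0)\lambda^{-1}$ for $f\in{\mathcal S}$ and \emph{general} $h\in M^{1,1}_s$ --- has a genuine gap. In the representation $H^\lambda_{x_0}(x)=\frac1\lambda\sum_j\int_0^1 x_j(\partial_jf)(x_0+tx/\lambda)h(x)\,dt$ you must decide where to attach the unbounded factor $x_j$ before invoking the module estimate. Attaching it to the dilated gradient, as you propose, fails: with $\mu=t/\lambda$ one has $x_j(\partial_jf)(x_0+\mu x)=\mu^{-1}G(\mu x)$ for $G(y)=y_j(\partial_jf)(x_0+y)$, so already $\|x_j(\partial_jf)(x_0+\mu\cdot)\|_{L^\infty}=\mu^{-1}\|G\|_{L^\infty}$, hence the $M^{\infty,1}_s$-norm is of exact order $\mu^{-1}=\lambda/t$; after the prefactor $1/\lambda$ you are left with $\int_0^1 t^{-1}\,dt=\infty$. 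Attaching $x_j$ to $h$ instead requires $x_jh\in M^{1,1}_s$, which fails for general $h\in M^{1,1}_s$. Indeed the $O(\lambda^{-1})$ rate itself is false in that generality: if it held (say with $x_0=0$, $f(0)=0$, $\nabla f(0)\ne 0$), then $\lambda f(\cdot/\lambda)h$ would be bounded in $M^{1,1}_s$ while converging to $(\nabla f(0)\cdot x)\,h(x)$ against Schwartz test functions, forcing $x_jh\in M^{1,1}_s$.

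The repair lies inside your own plan: use the bilinearity of $(f,h)\mapsto H^\lambda_{x_0}$ and approximate \emph{both} $f$ and $h$ by Schwartz functions (your parenthetical ``or first also $h\in{\mathcal S}$''), then group as $[(\partial_jf)(x_0+t\cdot/\lambda)]\cdot[x_jh]$. Now $x_jh\in{\mathcal S}\subset M^{1,1}_s$ is a fixed function, and $\|(\partial_jf)(x_0+t\cdot/\lambda)\|_{M^{\infty,1}_s}\le C\|\partial_jf\|_{M^{\infty,1}_s}$ uniformly for $0<t/\lambda\le 1$ by translation invariance and Lemma~\ref{basic pro mod}(v), which yields the $O(\lambda^{-1})$ decay for Schwartz data; the error terms $H^\lambda_{x_0}[f-f_1,h]$ and $H^\lambda_{x_0}[f_1,h-h_1]$ are then absorbed by your uniform bound. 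With that modification the argument closes; as written, the central estimate does not.
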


\begin{proof}
With $g_0(t) = e^{ - \frac{|t|^2}{2} }$ 
we have by \eqref{basic STFT} in Section \ref{STFT} for $\lambda>1$
\begin{align*}
V_{g_0} (H^\lambda_{x_0}) (x,\xi)
&=
(2 \pi)^{-n}
e^{i x \xi} V_{\widehat{g_0}} ( H^\lambda_{x_0} )^\wedge (\xi, -x) \\
&=
(2 \pi)^{-n}   \big(   (H^\lambda_{x_0})^\wedge * M_{-x} (\widehat{g_0})^*    \big) (\xi) \\
&= (2 \pi)^{ - \frac{n}{2} }
  \big(   (H^\lambda_{x_0})^\wedge * M_{-x} \widehat{g_0}    \big) (\xi).
\end{align*}
We note that
\begin{align*}
\Big(
f \Big( x_0 + \frac{ \cdot }{\lambda} \Big)
\Big)^\wedge (\eta)
=
(T_{\lambda x_0 } D_\lambda f )^\wedge (\eta)
=
\lambda^n e^{i \lambda x_0 \eta} \widehat{f} (\lambda \eta)
\end{align*}
with $D_\lambda f (\eta) = f (   \frac{\eta}{\lambda}  )$.
Moreover, since $f , \widehat{f}\in  M^{1,1}({\mathbf R}^n)
\hookrightarrow  L^1({\mathbf R}^n) $,
we have
by the  Fourier  inversion formula that
$$
f(x_0) =
\frac{1}{(2 \pi)^n} \int_{{\mathbf R}^n} \lambda^n
 e^{i \lambda x_0 \eta} \widehat{f} (\lambda \eta) d \eta.
$$
Thus, by putting  $H(x, \xi) =  [ \widehat{h} * M_{-x} g_0 ] (\xi)  $,
we see
that
\begin{align*}
&
\big((H^\lambda_{x_0} )^\wedge * M_{-x} g_0  \big)(\xi) \\
&=
\frac{1}{
(2 \pi)^{n}}
\Big[
\Big( f \Big( x_0 + \frac{\cdot}{\lambda} \Big)\Big)^\wedge
* \widehat{h} * M_{-x} g_0 \Big](\xi)
- f(x_0) [ \widehat{h} * M_{-x} g_0 ] (\xi) \\
&=
\frac{1}{(2 \pi)^{n}}
\int_{{\mathbf R}^n}
\lambda^n e^{i \lambda x_0 \eta} \widehat{f} (\lambda \eta)
\Big(
H(x, \xi - \eta)
- H(x,\xi)
\Big) d \eta \\
&=
\frac{1}{(2 \pi)^n}
\int_{{\mathbf R}^n}
e^{ix_0 \eta} \widehat{f} (\eta)
\Big(
H
\Big( x,  \xi - \frac{\eta}{\lambda} \Big)
-  H(x, \xi) \Big) d \eta.
\end{align*}
Therefore, we have by the Fubini Theorem that
\begin{align*}
&\| H^\lambda_{x_0} \|_{M^{1,1}_s} \\
&
\lesssim
\int_{{\mathbf R}^n}
| \widehat{f} (\eta)   |
\Big(
\iint_{{\mathbf R}^{2n}}
\langle \xi \rangle^s
\Big| H \Big(x, \xi - \frac{\eta}{\lambda} \Big)
-H(x, \xi)
\Big| dx d \xi
\Big) d \eta.
\end{align*}
Moreover,  by the submultiplicity of
$\langle \cdot \rangle^s$ for $s \geq 0$,
one has
$$
\langle \xi \rangle^s
\lesssim
\Big\langle \xi - \frac{\eta}{\lambda}
\Big\rangle^s \Big\langle \frac{\eta}{\lambda}
\Big\rangle^s
\quad
and
\quad
\Big\langle \frac{\eta}{\lambda}
\Big\rangle
\approx
1 + \Big| \frac{\eta}{\lambda} \Big|
\leq
1+ |\eta|
\approx \langle \eta \rangle
$$
for large $\lambda$,
and thus
\begin{align*}
| \widehat{f} (\eta)  |
 \iint_{{\mathbf R}^{2n}}
\langle \xi \rangle^s
\Big| H\Big(x, \xi - \frac{\eta}{\lambda} \Big)
- H(x, \xi)
\Big|  dx d \xi
 \lesssim \langle \eta \rangle^s
| \widehat{f} (\eta)  |
\| h \|_{M^{1,1}_s}.
\end{align*}
Assume now that \eqref{norm of Hlambda}
is not valid, i.e.,
there exists $\varepsilon_0>0$
such that for some sequence $\{ \lambda_n \}^\infty_{n=1}$
with $\lim_{n \to \infty} \lambda_n = \infty$,
one has
$\| H^{\lambda_n}_{x_0} \|_{M^{1,1}_s} \geq \varepsilon_0$.
But this is not possible since the Lebesgue dominated convergence theorem
implies that $\| H^{\lambda_n}_{x_0} \|_{M^{1,1}_s}  \to 0$ $(n \to \infty)$.
This implies  \eqref{norm of Hlambda}.
\end{proof}
\begin{lemma}
\label{Wiener-Levy pre-lemma 2}
Given $1 \leq p < \infty$, $s \geq 0$, $f \in M^{p,1}_s ({\mathbf R}^n)$,
$\tau \in C^\infty_c({\mathbf R}^n)$
and $x_0 \in {\mathbf R}^n$.
For $\lambda >0$, we define
$$
G^\lambda_{x_0} (x)
=
\Big( f \Big( x_0 + \frac{x}{\lambda} \Big) - f(x_0)
\Big) \tau  (x).
$$
Then we have
$$
\lim_{\lambda \to \infty}
\|  G^\lambda_{x_0} \|_{M^{p,1}_s} =0.
$$
\end{lemma}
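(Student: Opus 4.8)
The plan is to carry out a density reduction analogous to the proof of Theorem \ref{f-psif}, bringing matters back to Lemma \ref{Wiener-Levy pre-lemma 1}. Since $s\ge 0$ and $q=1$, Lemma \ref{basic pro mod}$(iv)$ gives $M^{p,1}_s({\mathbf R}^n)\subset C({\mathbf R}^n)$, so $f(x_0)$ and the functions below are well defined pointwise. Fix $\varepsilon>0$. As $p,q<\infty$, Lemma \ref{basic pro mod}$(ii)$ yields $g\in{\mathcal S}({\mathbf R}^n)$ with $\|f-g\|_{M^{p,1}_s}<\varepsilon$; put $u:=f-g$, so $\|u\|_{M^{p,1}_s}<\varepsilon$, and by Lemma \ref{basic pro mod}$(iii)$ (with $p\le\infty$) also $\|u\|_{M^{\infty,1}_s}\lesssim\varepsilon$ and $|u(x_0)|\le\|u\|_{L^\infty}\lesssim\|u\|_{M^{p,1}_s}<\varepsilon$. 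We split
$$
G^\lambda_{x_0}(x)=\Big(g\Big(x_0+\tfrac{x}{\lambda}\Big)-g(x_0)\Big)\tau(x)+\Big(u\Big(x_0+\tfrac{x}{\lambda}\Big)-u(x_0)\Big)\tau(x)=:H^\lambda_{x_0}(x)+R^\lambda_{x_0}(x),
$$
so it suffices to prove $\|H^\lambda_{x_0}\|_{M^{p,1}_s}\to0$ as $\lambda\to\infty$ and $\limsup_{\lambda\to\infty}\|R^\lambda_{x_0}\|_{M^{p,1}_s}\lesssim_{x_0}\varepsilon\,\|\tau\|_{M^{p,1}_s}$ with an implied constant independent of $\varepsilon$; letting $\varepsilon\downarrow 0$ then finishes the proof.

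Since $g\in{\mathcal S}\subset M^{1,1}_s$ and $\tau\in C^\infty_c\subset M^{1,1}_s$, Lemma \ref{Wiener-Levy pre-lemma 1} applied with $h=\tau$ gives $\|H^\lambda_{x_0}\|_{M^{1,1}_s}\to0$ as $\lambda\to\infty$; by the embedding $M^{1,1}_s\hookrightarrow M^{p,1}_s$ of Lemma \ref{basic pro mod}$(iii)$ we obtain $\|H^\lambda_{x_0}\|_{M^{p,1}_s}\lesssim\|H^\lambda_{x_0}\|_{M^{1,1}_s}\to0$.

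For $R^\lambda_{x_0}=u(x_0+\cdot/\lambda)\,\tau-u(x_0)\,\tau$, the constant piece satisfies $|u(x_0)|\,\|\tau\|_{M^{p,1}_s}\lesssim\varepsilon\,\|\tau\|_{M^{p,1}_s}$. For the remaining piece, using $\tau\in M^{\infty,1}_s$ and the product estimate of Lemma \ref{product estimate} (in the form $\|FG\|_{M^{p,1}_s}\lesssim\|F\|_{M^{\infty,1}_s}\|G\|_{M^{p,1}_s}$, valid here for non-Schwartz $F$ exactly as in the proof of Lemma \ref{inclusion L_0}), we get
$$
\|u(x_0+\cdot/\lambda)\,\tau\|_{M^{p,1}_s}\lesssim\|u(x_0+\cdot/\lambda)\|_{M^{\infty,1}_s}\,\|\tau\|_{M^{p,1}_s}.
$$
Now $u(x_0+x/\lambda)=(T_{-x_0}u)(x/\lambda)$, which for $\lambda\ge1$ is the dilate $(T_{-x_0}u)_{1/\lambda}$ with parameter $1/\lambda\in(0,1]$, so Lemma \ref{basic pro mod}$(v)$ gives $\|u(x_0+\cdot/\lambda)\|_{M^{\infty,1}_s}\le C\,\|T_{-x_0}u\|_{M^{\infty,1}_s}$ with $C$ independent of $\lambda$; since translation is an isometry of $M^{\infty,1}_s$ (it only shifts the first STFT variable and multiplies by a unimodular factor), $\|T_{-x_0}u\|_{M^{\infty,1}_s}=\|u\|_{M^{\infty,1}_s}\lesssim\|u\|_{M^{p,1}_s}<\varepsilon$. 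Hence $\|R^\lambda_{x_0}\|_{M^{p,1}_s}\lesssim_{x_0}\varepsilon\,\|\tau\|_{M^{p,1}_s}$ for all $\lambda\ge1$, as required.

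The only delicate point is the uniform-in-$\lambda$ control of the dilates $u(x_0+\cdot/\lambda)$, which is precisely what Lemma \ref{basic pro mod}$(v)$ supplies; it is also the reason the argument must be routed through the $M^{\infty,1}_s$-module structure of $M^{p,1}_s$ rather than run directly in $M^{p,1}_s$, since for $p<\infty$ uniform boundedness of dilations on $M^{p,1}_s$ is not available. Everything else (density of ${\mathcal S}$, the embeddings of Lemma \ref{basic pro mod}$(iii)$, the product estimate of Lemma \ref{product estimate}) is routine.
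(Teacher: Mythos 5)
Your proof is correct, but it follows a genuinely different route from the paper's. The paper exploits the compact support of $\tau$ directly: once $\lambda$ is large enough that ${\rm supp~}\tau\subset B_\lambda(0)$, the values of $f$ entering $G^\lambda_{x_0}$ all lie in a fixed ball around $x_0$, so one may replace $f$ by $f\psi$ for a cutoff $\psi\in C^\infty_c$ equal to $1$ on $B_2(x_0)$; since $\|f\psi\|_{M^{1,1}_s}\lesssim\|f\|_{M^{\infty,1}_s}\|\psi\|_{M^{1,1}_s}<\infty$ by Lemma \ref{product estimate}, Lemma \ref{Wiener-Levy pre-lemma 1} applies verbatim to $f\psi$ and no $\varepsilon$-argument is needed. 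You instead run a density argument: split $f=g+u$ with $g\in{\mathcal S}$ and $\|u\|_{M^{p,1}_s}<\varepsilon$, apply Lemma \ref{Wiener-Levy pre-lemma 1} to the Schwartz part, and control the remainder \emph{uniformly in $\lambda\ge1$} via the $M^{\infty,1}_s$-module structure together with the dilation bound of Lemma \ref{basic pro mod}$(v)$ and translation invariance. Both arguments are sound; the one genuinely nontrivial ingredient you need that the paper does not (the uniform boundedness of the dilations $x\mapsto x/\lambda$, $\lambda\ge 1$, on $M^{\infty,1}_s$) is available as Lemma \ref{basic pro mod}$(v)$ and is the same tool the paper uses in the proof of Theorem \ref{f-psif}, so your use of it is legitimate. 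The paper's truncation trick is a bit leaner — it reduces directly to the $M^{1,1}_s$ case without an approximation step — whereas your argument has the merit of isolating a reusable quantitative statement, namely that $\sup_{\lambda\ge1}\|(f(x_0+\cdot/\lambda)-f(x_0))\tau\|_{M^{p,1}_s}\lesssim\|f\|_{M^{p,1}_s}\|\tau\|_{M^{p,1}_s}$, from which the limit follows by density from the Schwartz case. Your appeal to the product estimate for non-Schwartz factors is consistent with how the paper itself uses Lemma \ref{product estimate} (e.g.\ in Lemma \ref{inclusion L_0} and in its own proof of this very lemma), so no gap there.
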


\begin{proof}
Suppose that  $\lambda >0$ and ${\rm supp~} \tau \subset B_\lambda (0)$.
Take $\psi \in C^\infty_c ({\mathbf R}^n)$
such that  $\psi (x)=1$ on $B_2(x_0)$.
Since $\psi(x_0)=1$ and
 $\psi (x_0 + \frac{x}{\lambda}) =1$ on ${\rm supp~} \tau$,
we see that
$$
\Big( f \Big( x_0 + \frac{x}{\lambda} \Big) - f(x_0)
\Big) \tau (x)
=
\Big( (f \psi ) \Big( x_0 + \frac{x}{\lambda} \Big) - (f \psi)(x_0)
\Big) \tau (x)
$$
for all $x \in {\mathbf R}^n$.
On the other hand,
Lemma \ref{product estimate}  implies that
$$
\| f \psi \|_{M^{1,1}_s} \lesssim
\| f \|_{M^{\infty,1}_s} \| \psi \|_{M^{1,1}_s}
\lesssim
\| f \|_{M^{p,1}_s} \| \psi \|_{M^{1,1}_s},
$$
and thus $f \psi \in M^{1,1}_s({\mathbf R}^n)$.
Hence, it follows from  Lemma \ref{Wiener-Levy pre-lemma 1} that
\begin{align*}
\| G^\lambda_{x_0}      \|_{M^{p,1}_s}
\lesssim
\| G^\lambda_{x_0}      \|_{M^{1,1}_s}
=
\Big\| \Big( (f  \psi) \Big( x_0 + \frac{\cdot}{\lambda} \Big) - (f \psi)(x_0)
\Big) \tau \Big\|_{M^{1,1}_s}
\to 0
\end{align*}
as $\lambda \to \infty$.
\end{proof}
\begin{lemma}
\label{local image of analytic function}
Let $f$, $K$ and $F$ be as in Theorem \ref{image of analytic function},
and  $x_0 \in K$.
Then there exists $g_{x_0} \in M^{p,1}_s ({\mathbf R}^n)$
such that
$g_{x_0} (x) = F(f(x))$
on some neighborhood of $x_0$.
\end{lemma}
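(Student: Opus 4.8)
The plan is to localise near $x_0$, apply a Taylor/Cauchy power‑series expansion of $F$ around $f(x_0)$, and sum the resulting series in $M^{p,1}_s$. First I would choose a cutoff $\tau\in C^\infty_c({\mathbf R}^n)$ with $\tau\equiv 1$ on a ball $B_{2r}(x_0)$, so that it suffices to produce a function in $M^{p,1}_s$ agreeing with $F\circ f$ on $B_r(x_0)$. Since $F$ is analytic on a neighbourhood of $f(K)$, in particular it is analytic on a disc $\{|w-f(x_0)|<\rho\}$ for some $\rho>0$, so we may write $F(w)=\sum_{k=0}^\infty c_k\,(w-f(x_0))^k$ with $|c_k|\le C\,R^{-k}$ for any $R<\rho$.

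The key device is the dilation trick already prepared in Lemma \ref{Wiener-Levy pre-lemma 2}. For $\lambda>0$ set $G^\lambda_{x_0}(x)=\bigl(f(x_0+\tfrac{x}{\lambda})-f(x_0)\bigr)\tau(x)$; by that lemma $\|G^\lambda_{x_0}\|_{M^{p,1}_s}\to 0$ as $\lambda\to\infty$. Also choose $\tau_1\in C^\infty_c$ with $\tau_1\equiv 1$ on $\operatorname{supp}\tau$, and note $\|\tau_1\|_{M^{p,1}_s}<\infty$. Fix $\lambda$ so large that $\|G^\lambda_{x_0}\|_{M^{p,1}_s}\le \min\{R/2,\;1/(2c\|\tau_1\|_{M^{p,1}_s})\}$, where $c$ is the algebra constant from Lemma \ref{basic pro mod}(iv). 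Then consider the candidate
\begin{equation*}
g^\lambda(x)=\sum_{k=0}^\infty c_k\,\bigl(G^\lambda_{x_0}(x)\bigr)^k\,\tau_1(x).
\end{equation*}
Using submultiplicativity, $\|(G^\lambda_{x_0})^k\,\tau_1\|_{M^{p,1}_s}\le c^{k}\,\|G^\lambda_{x_0}\|_{M^{p,1}_s}^{k}\,\|\tau_1\|_{M^{p,1}_s}$ (absorbing the extra factor $c$ at each step), and with the choice of $\lambda$ this is dominated by $C\,R^{-k}(R/2)^k\,2^{-k}$-type bounds; more carefully one arranges $\|(cG^\lambda_{x_0})^k\,\tau_1\|_{M^{p,1}_s}\le \|\tau_1\|_{M^{p,1}_s}\,(c\|G^\lambda_{x_0}\|_{M^{p,1}_s})^{k}$ with $c\|G^\lambda_{x_0}\|_{M^{p,1}_s}<1/2$ and $|c_k|\le CR^{-k}$, so the series converges absolutely in $M^{p,1}_s({\mathbf R}^n)$, which is a Banach space; hence $g^\lambda\in M^{p,1}_s({\mathbf R}^n)$.

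It remains to check the pointwise identity on a neighbourhood of $x_0$. We have $G^\lambda_{x_0}(\lambda(x-x_0)+\cdots)$—more precisely, rescale: for $x$ near $x_0$, $G^\lambda_{x_0}(\lambda(x-x_0))=\bigl(f(x)-f(x_0)\bigr)\tau(\lambda(x-x_0))$, but it is cleaner to define everything in the original variable. Rather, set $g_{x_0}(x)=\sum_{k}c_k\bigl((f(x)-f(x_0))\,\widetilde\tau(x)\bigr)^k\,\tau_1(x)$ where $\widetilde\tau(x)=\tau(\lambda(x-x_0))$; on the ball where $\widetilde\tau\equiv\tau_1\equiv 1$ and $|f(x)-f(x_0)|<R<\rho$ (possible by continuity of $f$, shrinking $r$), this sums to $F(f(x))$. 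The membership $g_{x_0}\in M^{p,1}_s$ follows as above since $x\mapsto(f(x)-f(x_0))\widetilde\tau(x)$ is just a translate/dilate of $G^\lambda_{x_0}$ up to the harmless constant, and dilation by Lemma \ref{basic pro mod}(v) (together with translation invariance of the $M^{p,1}_s$ norm up to constants) keeps the small‑norm estimate. The main obstacle is bookkeeping: making sure the algebra constant $c\ge 1$ does not spoil geometric convergence — this is handled by choosing $\lambda$ large enough that $c\,\|G^\lambda_{x_0}\|_{M^{p,1}_s}$ is strictly less than the radius of convergence (after absorbing $c$), which is exactly what Lemma \ref{Wiener-Levy pre-lemma 2} permits.
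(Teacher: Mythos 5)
Your proposal is correct and follows essentially the same route as the paper: expand $F$ in a power series around $f(x_0)$, use Lemma \ref{Wiener-Levy pre-lemma 2} to make the localized increment $\bigl(f(x_0+\tfrac{\cdot}{\lambda})-f(x_0)\bigr)\tau$ small enough in $M^{p,1}_s$ that the series converges geometrically in the Banach algebra, and then undo the dilation to return to the original variable. The only quibble is your citation of Lemma \ref{basic pro mod}(v) for the final rescaling (that statement covers $0<\lambda\le 1$ in $M^{\infty,1}_s$); what is actually needed is boundedness of a fixed dilation on $M^{p,1}_s$, which is standard and is the same fact the paper implicitly uses.
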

\begin{proof}
Set $z_0 = f(x_0)$. Since
$F$ is analytic at $z_0$, there exists
$0< \varepsilon_0 <1$ such that
$F(z)$ has the power series expansion
$$
F(z) = F(z_0) +
\sum^\infty_{j=1} c_j (z-z_0)^j
\quad (|z-z_0| < \varepsilon_0).
$$
Let $\tau \in C_c^\infty ({\mathbf R}^n)$ be a function
which is $1$ near the origin.
Then it follows from Lemma \ref{Wiener-Levy pre-lemma 2}
that there exists $\lambda_{x_0} >0$
 such that
$\|  ( f (x_0 + \frac{\cdot}{\lambda_{x_0}} ) -f(x_0)) \tau  \|_{M^{p,1}_s} < \varepsilon_0 /c$,
where the constant $c$ is as
in Lemma \ref{basic pro mod} $(iv)$.
Thus we see that
$$
b_{x_0}(x)
= F(f(x_0)) \tau (x) +
\sum^\infty_{j=1}  c_j
\Big(  \Big( f  \Big( x_0 + \frac{x}{\lambda_{x_0}} \Big) -f(x_0)   \Big)  \tau(x)  \Big)^j
$$
converges in $M^{p,1}_s ({\mathbf R}^n)$.
Hence
\begin{align*}
g_{x_0} (x) &= b_{x_0} ( \lambda_{x_0} (x-x_0) )  \\
&=
 F(f(x_0)) \tau_{x_0} (x) +
\sum^\infty_{j=1}  c_j
\big(  \big( f  ( x ) -f(x_0)   \big)  \tau_{x_0}(x)  \big)^j
\end{align*}
is in $ M^{p,1}_s ({\mathbf R}^n)$,
where we denote $\tau_{x_0} (x)=  \tau ( \lambda_{x_0} (x-x_0))$.
Moreover, since $g_{x_0}(x) = F(f(x))$
in some neighborhood of $x_0$, we obtain the desired result.
\end{proof}

Now we prove Theorem \ref{image of analytic function}
by extending the local result above to the compact set $K$.
\begin{proof}[The proof of Theorem \ref{image of analytic function}]
We first note that
by Lemma  \ref{local image of analytic function},
we have for each $x_0 \in K$,
there exist $\tau_{x_0} \in C^\infty_c ({\mathbf R}^n)$
and a neighborhood $U_{x_0}$ of $x_0$
such that $g_{x_0}$ defined by
$$
g_{x_0}(x)
= F(f(x_0)) \tau_{x_0} (x) +
\sum^\infty_{j=1}  c_j
\big(( f (x) -f(x_0) )  \tau_{x_0} (x)  \big)^j
$$
satisfies $g_{x_0}(x) = F(f (x) )$ on $U_{x_0}$.
Since
$K \subset \bigcup_{x_0 \in K} U_{x_0}$
and $K$ is compact,
there exist
$\{ x_j \}^N_{j=1} \subset K$
such that the
corresponding neighborhoods $\{ U_{x_j}  \}_{j=1}^N$
cover $K$.
Now we  set
$h_1(x) = \tau_{x_1} (x) $ and
$$
h_j (x) = \tau_{x_j} (x)  (1- \tau_{x_1} (x)) \cdots (1- \tau_{x_{j-1}} (x))
\quad
(j=2,3, \cdots, N).
$$
Since $\tau_{x_j}(x) =1$ and $g_{x_j}(x) = F(f (x))$ on $U_{x_j}$
$(j=1, \cdots,N)$, it is easily see that
$\sum^N_{j=1} h_j (x)=1$ on $K$.
Therefore $g \in M^{p,1}_s ({\mathbf R}^n)$
defined by
 $g(x)= \sum^N_{j=1} h_j  (x) g_{x_j} (x)$
satisfies
\begin{align*}
g(x) =
\sum^N_{j=1} h_j(x) F(f(x))
= F(f(x)) \sum^N_{j=1} h_j(x)
= F(f(x)).
\end{align*}
for all $x \in K$, which yields the desired result.
\end{proof}

Under a mild extra condition on $f$ respectively
$F$, one can extend the local result to the
following global result.
\begin{theorem}
\label{image of analytic function 2}
Let $1 \leq p < \infty$, $s \geq 0$,
$f \in M^{p,1}_s({\mathbf R}^n)$ and
$F$ be analytic on an open  neighborhood of
$f({\mathbf R}^n) \cup \{ 0 \}$ with $F(0)=0$.
Then there exists $g \in M^{p,1}_s({\mathbf R}^n)$
such that $g(x)= F(f(x))$.
\end{theorem}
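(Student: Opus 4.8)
The plan is to bootstrap from the compact case, Theorem~\ref{image of analytic function}, using the approximate units supplied by Lemma~\ref{f-psif} to control the behaviour ``at infinity'', and to glue the two pieces with a smooth cut-off. First, since $F$ is analytic at $0$ with $F(0)=0$, I would write $F(z)=\sum_{j\ge1}c_jz^j$ with some radius of convergence $\rho>0$; let $c\ge1$ be the multiplication constant of Lemma~\ref{basic pro mod}(iv) for $M^{p,1}_s(\mathbf{R}^n)$ and let $C_1$ be the norm of the continuous embedding $M^{p,1}_s(\mathbf{R}^n)\hookrightarrow L^\infty(\mathbf{R}^n)$ (used already in the proof of Lemma~\ref{Basic lemma 1 closed ideal}). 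By Lemma~\ref{f-psif} I would pick $\phi_0\in C_c^\infty(\mathbf{R}^n)$ with $\|f-\phi_0f\|_{M^{p,1}_s}<\min\{\rho/(2c),\,\rho/(2C_1)\}$ and set $K=\operatorname{supp}\phi_0$. Note that $(1-\phi_0)f=f-\phi_0f\in M^{p,1}_s(\mathbf{R}^n)$ (as $\phi_0\in C_c^\infty\subset M^{p,1}_s$ and the latter is a multiplication algebra), with $\|(1-\phi_0)f\|_{M^{p,1}_s}<\rho/(2c)$ and $\|(1-\phi_0)f\|_{L^\infty}<\rho$.

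Next I would treat the ``far'' piece $g_2:=\sum_{j\ge1}c_j\big((1-\phi_0)f\big)^j$. Since $\big\|\big((1-\phi_0)f\big)^j\big\|_{M^{p,1}_s}\le c^{j-1}\|(1-\phi_0)f\|_{M^{p,1}_s}^j<c^{-1}(\rho/2)^j$ and $\rho/2<\rho$, this series converges absolutely in $M^{p,1}_s(\mathbf{R}^n)$, hence also uniformly; therefore for every $x\in\mathbf{R}^n$ its value equals $\sum_{j\ge1}c_j\big((1-\phi_0(x))f(x)\big)^j=F\big((1-\phi_0(x))f(x)\big)$, the last identity being legitimate because $|(1-\phi_0(x))f(x)|\le\|(1-\phi_0)f\|_{L^\infty}<\rho$. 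In particular, for $x\notin K$ one has $\phi_0(x)=0$, so $g_2(x)=F(f(x))$.

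For the ``near'' piece I would choose $\chi\in C_c^\infty(\mathbf{R}^n)$ with $0\le\chi\le1$ and $\chi\equiv1$ on $K$, put $K'=\operatorname{supp}\chi$ (a compact set containing $K$), and apply Theorem~\ref{image of analytic function} to $K'$ — legitimately, since $f(K')\subset f(\mathbf{R}^n)$ lies in the domain of analyticity of $F$ — obtaining $g_1\in M^{p,1}_s(\mathbf{R}^n)$ with $g_1(x)=F(f(x))$ for all $x\in K'$. Then I would set
$$g:=\chi g_1+(1-\chi)g_2=g_2+\chi(g_1-g_2).$$
Since $g_1,g_2\in M^{p,1}_s(\mathbf{R}^n)$ and $\chi\in C_c^\infty\subset M^{p,1}_s(\mathbf{R}^n)$, the multiplication-algebra property gives $g\in M^{p,1}_s(\mathbf{R}^n)$. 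Finally I would verify $g=F\circ f$ pointwise by cases: on $K$ one has $\chi=1$ and $g_1=F\circ f$, so $g=F\circ f$; on $K'\setminus K$ both $g_1=F\circ f$ (Theorem~\ref{image of analytic function} on $K'$) and $g_2=F\circ f$ (since $x\notin K$), so $g=\chi\,F\circ f+(1-\chi)F\circ f=F\circ f$; and on $\mathbf{R}^n\setminus K'$ one has $\chi=0$ and $g_2=F\circ f$ (again $x\notin K$), so $g=F\circ f$. Hence $g(x)=F(f(x))$ for all $x\in\mathbf{R}^n$.

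The only point requiring care is the order of the constructions: one must fix $\phi_0$ (hence $K$) \emph{first}, and only then invoke Theorem~\ref{image of analytic function} on a compact set $K'$ \emph{strictly larger} than $\operatorname{supp}\phi_0$. This is what guarantees that on the transition region $K'\setminus K$ the two building blocks $g_1$ and $g_2$ already coincide with $F\circ f$, so the smooth interpolation $\chi g_1+(1-\chi)g_2$ introduces no mismatch there; the rest is routine bookkeeping with the algebra constant. (The hypothesis $F(0)=0$ is used precisely so that the series defining $g_2$ has no constant term — a nonzero constant does not belong to $M^{p,1}_s(\mathbf{R}^n)$ — and it is in fact necessary, since $M^{p,1}_s(\mathbf{R}^n)\subset C_0(\mathbf{R}^n)$ forces $F\circ f$ to vanish at infinity.)
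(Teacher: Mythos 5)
Your proposal is correct and follows essentially the same route as the paper's own proof: expand $F$ around $0$, use Lemma \ref{f-psif} to produce $\phi_0$ so that the power series in $(1-\phi_0)f$ converges in $M^{p,1}_s({\mathbf R}^n)$ and represents $F\circ f$ off $\operatorname{supp}\phi_0$, invoke Theorem \ref{image of analytic function} on a compact set containing $\operatorname{supp}\phi_0$, and glue with a cut-off equal to $1$ on $\operatorname{supp}\phi_0$, checking the three regions. Your version is slightly more careful than the paper's about separately controlling the $M^{p,1}_s$-norm (for convergence of the series) and the $L^\infty$-norm (so that the pointwise values stay in the disc of convergence), but the argument is the same.
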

\begin{proof}
Since $F$ is analytic on a neighborhood of $0$ with $F(0)=0$,
there exists $\varepsilon_0>0$ such that
$F(z)$ has the power series representation
$$
F(z) = \sum^\infty_{j=1} c_j z^j
\quad
(|z| < \varepsilon_0).
$$
It follows from  Theorem \ref{f-psif}
that for any $\varepsilon$ with $0< \varepsilon < \varepsilon_0$,
there exists $\phi \in C^\infty_c({\mathbf R}^n)$ such that
$$
\| f -\phi f \|_{L^\infty}
\lesssim \| f - \phi f \|_{M^{p,1}_s}< \varepsilon / c,
$$
where the constant $c$ is as
in Lemma \ref{basic pro mod} $(iv)$.
Now we set
$$
g_0(x) = \sum^\infty_{j=1} c_j \big( f (x) - \phi (x) f (x)  \big)^j.
$$
Then we have $g_0 \in M^{p,1}_s({\mathbf R}^n)$ and
$g_0(x) = F( f(x) - \phi(x) f(x))   =F(f(x))$ $(x \not\in {\rm supp~}\phi)$.
On the other hand, let $\tau_0 \in C^\infty_c({\mathbf R}^n)$ be
such that $\tau_0 (x) =1$
 on ${\rm supp~} \phi$.
Then it follow from Theorem \ref{image of analytic function}
that there exists exists $g_1 \in M^{p,1}_s({\mathbf R}^n)$
such that
$g_1(x) = F(f(x))$ on  ${\rm supp~} \tau_0$.
Now we set
$$
g(x)=  (1- \tau_0 (x))g_0 (x) + \tau_0 (x)  g_1 (x).
$$
We note that $g \in M^{p,1}_s({\mathbf R}^n)$.
If $x \in {\rm supp~} \phi$, then
$\tau_0(x)=1$ and $g_1(x) =F(f(x))$,
and thus $g(x) =F(f(x))$.
Moreover,  if $x  \in {\rm supp~} \tau_0 \setminus   {\rm supp~}\phi$,
then  $g_0(x)  = F( f(x) - \phi (x) f(x)) = F(f(x))$ and $g_1(x) = F(f(x))$, and thus
$g(x) = F(f(x))$. Finally, if $x \not\in {\rm supp~} \tau_0$, then
$\tau_0 (x)=0$ and $g_0 (x) = F(f(x))$, and thus
$g(x) = F(f(x))$.
Hence we obtain the desired result.
\end{proof}

As an application  of Theorem \ref{image of analytic function},
we obtain the following version of
Wiener's general Tauberian theorem for $M^{p,1}_s({\mathbf R}^n)$
(cf. \cite[Theorem 1.4.1]{Reiter-Stegeman 2000},
\cite[Ch VIII. 6.4]{Katznelson}).
\begin{theorem}
\label{ideal generated by f}
Let $1 \leq p < \infty$, $s \geq 0$,
$f \in M^{p,1}_s({\mathbf R}^n)$
and  $I$ be a closed ideal in $M^{p,1}_s({\mathbf R}^n)$.
If $I$ is generated by one function $f$ in $M^{p,1}_s ({\mathbf R}^n)$,
then we have:  $I  =M^{p,1}_s({\mathbf R}^n) $ if and only if
$f(x) \not= 0$ $(x \in {\mathbf R}^n)$.
\end{theorem}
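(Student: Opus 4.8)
The plan is to treat the two implications separately: the ``only if'' part is elementary, while the ``if'' part reduces, via the approximate units of Theorem \ref{f-psif}, to a local inversion of $f$ that is precisely what the Wiener--L\'evy Theorem \ref{image of analytic function} supplies.

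For the ``only if'' direction I would argue by contraposition. Suppose $f(x_0) = 0$ for some $x_0 \in \mathbf{R}^n$. Recalling that the closed ideal generated by $f$ is the closure in $M^{p,1}_s(\mathbf{R}^n)$ of the set $\{hf : h \in M^{p,1}_s(\mathbf{R}^n)\}$, every such $hf$ vanishes at $x_0$; since $M^{p,1}_s(\mathbf{R}^n) \hookrightarrow M^{\infty,1}(\mathbf{R}^n) \hookrightarrow L^\infty(\mathbf{R}^n)$ (the chain already used in the proof of Lemma \ref{Basic lemma 1 closed ideal}), norm convergence forces uniform, hence pointwise, convergence, so every element of $I$ vanishes at $x_0$. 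As $C_c^\infty(\mathbf{R}^n) \subset M^{p,1}_s(\mathbf{R}^n)$ contains functions not vanishing at $x_0$, this gives $I \neq M^{p,1}_s(\mathbf{R}^n)$.

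For the ``if'' direction, assume $f$ has no zeros; I would show $I$ contains every $g \in M^{p,1}_s(\mathbf{R}^n)$. Fix $g$ and $\varepsilon > 0$. First, by Theorem \ref{f-psif} choose $\phi \in C_c^\infty(\mathbf{R}^n)$ with $\| g - \phi g \|_{M^{p,1}_s} < \varepsilon$, and set $K = \operatorname{supp}\phi$. Next, since $f$ is continuous and nowhere vanishing, $f(K)$ is a compact subset of $\mathbf{C}\setminus\{0\}$ and $z \mapsto 1/z$ is analytic on a neighborhood of it, so Theorem \ref{image of analytic function} yields $h \in M^{p,1}_s(\mathbf{R}^n)$ with $h(x) = 1/f(x)$ for all $x \in K$. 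Then $hf \in M^{p,1}_s(\mathbf{R}^n)$ satisfies $(hf)(x) = 1$ on $K \supseteq \operatorname{supp}\phi$, whence the \emph{exact} identity $\phi \cdot (hf) = \phi$ holds on all of $\mathbf{R}^n$. Consequently $\phi g = (\phi h g) f$ with $\phi h g \in M^{p,1}_s(\mathbf{R}^n)$ by the algebra property (Lemma \ref{basic pro mod}$(iv)$), so $\phi g$ lies in the ideal generated by $f$, hence in $I$. Therefore $\operatorname{dist}(g, I) \le \| g - \phi g \|_{M^{p,1}_s} < \varepsilon$, and since $\varepsilon$ is arbitrary and $I$ is closed, $g \in I$; as $g$ was arbitrary, $I = M^{p,1}_s(\mathbf{R}^n)$.

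I do not anticipate a genuine obstacle: once Theorems \ref{f-psif} and \ref{image of analytic function} are in hand the argument is the usual Wiener-type patching, and both ingredients are already established above. The one point requiring a little care is that one needs the exact identity $\phi\cdot(hf) = \phi$ — so that $\phi g$ sits \emph{inside} the algebraic ideal generated by $f$, not merely near it — which is why $h$ is taken to coincide with $1/f$ on all of $\operatorname{supp}\phi$ rather than only approximately.
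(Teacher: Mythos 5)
Your proposal is correct and follows essentially the same route as the paper: the "only if" part via pointwise evaluation (norm convergence implies pointwise convergence through the embedding into $L^\infty$), and the "if" part by applying the Wiener--L\'evy Theorem \ref{image of analytic function} to $F(z)=1/z$ on a compact set containing $\operatorname{supp}\phi$, so that $\phi g=(\phi h g)f\in I$, and then invoking the approximate units of Theorem \ref{f-psif}. The only cosmetic difference is that the paper first deduces $C_c^\infty(\mathbf{R}^n)\subset I$ and then cites density, whereas you approximate an arbitrary $g$ by $\phi g$ directly.
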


\begin{proof}
Suppose that $I  =M^{p,1}_s({\mathbf R}^n) $.
Since $I$ is the closed ideal generated by $f \in M^{p,1}_s({\mathbf R}^n)$,
we see that $I$ is equal to the closure  in $M^{p,1}_s({\mathbf R}^n)$
of the set
$$
\Big\{ \sum_{j=1}^N \lambda_j \phi_j  f ~\Big|~
\lambda_j \in {\mathbf C}, \
\phi_j
\in M^{p,1}_s ({\mathbf R}^n), \ N \in {\mathbf N}  \Big\}.
$$
If $f(x_0) = 0$ for some $x_0 \in {\mathbf R}^n$,
then we have  $g(x_0)=0$ for all $g \in I$.
Since ${\mathcal S}({\mathbf R}^n) \subset M^{p,1}_s({\mathbf R}^n)$,
this contradicts $I=M^{p,1}_s({\mathbf R}^n)$.

Conversely, suppose that
$f(x) \not= 0$ for all $x \in {\mathbf R}^n$.
Let $K$ be a compact subset of ${\mathbf R}^n$
and $\phi \in C^\infty_c({\mathbf R}^n)$
be such that ${\rm supp~} \phi \subset K$.
Since
$F(z) = \frac{1}{z}$ is analytic on ${\mathbf C} \setminus \{ 0 \}$
and
$f(x) \not= 0$ for all $x \in K$,
it follows from Theorem \ref{image of analytic function}
that
there exists $g \in M^{p,1}_s({\mathbf R}^n)$
such that $f g =1$ on $K$.
Since $f \in I$ and $I$ is an ideal in
$M^{p,1}_s ({\mathbf R}^n)$, we have
$\phi = \phi g \cdot f \in I$.
Moreover, since
$C^\infty_c({\mathbf R}^n) $
is dense in $M^{p,1}_s ({\mathbf R}^n)$
(cf. Lemma \ref{basic pro mod} and Proposition \ref{f-psif}),
we obtain $I= M^{p,1}_s ({\mathbf R}^n)$.
\end{proof}

\begin{remark}
\label{modulation invariance of I}
By Lemma \ref{characterization of ideal in Mp1s},
the ideal
$I$ in Theorem \ref{ideal generated by f}
 is a closed modulation invariant subspace of
$M^{p,1}_s({\mathbf R}^n)$.
Thus  $I$ is equal to  the closure in $M^{p,1}_s({\mathbf R}^n)$
of the set
$$
A=
\Big\{
\sum^N_{j=1} \lambda_j  e^{i \eta_j x} f(x)
~|~ \lambda_j \in {\mathbf C}, \ \eta_j \in {\mathbf R}^n,
\ N \in  {\mathbf N} \Big\}.
$$
\end{remark}

As a corollary of Theorem \ref{ideal generated by f},
we also obtain
a variant of
Wiener's approximation theorem
for the Wiener amalgam space
$W({\mathcal F}L^p, \ell^1_s) ({\mathbf R}^n)$
(cf. \cite[Theorem 1.4.8]{Reiter-Stegeman 2000}).
\begin{corollary}
Let $1 \leq p < \infty$, $s \geq 0$ and
$W({\mathcal F}L^p, \ell^1_s)  ({\mathbf R}^n)$
be the Wiener amalgam space
consisting  of all $\widehat{f}  \in {\mathcal S}^\prime ({\mathbf R}^n)$
such that $f \in M^{p,1}_s({\mathbf R}^n)$
with the norm
$\| \widehat{f} \|_{ W({\mathcal F}L^p, \ell^1_s)} = \| f \|_{M^{p,1}_s} $.
Then for any $\widehat{f} \in W({\mathcal F}L^p, \ell^1_s) ({\mathbf R}^n)$, we have that
$f(x) \not= 0$ for all $x \in {\mathbf R}^n$ if and only if
the set
$$
{\mathcal F}(A)=
\Big\{
\sum^N_{j=1} \lambda_j \widehat{f} (\xi - \eta_j )
~|~ \lambda_j \in {\mathbf C}, \ \eta_j \in {\mathbf R}^n,
\ N \in  {\mathbf N} \Big\}
$$
is dense in $W({\mathcal F}L^p, \ell^1_s)({\mathbf R}^n)$.
\end{corollary}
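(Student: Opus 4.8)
The plan is to recognise this corollary as the verbatim translation of Theorem~\ref{ideal generated by f} to the Fourier transform side, so that essentially no new analytic work is needed. By the very definition of the norm, $\|\widehat{f}\|_{W({\mathcal F}L^p,\ell^1_s)} = \|f\|_{M^{p,1}_s}$, the Fourier transform ${\mathcal F}$ is an isometric isomorphism of $M^{p,1}_s({\mathbf R}^n)$ onto $W({\mathcal F}L^p,\ell^1_s)({\mathbf R}^n)$. Consequently, for any subset $S\subset M^{p,1}_s({\mathbf R}^n)$, the image ${\mathcal F}(S)$ is dense in $W({\mathcal F}L^p,\ell^1_s)({\mathbf R}^n)$ if and only if the closed linear span of $S$ in $M^{p,1}_s({\mathbf R}^n)$ equals $M^{p,1}_s({\mathbf R}^n)$.

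First I would record the elementary identity $(M_\eta h)^\wedge = T_\eta \widehat{h}$, valid for $h\in M^{p,1}_s({\mathbf R}^n)$ and $\eta\in{\mathbf R}^n$, which shows that ${\mathcal F}$ carries the set $A$ of Remark~\ref{modulation invariance of I} precisely onto the set ${\mathcal F}(A)$ appearing in the statement. Next, let $I$ be the closed ideal generated by $f$ in $M^{p,1}_s({\mathbf R}^n)$. By Lemma~\ref{characterization of ideal in Mp1s}, $I$ is a closed modulation invariant subspace, and (as made explicit in Remark~\ref{modulation invariance of I}) it coincides with the closure of $A$ in $M^{p,1}_s({\mathbf R}^n)$. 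Combining these two observations, ${\mathcal F}(A)$ is dense in $W({\mathcal F}L^p,\ell^1_s)({\mathbf R}^n)$ if and only if $\overline{A}=M^{p,1}_s({\mathbf R}^n)$, i.e.\ if and only if $I=M^{p,1}_s({\mathbf R}^n)$.

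Finally, I would invoke Theorem~\ref{ideal generated by f} applied to the closed ideal $I$ generated by the single function $f$: it yields $I=M^{p,1}_s({\mathbf R}^n)$ if and only if $f(x)\neq 0$ for every $x\in{\mathbf R}^n$. Chaining the equivalences gives that ${\mathcal F}(A)$ is dense in $W({\mathcal F}L^p,\ell^1_s)({\mathbf R}^n)$ precisely when $f$ has no zeros on ${\mathbf R}^n$, which is the desired statement. There is no serious obstacle here: the only points requiring care are that ${\mathcal F}$ is indeed a norm-preserving bijection between the two spaces (immediate from the definitions) and that the modulation/translation duality identifies $A$ with ${\mathcal F}(A)$; the normalisation constants in the definition of ${\mathcal F}$ play no role, since they merely rescale by a fixed nonzero factor and hence do not affect density.
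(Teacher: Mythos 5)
Your proposal is correct and follows essentially the same route as the paper: both reduce the statement to Theorem~\ref{ideal generated by f} via the isometric identification $\|\widehat{f}\|_{W({\mathcal F}L^p,\ell^1_s)}=\|f\|_{M^{p,1}_s}$, the identity $(M_\eta h)^\wedge=T_\eta\widehat{h}$, and Remark~\ref{modulation invariance of I} identifying the closed ideal generated by $f$ with the closure of $A$. No gaps.
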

\begin{proof}
Let  $\widehat{f} \in W({\mathcal F}L^p, \ell^1_s) ({\mathbf R}^n) $.
We first assume that $f(x) \not= 0$ for all $x \in {\mathbf R}^n$.
Then Remark \ref{modulation invariance of I} implies
that for any $g \in W({\mathcal F}L^p, \ell^1_s) ({\mathbf R}^n)$ and $\varepsilon>0$,
there exist
$\{ \lambda_j \}^N_{j=1} \subset {\mathbf C}$ and $\{ \eta_j \}^N_{j=1} \subset
{\mathbf R}^n$
such that
$$
\Big\| \widehat{g}   - \sum^N_{j=1} \lambda_j \widehat{f} (\xi - \eta_j)
  \Big\|_{W({\mathcal F}L^p, \ell^1_s)}
= \Big\| g - \sum^{N}_{j=1} \lambda_j e^{i n_j x} f(x) \Big\|_{M^{p,1}_s} < \varepsilon
$$
holds.
Therefore, ${\mathcal F}(A)$ is dense in $W({\mathcal F}L^p, \ell^1_s) ({\mathbf R}^n)$.
Conversely, we  assume that ${\mathcal F}(A)$ is dense in $W({\mathcal F}L^p, \ell^1_s) ({\mathbf R}^n)$.
Then the set  $A$ in Remark \ref{modulation invariance of I}
is dense in $M^{p,1}_s({\mathbf R}^n)$.
Hence Theorem \ref{ideal generated by f} implies that
 $f(x) \not= 0$ for all $x \in {\mathbf R}^n$.
\end{proof}
\begin{remark}
The same principles apply for the case of e.g.
finitely many functions
$f_1, \cdots, f_\ell$
without common zero.
\end{remark}


\section{Set of spectral synthesis
for $M_s^{p,1}({\bf R}^n)$}
\label{Set of spectral synthesis in Mp1s}

In this section, we consider the  set of spectral synthesis
for $M_s^{p,1}({\bf R}^n)$.
Throughout this section, $X$ stands for $M^{p,1}_s({\mathbf R}^n)$
($1 \leq p< \infty$, $s \geq 0$)
or ${\mathcal F}L^1_s ({\mathbf R}^n)$
$(s \geq 0)$.


\begin{definition}
Let $E$ be a closed subset of ${\mathbf R}^n$
and $I(E)$ be the set defined in Lemma
\ref{Basic lemma 1 closed ideal}.
Define $J (E)$ by the closed ideal in $X$
generated by
$$
J_0 (E) =\{ f \in X ~|~ f (x)= 0 ~{\rm in ~ a~neighborhood~ of}~E \}.
$$
Then $E$ is called a {\it set of spectral synthesis} for $X$
if  $I(E) = J(E)$.
\end{definition}

\begin{remark}
\label{translate of spectral synthesis}
We can easily see  that $E$ is a set of spectral synthesis
for $X$ if and only if
$x+ E$ is  a set of spectral synthesis
for $X$ for some/any $x \in {\mathbf R}^n$.
\end{remark}

In the following, we shall prove the following.
\begin{theorem}
\label{equality of the set of spectral synthesis}
Let $1 \leq p \leq 2 $, $s \geq 0$ and $K$ be a compact subset of ${\mathbf R}^n$.
Then $K$ is a set of spectral synthesis for $M^{p,1}_s({\mathbf R}^n)$ if and only
if $K$ is a set of spectral synthesis for ${\mathcal F}L^1_s({\mathbf R}^n)$.
\end{theorem}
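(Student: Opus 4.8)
The plan is to exploit the ``ideal theory for Segal algebras'' of Reiter \cite{Reiter}, via the fact that both $M^{p,1}_s({\mathbf R}^n)$ and ${\mathcal F}L^1_s({\mathbf R}^n)$ are (Fourier images of) Segal algebras sitting densely and continuously inside the Fourier algebra ${\mathcal F}L^1({\mathbf R}^n) = A({\mathbf R}^n)$, and that, by Lemma \ref{inclusion Mp1s subset FL1}, for $1\le p\le 2$ one has the continuous, dense inclusion $M^{p,1}_s({\mathbf R}^n)\hookrightarrow{\mathcal F}L^1_s({\mathbf R}^n)$, with ${\mathcal F}L^1_s$ in turn continuously and densely included in ${\mathcal F}L^1$. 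The general principle (Reiter's theory of relative spectral synthesis for an abstract Segal algebra $S$ over a commutative Banach algebra, here both over $A({\mathbf R}^n)$) says that a closed set $E$ is a set of spectral synthesis for $S$ if and only if it is one for the ambient algebra; here, however, neither $M^{p,1}_s$ nor ${\mathcal F}L^1_s$ need be a Segal subalgebra of the plain Fourier algebra in the weightless sense, so the comparison must be made directly between the two weighted spaces. Since $M^{p,1}_s\hookrightarrow{\mathcal F}L^1_s$ is a dense continuous inclusion of Banach algebras with the same spectrum ${\mathbf R}^n$, the appropriate statement is that $M^{p,1}_s$ is a ``Segal algebra relative to ${\mathcal F}L^1_s$'' in Reiter's sense, and then spectral synthesis of a compact $K$ transfers in both directions.

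\textbf{Step 1 (reduce to comparing ideals).} For a closed set $E\subset{\mathbf R}^n$ write $I_X(E)$ and $J_X(E)$ for the two ideals of Definition~(sets of spectral synthesis), with $X\in\{M^{p,1}_s,{\mathcal F}L^1_s\}$. The key observation is that $J_0(E)$ — functions vanishing near $E$ — is the \emph{same} subset of functions inside either algebra up to the inclusion, and that $I_X(E)$ is determined by $E$ alone. So one must show: (a) $J_{M^{p,1}_s}(E) = I_{M^{p,1}_s}(E)$ implies $J_{{\mathcal F}L^1_s}(E) = I_{{\mathcal F}L^1_s}(E)$, and (b) conversely. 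For direction (a): take $f\in I_{{\mathcal F}L^1_s}(E)$; by Lemma~\ref{approximate units in FL1s} (approximate units in ${\mathcal F}L^1_s$) approximate $f$ in ${\mathcal F}L^1_s$-norm by $\phi f$ with $\phi\in C_c^\infty$; then $\phi f$ has compact support and lies in $({\mathcal F}L^1_s)_c$, hence in $M^{p,1}_s$ by Lemma~\ref{inclusion L_0}, and $\phi f\in I_{M^{p,1}_s}(E)$. By hypothesis $\phi f$ is approximable in $M^{p,1}_s$-norm, hence (via the continuous inclusion into ${\mathcal F}L^1_s$) in ${\mathcal F}L^1_s$-norm, by elements of $J_0(E)\cap M^{p,1}_s\subset J_0(E)\cap{\mathcal F}L^1_s$; combining, $f\in J_{{\mathcal F}L^1_s}(E)$.

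\textbf{Step 2 (the reverse direction).} For (b) assume $K$ is of spectral synthesis for ${\mathcal F}L^1_s$ and let $f\in I_{M^{p,1}_s}(K)$. By Theorem~\ref{f-psif} approximate $f$ in $M^{p,1}_s$-norm by $\phi f$, $\phi\in C_c^\infty$; this reduces to $f$ of compact support, say $\mathrm{supp}\,f\subset B_R(0)$, still vanishing on $K$. Now $f\in{\mathcal F}L^1_s$ and, since $K$ is synthetic there, $f$ is a ${\mathcal F}L^1_s$-limit of a sequence $g_m\in J_0(K)$; by inserting a fixed cutoff $\tau\in C_c^\infty$ with $\tau\equiv 1$ on $B_R(0)$ we may replace $g_m$ by $\tau g_m$, which still vanish near $K$, have uniformly bounded support, and converge to $\tau f=f$ in ${\mathcal F}L^1_s$. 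By Lemma~\ref{product estimate} and Lemma~\ref{inclusion L_0}, on the fixed compact support the ${\mathcal F}L^1_s$-norm dominates the $M^{p,1}_s$-norm up to a constant (the estimate $\|h\|_{M^{p,1}_s}\lesssim\|h\|_{M^{\infty,1}_s}\|\psi\|_{M^{p,1}_s}\lesssim\|h\|_{{\mathcal F}L^1_s}\|\psi\|_{M^{p,1}_s}$ for $h$ supported where $\psi\equiv 1$), so $\tau g_m\to f$ also in $M^{p,1}_s$, giving $f\in J_{M^{p,1}_s}(K)$.

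\textbf{The hard part} is making the localisation in Step~2 rigorous: one needs that multiplication by a fixed compactly supported $\tau$ is bounded on ${\mathcal F}L^1_s$ and that, restricted to functions supported in a fixed ball, it lands continuously in $M^{p,1}_s$ \emph{uniformly} — this is exactly what Lemmas~\ref{product estimate} and~\ref{inclusion L_0} provide, but one must also check that multiplying by $\tau$ does not destroy the property ``vanishes in a neighbourhood of $K$'' (it does not, since $\tau g_m$ vanishes wherever $g_m$ does) and that the approximating sequence can indeed be taken with support in a common ball (use that the synthesis approximants in ${\mathcal F}L^1_s$ can be produced from convolution-type regularisations, or simply multiply Reiter's approximants by $\tau$). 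The role of the restriction $p\le 2$ is precisely that Lemma~\ref{inclusion Mp1s subset FL1} — hence the continuous inclusion $M^{p,1}_s\hookrightarrow{\mathcal F}L^1_s$ — holds only in that range; for $p>2$ the inclusion fails and the two scales of synthetic sets need not coincide. Finally, by Remark~\ref{translate of spectral synthesis} one may freely translate $K$, and the compactness of $K$ is used to guarantee $\mathrm{supp}\,f$ can be enclosed in a ball after the cutoff, and that $J_0(K)$ is nonempty with good elements.
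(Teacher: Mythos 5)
Your proof is correct, but it takes a more direct route than the paper. The paper first establishes the full Reiter-type ideal correspondence: every closed ideal of ${\mathcal F}L^1_s$ is the closure of its compactly supported part (Lemma \ref{characterization of closed ideal in FL1s}), intersection with $M^{p,1}_s$ is injective on closed ideals (Lemma \ref{closed ideal in Mp1s}), and every closed ideal $I_M$ of $M^{p,1}_s$ satisfies $I_M=\overline{I_M}^{{\mathcal F}L^1_s}\cap M^{p,1}_s$ (Proposition \ref{closed ideal in Mp1s 2}); the theorem then follows because this bijection matches the largest ideal $I(K)$ with $I(K)$ and the smallest closed ideal with zero set $K$ (namely $J(K)$, characterized as such via the Wiener--L\'evy/Tauberian results) with its counterpart. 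You instead transfer the synthesis property directly in both directions: from $M^{p,1}_s$ to ${\mathcal F}L^1_s$ via approximate units (Lemma \ref{approximate units in FL1s}) plus the embeddings $M^{p,1}_s\hookrightarrow{\mathcal F}L^1_s$ (Lemma \ref{inclusion Mp1s subset FL1}, which is where $p\le 2$ enters) and $({\mathcal F}L^1_s)_c\hookrightarrow M^{p,1}_s$ (Lemma \ref{inclusion L_0}); and back via Theorem \ref{f-psif} and the key localisation trick of multiplying the ${\mathcal F}L^1_s$-approximants $g_m\in J_0(K)$ by a fixed cutoff $\tau$, so that all differences live in a fixed compact set where the ${\mathcal F}L^1_s$-norm controls the $M^{p,1}_s$-norm with a uniform constant. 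The technical ingredients are the same ones the paper uses inside Proposition \ref{closed ideal in Mp1s 2}, and your observation that $J_0(K)$ is already an ideal (so $J(K)$ is just its closure) makes the direct argument legitimate; what you lose relative to the paper is the general bijection between \emph{all} closed ideals of the two algebras, which is of independent interest, and what you gain is that you never need the characterization of $J(K)$ as the smallest closed ideal with zero set $K$ (hence no appeal to Theorems \ref{image of analytic function} and \ref{image of analytic function 2}). Note also that your argument nowhere uses compactness of $K$ in an essential way, so it in fact yields the equivalence for arbitrary closed sets.
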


\subsection{Technical lemmas}

To prove Theorem \ref{equality of the set of spectral synthesis},
we  use
the ``ideal theory for  Segal algebras''
developed in
Reiter \cite[Ch.6, \S 2]{Reiter}
(see also \cite{Burnham}, \cite{Reiter-Stegeman 2000}).
In the following, we denote by $({\mathcal F}L^1_s)_c$ the space defined in Lemma \ref{inclusion L_0}.

\begin{lemma}
\label{characterization of closed ideal in FL1s}
Let $s \geq 0$ and $I$ be a closed ideal in ${\mathcal F}L^1_s ({\mathbf R}^n)$.
Then $I$ is the closure of $I \cap ({\mathcal F}L^1_s)_c$ in ${\mathcal F}L^1_s({\mathbf R}^n)$.
\end{lemma}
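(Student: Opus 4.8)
The plan is to show that any $f\in I$ can be approximated in the ${\mathcal F}L^1_s$-norm by elements of $I\cap({\mathcal F}L^1_s)_c$, exploiting that ${\mathcal F}L^1_s({\mathbf R}^n)$ is a Segal algebra (with respect to the convolution picture, i.e.\ a Beurling algebra) and invoking the approximate-unit result Lemma \ref{approximate units in FL1s}. The clearly nontrivial inclusion is $I\subseteq\overline{I\cap({\mathcal F}L^1_s)_c}$, since the reverse inclusion $\overline{I\cap({\mathcal F}L^1_s)_c}\subseteq I$ is immediate from $I$ being closed.

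First I would fix $f\in I$ and $\varepsilon>0$. By Lemma \ref{approximate units in FL1s} there is $\phi\in C^\infty_c({\mathbf R}^n)$ with $\|f-\phi f\|_{{\mathcal F}L^1_s}<\varepsilon$. Since $\phi$ has compact support, so does $\phi f$, hence $\phi f\in({\mathcal F}L^1_s)_c$. The one thing that must be checked is that $\phi f$ again lies in $I$: this holds because $I$ is an \emph{ideal} in ${\mathcal F}L^1_s({\mathbf R}^n)$, $\phi\in C^\infty_c({\mathbf R}^n)\subset{\mathcal F}L^1_s({\mathbf R}^n)$, and $f\in I$, so $\phi f=\phi\cdot f\in I$. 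Therefore $\phi f\in I\cap({\mathcal F}L^1_s)_c$ and $\|f-\phi f\|_{{\mathcal F}L^1_s}<\varepsilon$, giving $f\in\overline{I\cap({\mathcal F}L^1_s)_c}$. Letting $f$ range over $I$ yields $I\subseteq\overline{I\cap({\mathcal F}L^1_s)_c}$.

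For the converse, every element of $I\cap({\mathcal F}L^1_s)_c$ lies in $I$, and $I$ is closed, so $\overline{I\cap({\mathcal F}L^1_s)_c}\subseteq\overline{I}=I$. Combining the two inclusions gives $I=\overline{I\cap({\mathcal F}L^1_s)_c}$, which is the assertion.

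The main (and really the only) obstacle is verifying the availability of compactly supported approximate units in the weighted Fourier algebra ${\mathcal F}L^1_s$; but this is exactly the content of Lemma \ref{approximate units in FL1s}, quoted from \cite{Reiter-Stegeman 2000}, so the proof is short once that is in hand. One should simply be careful that $\phi f$ genuinely has compact support and genuinely lies in ${\mathcal F}L^1_s$ — both follow since $C^\infty_c({\mathbf R}^n)\subset{\mathcal F}L^1_s({\mathbf R}^n)$ and ${\mathcal F}L^1_s$ is a multiplication algebra (as recalled just before Lemma \ref{approximate units in FL1s}).
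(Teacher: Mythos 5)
Your proof is correct and is essentially identical to the paper's: both directions are handled the same way, with the key step being that the compactly supported approximate unit $\phi$ from Lemma \ref{approximate units in FL1s} gives $\phi f\in I\cap({\mathcal F}L^1_s)_c$ approximating $f$, while the reverse inclusion follows from $I$ being closed.
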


\begin{proof}
Since $I \cap ({\mathcal F}L^1_s)_c \subset I$ and $I$ is closed in ${\mathcal F}L^1_s ({\mathbf R}^n)$,
 the closure of $I \cap ({\mathcal F}L^1_s)_c$ in ${\mathcal F}L^1_s ({\mathbf R}^n)$ is contained in $I$.
We next show that $I$ is contained in the
closure of $I \cap ({\mathcal F}L^1_s)_c$ in ${\mathcal F}L^1_s ({\mathbf R}^n)$.
Let $f \in I$.
Then we have  by Lemma \ref{approximate units in FL1s}
that for any $\varepsilon >0$, there exists $\phi \in C^\infty_c ({\mathbf R}^n)$
such that $ \| f -  \phi f \|_{{\mathcal F}L^1_s} < \varepsilon $ holds.
Since $I$ is an ideal in ${\mathcal F}L^1_s ({\mathbf R}^n)$, we have
$\phi f \in I \cap ({\mathcal F}L^1_s)_c$.
Hence we obtain the desired result.
\end{proof}

\begin{lemma}
\label{closed ideal in Mp1s}
Let $1\leq p \leq 2 $ and $s \geq 0$.
Suppose that $I$ and $I^\prime$ are closed ideals in
${\mathcal F}L^1_s({\mathbf R}^n)$.
Then we have the following:
\begin{itemize}

\item[$(i)$] $I \cap M^{p,1}_s ({\mathbf R}^n)$ is a
closed ideal in $M^{p,1}_s({\mathbf R}^n)$.

\item[$(ii)$] If $I \cap M^{p,1}_s({\mathbf R}^n)
= I^\prime \cap M^{p,1}_s({\mathbf R}^n)$,
then we have $I=I^\prime$.
\end{itemize}
\end{lemma}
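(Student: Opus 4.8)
The plan is to reduce both parts to the two continuous embeddings already available, namely $M^{p,1}_s({\mathbf R}^n) \hookrightarrow {\mathcal F}L^1_s({\mathbf R}^n)$ from Lemma~\ref{inclusion Mp1s subset FL1} (this is where the hypothesis $p \le 2$ enters) and $({\mathcal F}L^1_s)_c \hookrightarrow M^{p,1}_s({\mathbf R}^n)$ from Lemma~\ref{inclusion L_0}, together with the description of closed ideals of ${\mathcal F}L^1_s({\mathbf R}^n)$ in Lemma~\ref{characterization of closed ideal in FL1s} and the fact that both $M^{p,1}_s({\mathbf R}^n)$ and ${\mathcal F}L^1_s({\mathbf R}^n)$ are multiplication algebras.

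For part $(i)$ I would first argue that $I \cap M^{p,1}_s({\mathbf R}^n)$ is closed in $M^{p,1}_s({\mathbf R}^n)$: a sequence converging in $M^{p,1}_s({\mathbf R}^n)$ also converges in ${\mathcal F}L^1_s({\mathbf R}^n)$ by the embedding, and $I$ is closed there, so the limit stays in $I \cap M^{p,1}_s({\mathbf R}^n)$. To see it is an ideal, take $f \in M^{p,1}_s({\mathbf R}^n)$ and $g \in I \cap M^{p,1}_s({\mathbf R}^n)$; then $fg \in M^{p,1}_s({\mathbf R}^n)$ because $M^{p,1}_s({\mathbf R}^n)$ is a multiplication algebra (Lemma~\ref{basic pro mod}$(iv)$), while $fg \in I$ because $f \in {\mathcal F}L^1_s({\mathbf R}^n)$, $g \in I$, $I$ is an ideal in ${\mathcal F}L^1_s({\mathbf R}^n)$, and ${\mathcal F}L^1_s({\mathbf R}^n)$ is a multiplication algebra; hence $fg \in I \cap M^{p,1}_s({\mathbf R}^n)$.

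For part $(ii)$, by symmetry it suffices to prove $I \subseteq I'$. By Lemma~\ref{characterization of closed ideal in FL1s}, $I$ equals the closure in ${\mathcal F}L^1_s({\mathbf R}^n)$ of $I \cap ({\mathcal F}L^1_s)_c$, so it is enough to check $I \cap ({\mathcal F}L^1_s)_c \subseteq I'$. If $f$ lies in $I \cap ({\mathcal F}L^1_s)_c$, then $f \in M^{p,1}_s({\mathbf R}^n)$ by Lemma~\ref{inclusion L_0}, hence $f \in I \cap M^{p,1}_s({\mathbf R}^n) = I' \cap M^{p,1}_s({\mathbf R}^n) \subseteq I'$. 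Taking the closure in ${\mathcal F}L^1_s({\mathbf R}^n)$ and using that $I'$ is closed yields $I \subseteq I'$; the reverse inclusion follows symmetrically, so $I = I'$.

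I do not expect a genuine obstacle here: the argument is essentially a matter of keeping track of which space each membership or closure is taken in. The one point requiring care is that the ideal property and the closedness in part $(i)$, and the ``closure of the compactly supported part'' in part $(ii)$, are properties of ${\mathcal F}L^1_s({\mathbf R}^n)$ rather than $M^{p,1}_s({\mathbf R}^n)$, so one must verify at each step that the objects produced (the product $fg$, the compactly supported approximants coming from Lemma~\ref{characterization of closed ideal in FL1s}) land back in $M^{p,1}_s({\mathbf R}^n)$, which is exactly what Lemmas~\ref{basic pro mod} and \ref{inclusion L_0} guarantee.
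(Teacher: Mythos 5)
Your proof is correct and follows essentially the same route as the paper: part $(i)$ via the embedding $M^{p,1}_s \hookrightarrow {\mathcal F}L^1_s$ together with the multiplication-algebra properties, and part $(ii)$ by reducing to $I \cap ({\mathcal F}L^1_s)_c = I' \cap ({\mathcal F}L^1_s)_c$ using Lemma~\ref{inclusion L_0} and then invoking Lemma~\ref{characterization of closed ideal in FL1s}. No issues.
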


\begin{proof}
$(i)$ We first prove that $I \cap M^{p,1}_s({\mathbf R}^n)$ is an
ideal in $M^{p,1}_s({\mathbf R}^n)$.
Let $f \in I \cap M^{p,1}_s({\mathbf R}^n)$ and $g \in M^{p,1}_s({\mathbf R}^n)$.
Since $M^{p,1}_s({\mathbf R}^n)$
is a multiplication algebra, we see $fg \in M^{p,1}_s({\mathbf R}^n)$.
Moreover, since $M^{p,1}_s({\mathbf R}^n) \hookrightarrow {\mathcal F}L^1_s({\mathbf R}^n)$
and $I$ is an ideal in ${\mathcal F}L^1_s({\mathbf R}^n)$,
we have $fg \in I$.
Hence we $fg \in I \cap M^{p,1}_s({\mathbf R}^n)$
for any $f \in I \cap M^{p,1}_s({\mathbf R}^n)$ and
$g \in M^{p,1}_s({\mathbf R}^n)$,
which implies the desired result.
We next prove $I \cap M^{p,1}_s({\mathbf R}^n)$ is closed
in $M^{p,1}_s({\mathbf R}^n)$.
Let $f$ be in the closure of $I \cap M^{p,1}_s({\mathbf R}^n)$ in
$M^{p,1}_s({\mathbf R}^n)$.
Then, there exists $\{ f_n \}_{n=1}^\infty \subset I \cap M^{p,1}_s({\mathbf R}^n)$ such that
$\| f_n - f \|_{M^{p,1}_s} \to 0$ $(n \to \infty)$.
We note that
$M^{p,1}_s ({\mathbf R}^n) \hookrightarrow {\mathcal F}L^1_s({\mathbf R}^n)$,
and thus
$\| f_n - f \|_{{\mathcal F}L^1_s} \to 0$ $(n \to \infty)$.
Since
$I$ is closed in ${\mathcal F}L^1_s({\mathbf R}^n)$
and $M^{p,1}_s({\mathbf R}^n)$ is complete,
we obtain $f \in I \cap M^{p,1}_s({\mathbf R}^n) $,
which implies the desired result.

\noindent
$(ii)$
Since
$
I \cap M^{p,1}_s  \cap ({\mathcal F}L^1_s)_c
= I^\prime \cap M^{p,1}_s  \cap ({\mathcal F}L^1_s)_c
$
and $({\mathcal F}L^1_s)_c \hookrightarrow M^{p,1}_s({\mathbf R}^n)$,
we have $I \cap ({\mathcal F}L^1_s)_c = I^\prime \cap ({\mathcal F}L^1_s)_c$.
Thus it follows from Lemma \ref{characterization of closed ideal in FL1s}
that $I= I^\prime$, which implies the desired result.

\end{proof}

\begin{proposition}
\label{closed ideal in Mp1s 2}
Let $1 \leq p \leq 2$ and $s \geq 0$.
Then for any closed ideal $I_M$ in $M^{p,1}_s({\mathbf R}^n)$,
the ideal $I_F$ in
${\mathcal F}L^1_s({\mathbf R}^n)$
defined by the closure of $I_M$ in ${\mathcal F}L^1_s({\mathbf R}^n)$
satisfies
$I_M= I_F \cap M^{p,1}_s({\mathbf R}^n)$.

\end{proposition}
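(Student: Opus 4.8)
The plan is to prove the two inclusions $I_M \subseteq I_F \cap M^{p,1}_s$ and $I_F \cap M^{p,1}_s \subseteq I_M$ separately. The first inclusion is immediate: $I_M$ is contained in both $M^{p,1}_s({\mathbf R}^n)$ and, by the continuous embedding $M^{p,1}_s({\mathbf R}^n) \hookrightarrow {\mathcal F}L^1_s({\mathbf R}^n)$ from Lemma \ref{inclusion Mp1s subset FL1}, in its ${\mathcal F}L^1_s$-closure $I_F$; hence $I_M \subseteq I_F \cap M^{p,1}_s({\mathbf R}^n)$. Note also that $I_F$ is genuinely an ideal in ${\mathcal F}L^1_s({\mathbf R}^n)$: if $g \in {\mathcal F}L^1_s$ and $\phi \in C^\infty_c({\mathbf R}^n)$ then $\phi g \in M^{p,1}_s({\mathbf R}^n)$ by Lemma \ref{inclusion L_0}, so one first approximates a general element of $I_F$ by elements of $I_M$ and multiplies; combined with Lemma \ref{approximate units in FL1s} (which lets us approximate $g$ itself by $\phi g$) one checks that ${\mathcal F}L^1_s \cdot I_F \subseteq I_F$.

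For the reverse inclusion, let $f \in I_F \cap M^{p,1}_s({\mathbf R}^n)$; I must show $f \in I_M$. The key tool is the approximate-units result Theorem \ref{f-psif}: given $\varepsilon > 0$ there exists $\phi \in C^\infty_c({\mathbf R}^n)$ with $\| f - \phi f \|_{M^{p,1}_s} < \varepsilon$. It therefore suffices to show that $\phi f \in I_M$ for every $\phi \in C^\infty_c({\mathbf R}^n)$, since then $f$ is an $M^{p,1}_s$-limit of elements of the closed ideal $I_M$. Now $f \in I_F$ means $f$ is an ${\mathcal F}L^1_s$-limit of a sequence $\{f_k\} \subseteq I_M$. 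Multiplying by the fixed compactly supported smooth $\phi$, we get $\phi f_k \to \phi f$; the point is that this convergence can be upgraded from the ${\mathcal F}L^1_s$-norm to the $M^{p,1}_s$-norm because $\phi$ has compact support. Concretely, $\phi f_k - \phi f = \phi(f_k - f)$ has support in the fixed compact set ${\rm supp}\,\phi$, and by Lemma \ref{product estimate} together with the ${\mathcal F}L^1_s \hookrightarrow M^{p,1}_s$ bound for compactly supported functions (Lemma \ref{inclusion L_0}, applied with a fixed cutoff equal to $1$ on ${\rm supp}\,\phi$) one estimates
\begin{align*}
\| \phi f_k - \phi f \|_{M^{p,1}_s}
= \| \phi(f_k - f) \|_{M^{p,1}_s}
\lesssim \| \phi(f_k - f) \|_{{\mathcal F}L^1_s}
\leq \| \phi \|_{{\mathcal F}L^1_s}\,\| f_k - f \|_{{\mathcal F}L^1_s} \cdot C,
\end{align*}
using that ${\mathcal F}L^1_s({\mathbf R}^n)$ is a multiplication algebra. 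Hence $\phi f_k \to \phi f$ in $M^{p,1}_s({\mathbf R}^n)$. Since each $f_k \in I_M$ and $\phi \in C^\infty_c({\mathbf R}^n) \subseteq M^{p,1}_s({\mathbf R}^n)$, each $\phi f_k$ lies in the ideal $I_M$, and as $I_M$ is closed in $M^{p,1}_s({\mathbf R}^n)$ we conclude $\phi f \in I_M$.

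Putting the two halves together gives $I_M = I_F \cap M^{p,1}_s({\mathbf R}^n)$. The main obstacle is the upgrading step just described: one needs to know that multiplying a compactly supported ${\mathcal F}L^1_s$-Cauchy perturbation by a fixed cutoff yields an $M^{p,1}_s$-Cauchy sequence, and this relies essentially on the restriction $1 \le p \le 2$ (so that $M^{p,1}_s \hookrightarrow {\mathcal F}L^1_s$) together with the local embedding of Lemma \ref{inclusion L_0}, which is what makes the two topologies agree on functions with a common compact support. Everything else is a routine assembly of the approximate-unit theorem and the ideal/closedness bookkeeping already available in the excerpt.
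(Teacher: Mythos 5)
Your proof is correct and follows essentially the same route as the paper: both reduce, via the approximate units of Theorem \ref{f-psif}, to showing $\phi f \in I_M$ for a cutoff $\phi \in C^\infty_c({\mathbf R}^n)$, and both upgrade ${\mathcal F}L^1_s$-convergence to $M^{p,1}_s$-convergence on a fixed compact support by combining Lemma \ref{inclusion L_0} (with one fixed auxiliary cutoff) with the multiplication-algebra property of ${\mathcal F}L^1_s({\mathbf R}^n)$. The only difference is organizational: the paper first identifies $I_F$ with the closure of $I_M\cap({\mathcal F}L^1_s)_c$ and approximates $f$ by compactly supported elements of $I_M$, whereas you multiply arbitrary approximants $f_k\in I_M$ by the cutoff directly.
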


\begin{proof}
Let $I_M$ be a closed ideal in $M^{p,1}_s ({\mathbf R}^n)$
and define  $I_F^\prime$ by the closure of
$I_M \cap ({\mathcal F}L^1_s)_c$
in ${\mathcal F}L^1_s({\mathbf R}^n)$.
Then $I_F^\prime$ is a closed ideal in ${\mathcal F}L^1_s({\mathbf R}^n)$.
To see $I_F^\prime$ is an ideal in  ${\mathcal F}L^1_s({\mathbf R}^n)$,
let $f \in I_F^\prime$ and $g \in {\mathcal F}L^1_s({\mathbf R}^n)$.
Then there exists $\{ f_n \}^\infty_{n=1} \subset I_M \cap ({\mathcal F}L^1_s)_c $
such that
$\| f - f_n \|_{{\mathcal F}L^1_s} \to 0$ $(n \to \infty)$.
Since $f_n \in  ({\mathcal F}L^1_s)_c$, there exists $\psi_n \in C^\infty_c({\mathbf R}^n)$
such that $\psi_n (x)=1$ on ${\rm supp~}f_n$.
Then we have $\psi_n g \in ({\mathcal F}L^1_s)_c \hookrightarrow M^{p,1}_s({\mathbf R}^n)$.
Therefore,
$\psi_n g \cdot f_n \in I_M$,
and thus
$f_n g =  f_n  \cdot  \psi_n g \in I_M \cap ({\mathcal F}L^1_s)_c$.
Moreover, since
\begin{align*}
\| f g  -  f_n g \|_{{\mathcal F}L^1_s}
\lesssim
\| f -f_n \|_{{\mathcal F}L^1_s}  \| g \|_{{\mathcal F}L^1_s}
\to 0 \quad (n \to \infty),
\end{align*}
we have $fg \in I_F^\prime$.
Hence,  $I_F^\prime$ is an ideal in  ${\mathcal F}L^1_s({\mathbf R}^n)$.

Next, we prove that
$I_F$
is equal to $I_F^\prime$.
Let $f \in I_F$.
Then for any $\varepsilon >0$,
there exists $g \in  I_M$ such that
$\| f - g \|_{{\mathcal F}L^1_s} < \varepsilon$.
On the other hand, Theorem \ref{f-psif} implies that
there exists $\phi \in C^\infty_c({\mathbf R}^n)$
such that
$\| g - \phi g \|_{M^{p,1}_s} < \varepsilon$.
Hence we have $\phi g \in I_M \subset M^{p,1}_s({\mathbf R}^n) \hookrightarrow
{\mathcal F}L^1_s({\mathbf R}^n)$.
Hence we obtain $\phi g \in I_M \cap ({\mathcal F}L^1_s)_c$
and
\begin{align*}
\| f -\phi g \|_{{\mathcal F}L^1_s}
& \leq
\| f - g \|_{{\mathcal F}L^1_s}
+ \| \phi g - g \|_{{\mathcal F}L^1_s} \\
&\lesssim
\| f - g \|_{{\mathcal F}L^1_s}
+ \| \phi g - g \|_{ M^{p,1}_s}
\lesssim \varepsilon ,
\end{align*}
which yields $I_F \subset I_F^\prime$.
The reverse inclusion is clear.

Finally, we prove $I_M = I_F \cap M^{p,1}_s({\mathbf R}^n)$.
Since $I_M \subset I_F$ and $I_M \subset M^{p,1}_s({\mathbf R}^n)$,
we see that
$I_M = I_M \cap M^{p,1}_s({\mathbf R}^n) \subset I_F \cap M^{p,1}_s({\mathbf R}^n)$.
On the other hand, let $f \in  I_F \cap M^{p,1}_s({\mathbf R}^n) $ and $\varepsilon>0$.
By Theorem \ref{f-psif}, there exists $\phi \in C^\infty_c({\mathbf R}^n)$
such that
$\| f - \phi f \|_{M^{p,1}_s} < \varepsilon$.
Now we take
$\varphi \in C^\infty_c({\mathbf R}^n) $ with
$\varphi(x) =1$ on ${\rm supp~} \phi$.
Since $f \in I_F= I_F^\prime$,
there exists $h \in I_M \cap ({\mathcal F}L^1_s)_c$
such that
$\| f -h \|_{{\mathcal F}L^1_s} < \frac{\varepsilon  }{ \| \varphi \|_{M^{p,1}_s} \| \phi \|_{{\mathcal F}L^1_s}  }$.
Then we have $\phi h \in I_M$.
Since ${\mathcal F}L^1_s ({\mathbf R}^n)  \hookrightarrow M^{\infty,1}_s({\mathbf R}^n) $
(see the proof of Lemma \ref{inclusion L_0}),
we obtain
\begin{align*}
\| f - \phi h \|_{M^{p,1}_s}
& \leq
\| f - \phi f \|_{M^{p,1}_s} + \| \varphi \phi (f-h) \|_{M^{p,1}_s} \\
&\lesssim \| f - \phi f \|_{M^{p,1}_s} + \|  \varphi \|_{M^{p,1}_s}
\|\phi (f-h) \|_{M^{\infty,1}_s}  \\
&\lesssim \| f - \phi f \|_{M^{p,1}_s}+ \|  \varphi \|_{M^{p,1}_s}
\|\phi (f-h) \|_{{\mathcal F}L^1_s} \\
& \lesssim
\| f - \phi f \|_{M^{p,1}_s} + \| \varphi \|_{M^{p,1}_s} \| \phi \|_{{\mathcal F}L^1_s}
\|  f-h \|_{{\mathcal F}L^1_s}.
\end{align*}
Therefore $f$ is in the closure of $I_M$ in $M^{p,1}_s({\mathbf R}^n)$.
Since $I_M$ is closed in $M^{p,1}_s({\mathbf R}^n)$,
we get the desired result.

\end{proof}

\begin{remark}
Let $I_M$ and $I_M^\prime$ be closed ideals in
$M^{p,1}_s({\mathbf R}^n)$, and
$I_F$ be the closure of $I_M$ in ${\mathcal F}L^1_s ({\mathbf R}^n)$.
If  the closure of $I_M^\prime$  in ${\mathcal F}L^1_s ({\mathbf R}^n)$
is equal to $I_F$, then
Proposition \ref{closed ideal in Mp1s 2} implies that $I_M  = I_M^\prime$.

\end{remark}

Combining Lemma \ref{closed ideal in Mp1s}
and Proposition \ref{closed ideal in Mp1s 2},
we obtain the following result.

\begin{theorem}[The ideal theory for  Segal algebras]
Let $1 \leq p \leq 2$, $s \geq 0$,
${\mathcal I}_F$ be the set of all closed ideals in
${\mathcal F}L^1_s({\mathbf R}^n)$,
and ${\mathcal I}_M$ be the set of all closed ideals in
$M^{p,1}_s({\mathbf R}^n)$.
Define the map $\iota :  {\mathcal I}_F \to {\mathcal I}_M $ by
$\iota(I_F) =  I_F \cap M^{p,1}_s({\mathbf R}^n)$.
Then
$\iota$ is bijective.
More precisely, we have
$$
\iota^{-1} (I_M)
= \overline{I_M}^{\| \cdot  \|_{ {\mathcal F}L^1_s }}
\qquad
(I_M \in {\mathcal I}_M)
$$
and
$$
\iota \big(  \overline{I_M}^{\| \cdot  \|_{ {\mathcal F}L^1_s }} \big) =
\overline{I_M}^{\| \cdot  \|_{ {\mathcal F}L^1_s }} \cap M^{p,1}_s({\mathbf R}^n)
= I_M,
$$
where $\overline{I_M}^{\| \cdot  \|_{ {\mathcal F}L^1_s }}$
denotes the closure of $I_M$ in
${\mathcal F}L^1_s({\mathbf R}^n)$.

\end{theorem}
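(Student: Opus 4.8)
The plan is to deduce the theorem by packaging together Lemma~\ref{closed ideal in Mp1s} and Proposition~\ref{closed ideal in Mp1s 2}, which between them already contain all the analytic content. First I would check that $\iota$ is well defined: for $I_F \in \mathcal{I}_F$, part~$(i)$ of Lemma~\ref{closed ideal in Mp1s} (which uses $M^{p,1}_s({\mathbf R}^n)\hookrightarrow {\mathcal F}L^1_s({\mathbf R}^n)$, valid since $1\le p\le 2$ by Lemma~\ref{inclusion Mp1s subset FL1}, together with the fact that $M^{p,1}_s({\mathbf R}^n)$ is a multiplication algebra) shows that $\iota(I_F)=I_F\cap M^{p,1}_s({\mathbf R}^n)$ is again a closed ideal in $M^{p,1}_s({\mathbf R}^n)$, so $\iota$ indeed maps $\mathcal{I}_F$ into $\mathcal{I}_M$. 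Injectivity of $\iota$ is then immediate from part~$(ii)$ of the same lemma: if $I_F\cap M^{p,1}_s({\mathbf R}^n) = I_F'\cap M^{p,1}_s({\mathbf R}^n)$, then $I_F=I_F'$.

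Next I would prove surjectivity together with the formula for $\iota^{-1}$. Given $I_M\in\mathcal{I}_M$, set $I_F:=\overline{I_M}^{\|\cdot\|_{{\mathcal F}L^1_s}}$. Because $M^{p,1}_s({\mathbf R}^n)\hookrightarrow {\mathcal F}L^1_s({\mathbf R}^n)$, this is a closed subspace of ${\mathcal F}L^1_s({\mathbf R}^n)$, and the argument in the proof of Proposition~\ref{closed ideal in Mp1s 2} — where $I_F$ is identified with the closure of $I_M\cap ({\mathcal F}L^1_s)_c$ and the latter is shown to be an ideal using $({\mathcal F}L^1_s)_c\hookrightarrow M^{p,1}_s({\mathbf R}^n)$ (Lemma~\ref{inclusion L_0}) and Lemmas~\ref{approximate units in FL1s} and~\ref{characterization of closed ideal in FL1s} — shows that $I_F$ is a closed ideal in ${\mathcal F}L^1_s({\mathbf R}^n)$, i.e. $I_F\in\mathcal{I}_F$. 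Proposition~\ref{closed ideal in Mp1s 2} now yields $\iota(I_F)=I_F\cap M^{p,1}_s({\mathbf R}^n)=I_M$, so $\iota$ is onto. Combining this with injectivity gives $\iota^{-1}(I_M)=I_F=\overline{I_M}^{\|\cdot\|_{{\mathcal F}L^1_s}}$, and the two displayed identities in the statement are just this fact written out, since $\iota\big(\overline{I_M}^{\|\cdot\|_{{\mathcal F}L^1_s}}\big) = \overline{I_M}^{\|\cdot\|_{{\mathcal F}L^1_s}}\cap M^{p,1}_s({\mathbf R}^n) = I_M$.

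The only genuinely delicate step is the one already carried out inside Proposition~\ref{closed ideal in Mp1s 2}, namely the inclusion $I_F\cap M^{p,1}_s({\mathbf R}^n)\subseteq I_M$: given $f$ in the left-hand side one must approximate it in the \emph{finer} $M^{p,1}_s$-norm by elements of $I_M$, and the mechanism is to localise $f$ by an approximate unit $\phi\in C_c^\infty({\mathbf R}^n)$ from Theorem~\ref{f-psif}, approximate $f$ in the \emph{coarser} ${\mathcal F}L^1_s$-norm by some $h\in I_M\cap({\mathcal F}L^1_s)_c$, and then pass back up to the $M^{p,1}_s$-norm using a cutoff $\varphi\equiv 1$ on $\operatorname{supp}\phi$, the embedding ${\mathcal F}L^1_s({\mathbf R}^n)\hookrightarrow M^{\infty,1}_s({\mathbf R}^n)$, and the multiplier estimate $\|\varphi\cdot\phi(f-h)\|_{M^{p,1}_s}\lesssim\|\varphi\|_{M^{p,1}_s}\|\phi(f-h)\|_{M^{\infty,1}_s}$, which controls the finer norm by the coarser one. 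For the present theorem, however, once Lemma~\ref{closed ideal in Mp1s} and Proposition~\ref{closed ideal in Mp1s 2} are in place, nothing beyond this bookkeeping remains, so I do not anticipate any further obstacle.
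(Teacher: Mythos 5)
Your proposal is correct and follows exactly the paper's intended route: the paper states this theorem with no separate proof, presenting it as the direct combination of Lemma~\ref{closed ideal in Mp1s} (well-definedness and injectivity of $\iota$) and Proposition~\ref{closed ideal in Mp1s 2} (surjectivity and the formula for $\iota^{-1}$), which is precisely the bookkeeping you carry out.
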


\subsection{The proof of Theorem \ref{equality of the set of spectral synthesis}}

For a closed subset $K$ of  ${\mathbf R}^n$,
we set $I_F(K)=  \{ f \in {\mathcal F}L^1_s ({\mathbf R}^n) ~|~ f|_K =0 \}$
and $I_M(K)=  \{ f \in M^{p,1}_s ({\mathbf R}^n) ~|~ f|_K =0 \}$.
Moreover,  we define $J_F(K)$ by the
closure
of
$
 \{ f \in {\mathcal F}L^1_s ({\mathbf R}^n) ~|~ f (x) =0 {\rm ~in~ a~neighborhood~of~} K  \}
$
 in ${\mathcal F}L^1_s({\mathbf R}^n)$,
and
$J_M(K)$ by the
closure
of
$
 \{ f \in M^{p,1}_s ({\mathbf R}^n) ~|~ f(x) =0 {\rm ~in~ a~neighborhood~of~} K  \}$
 in $M^{p,1}_s({\mathbf R}^n)$.
We note that
 $J_F(K)$ is the smallest closed ideal $I$ in ${\mathcal F}L^1_s ({\mathbf R}^n)$
such that $\bigcap_{f \in I} f^{-1}(\{0 \}) =K$
(cf. \cite[Proposition 2.4.5]{Reiter-Stegeman 2000}).
Similarly,
$J_M(K)$ is the smallest closed ideal $I$ in $M^{p,1}_s  ({\mathbf R}^n)$
such that $\bigcap_{f \in I} f^{-1}(\{0 \}) =K$
(cf. Theorems \ref{image of analytic function} and \ref{image of analytic function 2}).
Thus Proposition \ref{closed ideal in Mp1s 2} implies
that  $I_F(K) = J_M(K)$ if and only if
$I_F(K) = J_F(K) $.
Therefore, $K$ is a set of spectral synthesis for $M^{p,1}_s({\mathbf R}^n)$
if and only if $K$ is a set of spectral synthesis for ${\mathcal F}L^1_s ({\mathbf R}^n)$.

\subsection{Examples}
\label{example of set of spectral synthesis}

As an application of Theorem \ref{equality of the set of spectral synthesis},
we show some  concrete examples of sets of spectral synthesis for
$M^{p,1}_s({\mathbf R}^n)$.

\begin{example}[cf. {\cite[Theorem 2.7.6]{Reiter-Stegeman 2000}}]
Let $1 \leq p \leq 2$.
Then a circle in ${\mathbf R}^2$ is
a  set of spectral synthesis for $M^{p,1}_s({\mathbf R}^2)$
if $0 \leq s < \frac{1}{2}$, but
not if $s \geq \frac{1}{2}$.

\end{example}

\begin{example}[cf. {\cite[Theorem 2.7.7]{Reiter-Stegeman 2000}}]
Let $1 \leq p \leq 2$
and $s \geq 0$. Then  the sphere $S^{n-1} \subset {\mathbf R}^n$
is not a set of spectral synthesis for
$M^{p,1}_s({\mathbf R}^n)$ if $n \geq 3$.

\end{example}

\begin{example}[cf. {\cite[Theorem 2.7.9]{Reiter-Stegeman 2000}}]
Let $1 \leq p \leq 2$.
Single points of ${\mathbf R}^n$ are sets of spectral synthesis
for $M^{p,1}_s({\mathbf R}^n)$, if $0 \leq s <1$.

\end{example}

\begin{example}[cf. {\cite[Theorem 2.7.10]{Reiter-Stegeman 2000}}]
Let $1 \leq p \leq 2$
and $s \geq 0$.
Then a closed ball in ${\mathbf R}^n$ is a set of spectral
synthesis for $M^{p,1}_s({\mathbf R}^n)$
and so is the complement of an open ball in
${\mathbf R}^n$.

\end{example}

\section{Spectral synthesis revisited}
\label{Appendix}

As mentioned  in Section \ref{example of set of spectral synthesis},
if $1 \leq p \leq 2$ and $0 \leq s <1$, then
single points of ${\mathbf R}^n$ are sets of spectral synthesis
for $M^{p,1}_s({\mathbf R}^n)$.
In this section we will prove it directly without using Theorem
\ref{equality of the set of spectral synthesis},
and this is also true for $p >2$.

\begin{theorem}
\label{1 point spectral}
Let $1 \leq p < \infty$, $0 \leq s <1$
and $x_0 \in {\mathbf R}^n$.
Then $\{ x_0 \}$ is a set of spectral synthesis
in $M^{p,1}_s({\mathbf R}^n)$.

\end{theorem}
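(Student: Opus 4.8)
The plan is to reduce to a dense subspace of $I(\{x_0\})$ and then make the scaling condition $s<1$ do the work on the Fourier transform side. By Remark~\ref{translate of spectral synthesis} it suffices to take $x_0=0$; write $X=M^{p,1}_s({\mathbf R}^n)$. One inclusion is automatic: $I(\{0\})$ is a closed ideal by Lemma~\ref{Basic lemma 1 closed ideal} and it contains $J_0(\{0\})$ (a function vanishing near $0$ vanishes at $0$), so $J(\{0\})\subseteq I(\{0\})$. For the reverse inclusion I would first observe that the set of $f\in C_c^\infty({\mathbf R}^n)$ with $f(0)=0$ is dense in $I(\{0\})$: given $f\in I(\{0\})$, approximate it in $X$ by $\phi_k\in C_c^\infty$ (possible by Theorem~\ref{f-psif} and the density of ${\mathcal S}$), note that $\phi_k(0)\to f(0)=0$ since $M^{p,1}_s({\mathbf R}^n)$ embeds continuously into $C({\mathbf R}^n)$, and replace $\phi_k$ by $\phi_k-\phi_k(0)\chi$ for a fixed $\chi\in C_c^\infty$ with $\chi(0)=1$. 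Hence it is enough to show $f\in J(\{0\})$ for every $f\in C_c^\infty({\mathbf R}^n)$ with $f(0)=0$.

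Fix $\theta\in C_c^\infty({\mathbf R}^n)$ with $\theta\equiv 1$ on $B_1(0)$, and put $\theta_\lambda(x)=\theta(x/\lambda)$ for $0<\lambda\le 1$. Then $(1-\theta_\lambda)f=f-\theta_\lambda f\in M^{p,1}_s({\mathbf R}^n)$ (as $M^{p,1}_s$ is a multiplication algebra) and it vanishes on $B_\lambda(0)$, so $(1-\theta_\lambda)f\in J_0(\{0\})\subseteq J(\{0\})$. Since $J(\{0\})$ is closed, it suffices to prove $\|\theta_\lambda f\|_{M^{p,1}_s}\to 0$ as $\lambda\to 0$. Every $\theta_\lambda f$ is supported in the fixed compact set ${\rm supp~}f$, so choosing $\psi\in C_c^\infty$ with $\psi\equiv 1$ on ${\rm supp~}f$ and applying Lemma~\ref{product estimate} together with the embedding ${\mathcal F}L^1_s\hookrightarrow M^{\infty,1}_s$ (established inside the proof of Lemma~\ref{inclusion L_0}) gives, with constants independent of $\lambda$,
\begin{equation*}
\|\theta_\lambda f\|_{M^{p,1}_s}=\|\psi\,\theta_\lambda f\|_{M^{p,1}_s}\lesssim\|\theta_\lambda f\|_{M^{\infty,1}_s}\,\|\psi\|_{M^{p,1}_s}\lesssim\|\theta_\lambda f\|_{{\mathcal F}L^1_s}.
\end{equation*}
Now I would estimate $\|\theta_\lambda f\|_{{\mathcal F}L^1_s}=\|\widehat{\theta_\lambda f}\,\|_{L^1_s}$ directly. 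Using $\widehat{\theta_\lambda f}=(2\pi)^{-n}\widehat{\theta_\lambda}*\widehat f$, $\widehat{\theta_\lambda}(\zeta)=\lambda^n\widehat\theta(\lambda\zeta)$ and $\int\widehat f=(2\pi)^n f(0)=0$, one may subtract $\widehat{\theta_\lambda}(\xi)\int\widehat f$ and write
\begin{equation*}
\widehat{\theta_\lambda f}(\xi)=(2\pi)^{-n}\lambda^n\int_{{\mathbf R}^n}\bigl[\widehat\theta(\lambda\xi-\lambda\eta)-\widehat\theta(\lambda\xi)\bigr]\widehat f(\eta)\,d\eta.
\end{equation*}
After Fubini, the substitution $\mu=\lambda\xi$, and the elementary bound $\langle\mu/\lambda\rangle^{s}\le\lambda^{-s}\langle\mu\rangle^{s}$ (for $\lambda\le1$, $s\ge0$), the inner $\xi$-integral is $\lesssim\lambda^{-s}\|T_{\lambda\eta}\widehat\theta-\widehat\theta\|_{L^1_s}$, so
\begin{equation*}
\|\theta_\lambda f\|_{{\mathcal F}L^1_s}\lesssim\lambda^{-s}\int_{{\mathbf R}^n}|\widehat f(\eta)|\;\|T_{\lambda\eta}\widehat\theta-\widehat\theta\|_{L^1_s}\,d\eta.
\end{equation*}

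Since $\widehat\theta\in{\mathcal S}$, one has the weighted modulus-of-continuity estimate $\|T_h\widehat\theta-\widehat\theta\|_{L^1_s}\lesssim\min(1,|h|)\,\langle h\rangle^{s}$ (the small-$|h|$ part from a first-order Taylor estimate with the weight split $\langle\mu\rangle^{s}\lesssim\langle\mu-th\rangle^{s}\langle h\rangle^{s}$, the large-$|h|$ part from the triangle inequality and a weight shift). The crucial point now is that $s<1$: fix $\sigma$ with $s<\sigma<1$; then $\min(1,|h|)\langle h\rangle^{s}\le|h|^{\sigma}\langle h\rangle^{s}$, whence with $h=\lambda\eta$ and $\lambda\le1$,
\begin{equation*}
\|\theta_\lambda f\|_{{\mathcal F}L^1_s}\lesssim\lambda^{-s}\int_{{\mathbf R}^n}|\widehat f(\eta)|\,\lambda^{\sigma}|\eta|^{\sigma}\langle\eta\rangle^{s}\,d\eta=\lambda^{\,\sigma-s}\int_{{\mathbf R}^n}|\widehat f(\eta)|\,|\eta|^{\sigma}\langle\eta\rangle^{s}\,d\eta\lesssim\lambda^{\,\sigma-s}\,\|f\|_{{\mathcal F}L^1_{\sigma+s}}.
\end{equation*}
For $f\in C_c^\infty$ we have $\widehat f\in{\mathcal S}$, so $\|f\|_{{\mathcal F}L^1_{\sigma+s}}<\infty$, and $\sigma-s>0$ yields $\|\theta_\lambda f\|_{{\mathcal F}L^1_s}\to 0$, hence $\|\theta_\lambda f\|_{M^{p,1}_s}\to 0$. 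Thus $(1-\theta_\lambda)f\to f$ in $M^{p,1}_s({\mathbf R}^n)$, so $f\in J(\{0\})$; by the density step $I(\{0\})\subseteq J(\{0\})$, and together with the trivial inclusion $I(\{0\})=J(\{0\})$, i.e. $\{0\}$ is a set of spectral synthesis. Translating by $x_0$ finishes the proof. The main obstacle is the scaling bookkeeping in the middle step: the whole argument closes only because the exponent $\sigma$ can be chosen strictly between $s$ and $1$, which is precisely the hypothesis $s<1$ (and, as the examples in Section~\ref{example of set of spectral synthesis} indicate, it genuinely fails for $s\ge1$).
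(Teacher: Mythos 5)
Your proof is correct, and it reaches the same overall skeleton as the paper's argument (the trivial inclusion $J(\{x_0\})\subseteq I(\{x_0\})$, then approximation of $f\in I(\{x_0\})$ by $(1-\text{cutoff})f$ with a cutoff concentrating at $x_0$), but the technical core is genuinely different. The paper proves the key vanishing statement for \emph{every} $f\in M^{p,1}_s({\mathbf R}^n)$ with $f(x_0)=0$ (Lemmas \ref{weighted Bhimani Ratnakumar lemma} and \ref{weighted Bhimani lemma}): to do so it writes the cutoff as a convolution square $\psi=\psi^{(1)}*\psi^{(2)}$ (Lemma \ref{convolution decomposition}) and derives, via Cauchy--Schwarz and Plancherel, the interpolation-type bound $\int\langle\xi\rangle^s|\widehat\psi(\xi-\vartheta)-\widehat\psi(\xi)|\,d\xi\lesssim|\vartheta|^s(\max_{|t|\le R}|e^{i\vartheta t}-1|)^{1-s}$ of Lemma \ref{estimates of FT of psi}, which is exactly calibrated so that the dominating function is $\langle\eta\rangle^s|\widehat f(\eta)|\in L^1$ (using only $M^{1,1}_s\hookrightarrow{\mathcal F}L^1_s$) and dominated convergence closes the argument; here $s<1$ enters through the positive exponent $1-s$. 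You instead insert a preliminary density reduction to $f\in C_c^\infty$ with $f(0)=0$ (which is legitimate, using Theorem \ref{f-psif}, the density of ${\mathcal S}$, the embedding into $C({\mathbf R}^n)$, and the closedness of $J(\{0\})$), and this extra regularity lets you replace the delicate Lemma \ref{estimates of FT of psi} by an elementary first-order modulus-of-continuity bound $\|T_h\widehat\theta-\widehat\theta\|_{L^1_s}\lesssim\min(1,|h|)\langle h\rangle^s$ and absorb the loss $\lambda^{-s}$ into $\lambda^{\sigma}$ using the finiteness of $\|f\|_{{\mathcal F}L^1_{\sigma+s}}$ for Schwartz $f$; the hypothesis $s<1$ enters as the existence of $\sigma\in(s,1]$. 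What your route buys is a shorter, more elementary estimate with an explicit rate $\lambda^{\sigma-s}$; what it gives up is the stronger pointwise Ditkin statement of Lemma \ref{weighted Bhimani lemma} for arbitrary $f\in M^{p,1}_s({\mathbf R}^n)$ vanishing at $x_0$, which the paper records separately (the Wiener--Ditkin condition) and which does not follow from a density argument alone. Your reduction from $M^{p,1}_s$ to ${\mathcal F}L^1_s$ via $\|\theta_\lambda f\|_{M^{p,1}_s}=\|\psi\,\theta_\lambda f\|_{M^{p,1}_s}\lesssim\|\theta_\lambda f\|_{M^{\infty,1}_s}\|\psi\|_{M^{p,1}_s}\lesssim\|\theta_\lambda f\|_{{\mathcal F}L^1_s}$ is also a valid substitute for the paper's passage through $M^{1,1}_s$ in Lemma \ref{weighted Bhimani lemma}; all constants there are independent of $\lambda$ because $\psi$ is fixed.
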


\subsection{Technical Lemma}

\begin{lemma}
\label{convolution decomposition}

For any $t_0  \in{\bf R}^n$ and $ R > 0 $,
there exist $\psi^{(1)} ,  \psi^{(2)} \in C_c^\infty ({\mathbf R}^n)$ such that
$\psi= \psi^{(1)} *  \psi^{(2)}$
satisfies
$\psi (x)=1$ on $B_R(t_0)$
and
${\rm supp~}  \psi \subset B_{5R}(t_0)$.

\end{lemma}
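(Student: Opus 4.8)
The plan is to obtain $\psi$ by mollifying the indicator function of a ball, splitting the two smoothing steps this requires into the two factors $\psi^{(1)}$ and $\psi^{(2)}$. Fix a function $\rho\in C^\infty_c({\mathbf R}^n)$ with $\rho\ge 0$, $\int_{{\mathbf R}^n}\rho\,dx=1$ and ${\rm supp~}\rho\subset B_{R/2}(0)$ (a rescaled standard mollifier). I then set
$$
\psi^{(1)}:={\mathbf 1}_{B_{2R}(t_0)}*\rho,\qquad \psi^{(2)}:=\rho .
$$
Since ${\mathbf 1}_{B_{2R}(t_0)}\in L^1({\mathbf R}^n)$ has compact support and $\rho\in C^\infty_c({\mathbf R}^n)$, differentiation under the integral sign shows $\psi^{(1)}\in C^\infty({\mathbf R}^n)$, while ${\rm supp~}\psi^{(1)}\subset B_{2R}(t_0)+B_{R/2}(0)=B_{5R/2}(t_0)$; hence both $\psi^{(1)},\psi^{(2)}\in C^\infty_c({\mathbf R}^n)$, as demanded.

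Next I would use the associativity of convolution to write
$$
\psi=\psi^{(1)}*\psi^{(2)}={\mathbf 1}_{B_{2R}(t_0)}*(\rho*\rho)={\mathbf 1}_{B_{2R}(t_0)}*\theta ,
$$
where $\theta:=\rho*\rho$ satisfies $\theta\ge 0$, $\int_{{\mathbf R}^n}\theta\,dx=1$ and ${\rm supp~}\theta\subset B_R(0)$. The support statement for $\psi$ is then immediate, with room to spare:
$$
{\rm supp~}\psi\subset B_{2R}(t_0)+B_R(0)=B_{3R}(t_0)\subset B_{5R}(t_0).
$$

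It remains to check that $\psi\equiv 1$ on $B_R(t_0)$. If $x\in B_R(t_0)$ and $y\in{\rm supp~}\theta\subset B_R(0)$, then the triangle inequality gives $|(x-y)-t_0|\le|x-t_0|+|y|\le 2R$, so ${\mathbf 1}_{B_{2R}(t_0)}(x-y)=1$; integrating this identity against $\theta$ yields
$$
\psi(x)=\int_{{\mathbf R}^n}{\mathbf 1}_{B_{2R}(t_0)}(x-y)\,\theta(y)\,dy=\int_{{\mathbf R}^n}\theta(y)\,dy=1 ,
$$
which completes the construction. I do not expect a genuine obstacle here: the only point that needs a little care is that \emph{two} mollifications are required — one to make the indicator smooth and a second to keep the exposed factor $\psi^{(2)}$ smooth — and that the total blurring radius $\tfrac R2+\tfrac R2=R$ is small enough that $\psi$ is still identically $1$ on $B_R(t_0)$ yet remains supported well inside $B_{5R}(t_0)$.
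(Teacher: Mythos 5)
Your proof is correct and uses essentially the same idea as the paper: one factor is (after the first mollification) identically $1$ on a ball slightly larger than $B_R(t_0)$, the other is a nonnegative normalized bump with small support, so the convolution is constant equal to $1$ on $B_R(t_0)$, and the supports add up to stay inside $B_{5R}(t_0)$. The only cosmetic differences are that the paper builds the plateau factor directly as a dilated cutoff $g(x/2)$ rather than as a mollified indicator, and your construction lands in the smaller ball $B_{3R}(t_0)$, which is of course still fine.
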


\begin{proof}
Let $g \in C_c^\infty({\bf R}^n)$ be  such that
$g(x) \geq 0$ $(x \in {\mathbf R}^n)$,
$g(x) = 1$ on $B_R(0)$
and
${\rm supp~} g \subset  B_{2R}(0)$.
Define $\psi^{(1)}, \psi^{(2)} \in C^\infty_c({\mathbf R}^n)$ by
$$
\psi^{(1)} (x)= g \Big( \frac{x}{2} \Big)
\quad
{\rm and}
\quad
\psi^{(2)} (x) = \frac{2^n}{ \| g  \|_{L^1}} g(2(x-t_0)),
$$
respectively, and set $\psi= \psi^{(1)} * \psi^{(2)}$.
Then $\psi$ satisfies  the desired conditions.
Indeed, we note that
\begin{align*}
\psi(x) &=
\int_{ {\mathbf R}^n }  \psi^{(1)}  (x-y)  \psi^{(2)} (y) dy
=
 \frac{2^n}{  \| g  \|_{L^1}}
\int_{ {\mathbf R}^n }
g \Big( \frac{x- y}{2} \Big)g(2(y -t_0) )dy.
\end{align*}
Moreover,   if
$x \in B_R(t_0)$ and
$|2(y-t_0) | \leq 2 R$, then
we have
$
\frac{|x-y|}{2} \leq
\frac{|x-t_0|}{2} + \frac{ |t_0-y| }{2} \leq R,
$
and thus $g ( \frac{x-y}{2} ) =1$.
Therefore, we obtain
\begin{align*}
\psi(x) & =  ( \psi^{(1)} * \psi^{(2)})  (x)
= \frac{2^n}{  \|  g  \|_1}\int_{{\mathbf R}^n}  g(2(y -t_0) ) dy = 1
\end{align*}
on $B_R(t_0)$.
Also we have
$
{\rm supp~} \psi^{(1)}  \subset \{ x \in {\mathbf R}^n  ~|~ |x| \leq 4 R \}
$
and
$
{\rm supp~ } \psi^{(2)} \subset \{ x \in {\mathbf R}^n ~|~ |x- t_0| \leq R \},
$
and thus we see
$
{\rm supp~} \psi
\subset
{\rm supp ~} \psi^{(1)} +
{\rm supp~} \psi^{(2)}
\subset  \{x  \in {\mathbf R}^n ~|~  |x- t_0|\leq 5 R \} .
$

\end{proof}

\begin{lemma}
\label{estimates of FT of psi}
Let
$\psi^{(1)}, \psi^{(2)} \in C^\infty_c ({\mathbf R}^n)$
be such that
${\rm supp~} \psi^{(j)} \subset B_R(0)$
for some $R>0$ $(j=1,2)$.
Define $\psi = \psi^{(1)} * \psi^{(2)}$.
Then we have for $0 \leq s <1$ and $ \vartheta \in {\mathbf R}^n$
$$
\int_{{\mathbf R}^n}
\langle \xi \rangle^s
\big| \widehat{\psi}
\left(
\xi- \vartheta
\right) - \widehat{\psi}(\xi) \big|
d\xi
\leq C_\psi
|  \vartheta  |^s
\Big(
\max_{|t|\leq R}|e^{i  \vartheta t  }-  1|
\Big)^{1-s}.
$$

\end{lemma}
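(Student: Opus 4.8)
The plan is to recast the integral as a Fourier--Beurling norm and then interpolate between weight exponents $1$ and $0$. Since $\widehat{M_\vartheta\psi}(\xi)=\widehat\psi(\xi-\vartheta)$ for the modulation operator $M_\vartheta$, and since $\psi=\psi^{(1)}*\psi^{(2)}$ lies in $C_c^\infty({\mathbf R}^n)$ with $\operatorname{supp}\psi\subset B_{2R}(0)$ (the convolution structure being used only through this support estimate), the left-hand side equals $\|(M_\vartheta-I)\psi\|_{{\mathcal F}L^1_s}$. Abbreviate $D(\xi)=\widehat\psi(\xi-\vartheta)-\widehat\psi(\xi)$ and $\omega=\max_{|t|\le R}|e^{i\vartheta t}-1|$. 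Applying H\"older's inequality with exponents $1/s$ and $1/(1-s)$ to the pointwise identity $\langle\xi\rangle^s|D(\xi)|=\bigl(\langle\xi\rangle|D(\xi)|\bigr)^s|D(\xi)|^{1-s}$ (which is trivial when $s=0$) gives
\[
\int_{{\mathbf R}^n}\langle\xi\rangle^s|D(\xi)|\,d\xi\ \le\ \|(M_\vartheta-I)\psi\|_{{\mathcal F}L^1_1}^{\,s}\,\|(M_\vartheta-I)\psi\|_{{\mathcal F}L^1}^{\,1-s}.
\]
So it is enough to prove the two endpoint estimates $\|(M_\vartheta-I)\psi\|_{{\mathcal F}L^1_1}\lesssim|\vartheta|$ and $\|(M_\vartheta-I)\psi\|_{{\mathcal F}L^1}\lesssim\omega$ for all $\vartheta\in{\mathbf R}^n$ (with constants depending on $\psi$); the lemma then follows with $C_\psi$ the product of the two.

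For the weighted factor I would argue as follows. If $|\vartheta|\le1$, the mean value theorem gives $|D(\xi)|\le|\vartheta|\sup_{|y|\le1}|\nabla\widehat\psi(\xi-y)|$, and since $\widehat\psi\in{\mathcal S}({\mathbf R}^n)$ the function $\xi\mapsto\langle\xi\rangle\sup_{|y|\le1}|\nabla\widehat\psi(\xi-y)|$ is integrable, whence $\|(M_\vartheta-I)\psi\|_{{\mathcal F}L^1_1}\lesssim|\vartheta|$; if $|\vartheta|\ge1$, the triangle inequality together with the weak submultiplicativity $\langle\xi+\vartheta\rangle\le\sqrt2\,\langle\xi\rangle\langle\vartheta\rangle$ gives $\|(M_\vartheta-I)\psi\|_{{\mathcal F}L^1_1}\le(\sqrt2\,\langle\vartheta\rangle+1)\,\|\psi\|_{{\mathcal F}L^1_1}\lesssim|\vartheta|$. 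This is the routine part.

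The unweighted factor is the crux, and here the support of $\psi$ must be exploited (one cannot bound an ${\mathcal F}L^1$-norm by a sup-norm). I would fix $\eta_0\in C_c^\infty({\mathbf R}^n)$ with $\eta_0\equiv1$ on $B_1(0)$ and put $\eta=D_{2R}\eta_0$, so that $\eta\equiv1$ on $\operatorname{supp}\psi$. Using that ${\mathcal F}L^1$ is a Banach algebra under pointwise multiplication,
\[
\|(M_\vartheta-I)\psi\|_{{\mathcal F}L^1}=\bigl\|\bigl((M_\vartheta-I)\eta\bigr)\psi\bigr\|_{{\mathcal F}L^1}\ \lesssim\ \|(M_\vartheta-I)\eta\|_{{\mathcal F}L^1}\,\|\psi\|_{{\mathcal F}L^1}.
\]
Since dilations are isometries of ${\mathcal F}L^1$ and $(M_\vartheta-I)\eta=D_{2R}\bigl[(M_{2R\vartheta}-I)\eta_0\bigr]$, one has $\|(M_\vartheta-I)\eta\|_{{\mathcal F}L^1}=\|(M_{2R\vartheta}-I)\eta_0\|_{{\mathcal F}L^1}$. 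For $|2R\vartheta|\le1$ the mean value bound for $\widehat{\eta_0}$ makes this $\lesssim 2R|\vartheta|$, and for $|2R\vartheta|\ge1$ the modulation invariance of ${\mathcal F}L^1$ makes it $\le2\|\eta_0\|_{{\mathcal F}L^1}$; in all cases $\|(M_{2R\vartheta}-I)\eta_0\|_{{\mathcal F}L^1}\lesssim\min(2R|\vartheta|,1)$. An elementary computation gives $\omega=2\sin\bigl(\tfrac12\min(R|\vartheta|,\pi)\bigr)$, and together with $\sin u\ge\tfrac2\pi u$ on $[0,\tfrac\pi2]$ this yields $\min(2R|\vartheta|,1)\le\pi\,\omega$. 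Hence $\|(M_\vartheta-I)\psi\|_{{\mathcal F}L^1}\lesssim\omega$, and feeding the two endpoint bounds into the H\"older inequality above concludes the argument.

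The hard part is this last estimate: the task is to convert the smallness of the slowly varying symbol $e^{i\vartheta t}-1$ on $B_{2R}$ into an ${\mathcal F}L^1$-bound, and the device that makes it work is to peel the symbol off against a cutoff adapted to the length scale $R$ and then rescale; after that the weighted bound and the interpolation are immediate.
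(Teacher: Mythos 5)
Your argument is correct, but it follows a genuinely different route from the paper's. The paper exploits the factorization $\widehat{\psi}=\widehat{\psi^{(1)}}\cdot\widehat{\psi^{(2)}}$ in an essential way: it splits the difference $\widehat{\psi}(\cdot-\vartheta)-\widehat{\psi}$ Leibniz-style into two products, applies Cauchy--Schwarz to each so that Plancherel converts the difference factor into $\|(e^{i\vartheta x}-1)\psi^{(j)}\|_{L^2}$, and then uses the compact support of $\psi^{(j)}$ together with a pointwise splitting of $|e^{i\vartheta x}-1|$ into the powers $s$ and $1-s$ (the $L^2$ structure is what makes the passage to the physical side possible, since a direct Plancherel on the $L^1$ integral is unavailable). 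You instead interpolate: H\"older gives the log-convexity $\|\cdot\|_{{\mathcal F}L^1_s}\le\|\cdot\|_{{\mathcal F}L^1_1}^{s}\|\cdot\|_{{\mathcal F}L^1}^{1-s}$, the weighted endpoint is routine (mean value theorem plus Schwartz decay for small $\vartheta$, submultiplicativity of $\langle\cdot\rangle$ for large $\vartheta$), and the unweighted endpoint is obtained by writing $(M_\vartheta-I)\psi=((M_\vartheta-I)\eta)\psi$ for a cutoff $\eta$ adapted to $\operatorname{supp}\psi\subset B_{2R}(0)$, invoking the Banach algebra property of ${\mathcal F}L^1$, and reducing by dilation invariance to a fixed cutoff $\eta_0$; the elementary comparison $\min(2R|\vartheta|,1)\le\pi\,\max_{|t|\le R}|e^{i\vartheta t}-1|$ closes the loop. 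I checked the individual steps (the identity $\omega=2\sin(\tfrac12\min(R|\vartheta|,\pi))$, the dilation identity $(M_\vartheta-I)D_{2R}\eta_0=D_{2R}[(M_{2R\vartheta}-I)\eta_0]$, and the endpoint constants) and they are sound. What your approach buys is that the convolution structure $\psi=\psi^{(1)}*\psi^{(2)}$ is used only through the support inclusion $\operatorname{supp}\psi\subset B_{2R}(0)$, so the lemma actually holds for an arbitrary $\psi\in C^\infty_c(B_{2R}(0))$ and the preparatory decomposition lemma becomes unnecessary for this purpose; what the paper's approach buys is a single self-contained computation with explicit constants expressed through $L^2$-norms of the factors.
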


\begin{proof}
We first  note that
since $\widehat{\psi} = \widehat{\psi^{(1)}} \cdot \widehat{\psi^{(2)}}$, we have
\begin{align*}
&
\widehat{\psi} ( \xi-  \vartheta) - \widehat{\psi}(\xi)\\
&=
\big( \widehat{\psi^{(1)}} (\xi-  \vartheta )-
\widehat{\psi^{(1)}}(\xi) \big)
\widehat{\psi^{(2)}} ( \xi -   \vartheta )
+\widehat{\psi^{(1)}}(\xi)
\big( \widehat{\psi^{(2)}}
(\xi-    \vartheta  ) - \widehat{\psi^{(2)}}(\xi)  \big).
\end{align*}
Then it follows from
 the Cauchy-Schwarz inequality
and the Plancherel theorem
that
\begin{align*}
&
\int_{{\mathbf R}^n}
\langle \xi \rangle^s
\big| \widehat{\psi}  (\xi-   \vartheta   )
- \widehat{\psi}(\xi) \big|
d\xi\\
&
\leq
\Big(
\int_{{\mathbf R}^n}
\big|  \widehat{\psi^{(1)}}
( \xi-   \vartheta  ) -
\widehat{\psi^{(1)}}(\xi) \big|^2d\xi \big)^{\frac{1}{2}}
\Big( \int_{{\mathbf R}^n}
\langle \xi \rangle^{2s}
\big| \widehat{\psi^{(2)}}
( \xi-   \vartheta  ) \big|^2
d \xi
\Big)^{\frac{1}{2}}\\
& \quad +
\Big(
\int_{{\mathbf R}^n}
\big|  \widehat{\psi^{(2)}}
( \xi-   \vartheta  ) -
\widehat{\psi^{(2)}}(\xi) \big|^2d\xi \big)^{\frac{1}{2}}
\big( \int_{{\mathbf R}^n}
\langle \xi  \rangle^{2s}
\big| \widehat{\psi^{(1)}}
( \xi-   \vartheta   )  \big|^2
d\xi \Big)^{\frac{1}{2}}\\
&=
\| {\mathcal F}_{x \to \xi}
[ (e^{ i  \vartheta x}-1)\psi^{(1)} (x) ](\xi)
\|_{L^2({\mathbf R}^n_\xi)}
\langle
 \vartheta
\rangle^s
\|  \langle \xi \rangle^s
\widehat{\psi^{(2)}}  (\xi) \|_{L^2({\mathbf R}^n_\xi)} \\
& \quad +
\| {\mathcal F}_{x \to \xi}
[ (e^{ i  \vartheta   x}-1)\psi^{(2)} (x) ] (\xi)
 \|_{L^2({\mathbf R}^n_\xi)}
\langle
 \vartheta
\rangle^s
\|  \langle \xi \rangle^s
\widehat{\psi^{(1)}}  (\xi) \|_{L^2({\mathbf R}^n_\xi)}.
\end{align*}
We note that
$|e^{ i  \vartheta x  } -1 |
\leq \min \{ 2,    |   \vartheta  x  | \}$,
and
for $j=1,2$,
it follows from the Plancherel Theorem that
$$
\| {\mathcal F}_{x \to \xi}
[ (e^{ i  \vartheta x}-1)\psi^{(j)} (x) ] (\xi)
 \|_{L^2({\mathbf R}^n_\xi)}
=
(2 \pi)^{ \frac{n}{2} }
\|
 (e^{ i  \vartheta x}-1)\psi^{(j)} (x)  \|_{L^2({\mathbf R}^n_x)}.
$$
Moreover, since ${\rm supp~} \psi^{(j)}  \subset B_R(0)$ $(j=1,2)$,
we obtain
\begin{align*}
&
\| {\mathcal F}_{x \to \xi}
[ (e^{ i  \vartheta x}-1)\psi^{(j)} (x) ]  (\xi)
\|_{L^2({\mathbf R}^n_\xi)}
(
1 + |   \vartheta |^s )
\\
&\lesssim
\Big( \max_{ |x| \leq R}|e^{i
 \vartheta  x}-1| \Big)^{1-s}
\Big( \int_{{\mathbf R}^n}  \big( |e^{i   \vartheta  x}-1|^s|
\psi^{(j)}  (x)|  \big)^2dx \Big)^{\frac{1}{2}}
(1 + |   \vartheta |^s )
\\
&\leq
|   \vartheta |^s
\Big( \max_{ |x| \leq R}|e^{i   \vartheta  x}-1| \Big)^{1-s}
\Big( \int_{{\mathbf R}^n}  \big(  |x|^s|
\psi^{(j)} (x)|  \big)^2dx \Big)^{\frac{1}{2}}  \\
& \quad +
2^s |  \vartheta  |^s
\Big( \max_{|x|\leq R}|e^{i  \vartheta  x}  -1| \Big)^{1-s}
\Big( \int_{{\mathbf R}^n}
| \psi^{(j)}  (x)|^2dx \Big)^{\frac{1}{2}},
\end{align*}
which yields the desired inequality.

\end{proof}

Now we prepare a lemma,
which corresponds to a weighted version of
Bhimani-Ratnakumar
\cite[Proposition 3.14]{Bhimani Ratnakumar}.

\begin{lemma}
\label{weighted Bhimani Ratnakumar lemma}

Let  $0 \leq s < 1$, $f \in M^{1,1}_s ({\mathbf R}^n)$, $x_0 \in {\mathbf R}^n$
and $\varepsilon >0$.
Then there exists $\phi \in C_c^\infty ({\mathbf R}^n)$ such that
\begin{itemize}
\item[$(i)$] $\|  (f- f (x_0) ) \phi \|_{M^{1,1}_s} < \varepsilon$
\item[$(ii)$]
$\phi(x) =1$ in some neighborhood of $x_0$.
\end{itemize}
\end{lemma}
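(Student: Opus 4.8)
The plan is to reduce the statement to the technical estimate in Lemma \ref{estimates of FT of psi}, which already packages the weighted $\mathcal{F}L^1_s$-type smallness we need. First I would observe that, since the $M^{1,1}_s$-norm is translation invariant up to an admissible window change, we may assume $x_0=0$; equivalently, replacing $f$ by $T_{-x_0}f$ and $\phi$ by $T_{-x_0}\phi$ leaves both conditions unchanged. So it suffices to produce $\phi\in C_c^\infty(\mathbf R^n)$, equal to $1$ near the origin, with $\|(f-f(0))\phi\|_{M^{1,1}_s}<\varepsilon$.

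Next I would pass to the Fourier side. Using the identification $M^{1,1}_s=\mathcal{F}^{-1}W(\mathcal{F}L^1,\ell^1_s)$ and the formula \eqref{basic STFT}, the $M^{1,1}_s$-norm of $(f-f(0))\phi$ is comparable to a weighted convolution expression built from $\widehat{f}$ and $\widehat{\phi}$. The key point is that $\phi(0)=1$ forces $\frac{1}{(2\pi)^n}\int\widehat\phi(\eta)\,d\eta=1$, so that
\begin{align*}
\big((f-f(0))\phi\big)^\wedge(\xi)
&=(2\pi)^{-n}\int_{\mathbf R^n}\widehat f(\eta)\big(\widehat\phi(\xi-\eta)-\widehat\phi(\xi)\big)\,d\eta,
\end{align*}
exactly as in the proof of Lemma \ref{Wiener-Levy pre-lemma 1}. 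Thus, by Minkowski's inequality for integrals, $\|(f-f(0))\phi\|_{M^{1,1}_s}\lesssim\int|\widehat f(\eta)|\,\big\|\langle\cdot\rangle^s\big(\widehat\phi(\cdot-\eta)-\widehat\phi(\cdot)\big)\big\|_{L^1}\,d\eta$ (with appropriate care about the $W(\mathcal{F}L^1,\ell^1_s)$ norm rather than a plain $L^1$ norm, handled via the submultiplicativity $\langle\xi\rangle^s\lesssim\langle\xi-\eta\rangle^s\langle\eta\rangle^s$ and the standard amalgam estimates).

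Now I would choose $\phi$ via Lemma \ref{convolution decomposition}: for $R>0$ take $\psi^{(1)},\psi^{(2)}\in C_c^\infty$ with $\psi^{(1)}*\psi^{(2)}=\psi$ equal to $1$ on $B_R(0)$ and supported in $B_{5R}(0)$; then set $\phi=\psi$, rescaled so that $\phi(x)=\psi(x/\delta)$ or similar for small $\delta>0$. Applying Lemma \ref{estimates of FT of psi} gives, for each frequency shift $\vartheta$,
\begin{align*}
\int_{\mathbf R^n}\langle\xi\rangle^s\big|\widehat\psi(\xi-\vartheta)-\widehat\psi(\xi)\big|\,d\xi
\le C_\psi\,|\vartheta|^s\Big(\max_{|t|\le R}|e^{i\vartheta t}-1|\Big)^{1-s},
\end{align*}
and the right-hand side, after rescaling so the support shrinks with $\delta$, behaves like $|\vartheta|^s(\delta|\vartheta|)^{1-s}=\delta^{1-s}|\vartheta|$ for $\delta|\vartheta|\le 1$ and is bounded by $2^{1-s}|\vartheta|^s$ otherwise; either way it is dominated by a constant times $\delta^{1-s}\langle\vartheta\rangle^{1-s}\cdot|\vartheta|^s$, which against the weight $\langle\vartheta\rangle^s|\widehat f(\vartheta)|$ (integrable because $f\in M^{1,1}_s$) yields, by dominated convergence, $\|(f-f(0))\phi\|_{M^{1,1}_s}\to 0$ as $\delta\to 0$. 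Choosing $\delta$ small enough gives $(i)$, while $(ii)$ holds by construction since $\psi\equiv 1$ on $B_R(0)$ and hence the rescaled $\phi\equiv 1$ on $B_{R\delta}(0)$.

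The main obstacle I expect is bookkeeping the passage between the plain weighted $L^1$ estimate of Lemma \ref{estimates of FT of psi} and the genuine $W(\mathcal{F}L^1,\ell^1_s)=\mathcal{F}M^{1,1}_s$ norm, together with getting the scaling in $\delta$ to interact correctly with the weight $\langle\vartheta\rangle^s$ and the crucial exponent $1-s>0$ (this is where the hypothesis $s<1$ is used — for $s\ge 1$ the factor $(\max|e^{i\vartheta t}-1|)^{1-s}$ no longer decays and the argument breaks). Everything else is a routine combination of the Fourier-side reformulation, the decomposition lemma, and dominated convergence exactly as in the unweighted case of Bhimani--Ratnakumar.
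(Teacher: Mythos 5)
Your overall route is the paper's: reduce to $x_0=0$, use Fourier inversion to write $\big((f-f(x_0))\phi\big)^\wedge(\xi)=(2\pi)^{-n}\int\widehat f(\eta)\big(\widehat\phi(\xi-\eta)-\widehat\phi(\xi)\big)\,d\eta$, build $\phi$ as a dilate of the $\psi=\psi^{(1)}*\psi^{(2)}$ from Lemma \ref{convolution decomposition}, feed the difference into Lemma \ref{estimates of FT of psi} with $\vartheta=\delta\eta$, and finish by dominated convergence (this is where $1-s>0$ enters, as you say). (Incidentally, the displayed identity comes from Fourier inversion of $f$ at $x_0$, not from $\phi(0)=1$.)

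The one step you have not actually supplied is the passage from $\|(f-f(x_0))\phi\|_{M^{1,1}_s}$ to $\int|\widehat f(\eta)|\,\|\langle\cdot\rangle^s(\widehat\phi(\cdot-\eta)-\widehat\phi(\cdot))\|_{L^1}\,d\eta$. After Minkowski this amounts to the bound $\|g\|_{M^{1,1}_s}\lesssim\|\langle\cdot\rangle^s\,\widehat g\,\|_{L^1}$, i.e.\ ${\mathcal F}L^1_s\hookrightarrow M^{1,1}_s$, which is \emph{false} in general (already for $s=0$: ${\mathcal F}L^1\not\subset M^{1,1}\subset L^1$), and neither submultiplicativity of $\langle\cdot\rangle^s$ nor a generic amalgam estimate rescues it. It is true only for functions supported in a \emph{fixed} compact set, with a constant depending on that set — this is exactly Lemma \ref{inclusion L_0}, and it is exactly why the paper inserts the factorization step: for $\lambda>5$ one has ${\rm supp\,}h^\lambda\subset B_1(x_0)$, so $h^\lambda=h^\lambda\cdot\psi(\cdot-x_0)$ with the \emph{fixed} window $\psi$, and then \eqref{basic STFT} plus Young's inequality give $\|h^\lambda\|_{M^{1,1}_s}\lesssim\|\langle\cdot\rangle^s\widehat{h^\lambda}\|_{L^1}\,\|\psi\|_{M^{1,1}_s}$. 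Since your dilates $\phi_\delta$ all have support in a fixed ball for $\delta\le1$, your inequality is correct in context, but you must say this; as written the justification is the false general embedding. A second, minor point: your claimed $\delta$-dependent majorant $\delta^{1-s}\langle\vartheta\rangle^{1-s}|\vartheta|^s$ multiplied by $|\widehat f(\vartheta)|$ is of order $\langle\vartheta\rangle|\widehat f(\vartheta)|$, which is not integrable under $f\in M^{1,1}_s$ with $s<1$; dominated convergence must instead be run with the $\delta$-independent dominant $2^{1-s}\langle\vartheta\rangle^s|\widehat f(\vartheta)|$ together with the pointwise convergence $\max_{|t|\le R}|e^{i\delta\vartheta t}-1|\to0$, which is what the paper does. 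With these two repairs your argument coincides with the paper's proof.
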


\begin{proof}

Take   $\psi = \psi^{(1)} * \psi^{(2)}  \in C^\infty_c ({\mathbf R}^n)$
as in Lemma \ref{convolution decomposition}
with $t_0=0$ and $R=1$,
i.e.,
$\psi (x) =1$ on $B_1(0)$
and
${\rm supp~} \psi \subset  B_5(0)$.
Define $\psi_\lambda(x)=\psi(\lambda x)$ and
$
 h^\lambda(x)= (f(x)-f(x_0)) \psi_\lambda(x-x_0)
$
for $\lambda >0$.
We note that if
$\lambda >5$
and $x \in {\rm supp~} h^\lambda$,
then
$\psi (x - x_0) =1$.
Thus
$h^\lambda(x)=h^\lambda(x)\psi(x-x_0)$.

Without loss of generality, we may assume   $x_0=0$
(see Remark \ref{translate of spectral synthesis}).
Let $g_0 (t) = e^{- \frac{|t|^2}{2}}$ $(t \in {\mathbf R}^n)$
and $\lambda >5$.
By \eqref{basic STFT} in Section \ref{STFT},
we have
\begin{align*}
\| h^\lambda \|_{M^{1,1}_s}
&= \| \|  \langle \xi \rangle^s  V_{g_0} h^\lambda
 (x,\xi) \|_{L^1({\mathbf R}^n_x)}
\|_{L^1 ({\mathbf R}^n_\xi)}  \\
&=
(2 \pi)^{-n}
\| \| \langle \xi \rangle^s
V_{ \widehat{g_0} } \widehat{ h^\lambda}  (\xi, -x) \|_{L^1({\mathbf R}^n_x)}
\|_{L^1({\mathbf R}^n_\xi)}.
\end{align*}
Since  $\widehat{g_0} =(2 \pi)^{ \frac{n}{2} }  g_0$, $g_0^* =g_0$ and
$h^\lambda (x) =h^\lambda (x) \psi (x)$,
we obtain by \eqref{basic STFT} in Section \ref{STFT}
\begin{align*}
V_{ \widehat{g_0}}   \widehat{ h^\lambda } (\xi, -x)
&= (2 \pi)^{ - \frac{n}{2} }
V_{g_0}  ( \widehat{h^\lambda} * \widehat{\psi}) (\xi, -x)
= (2 \pi)^{ - \frac{n}{2} }
e^{ i x \xi}(\widehat{h^\lambda} * \widehat{\psi} * M_{-x}g_0)(\xi).
\end{align*}
Thus by the Fubini Theorem and the Minkowski inequality for integral that
\begin{align*}
\| h^\lambda \|_{M^{1,1}_s}
&
=
(2 \pi)^{-n}
\| \| \langle \xi \rangle^s
V_{ \widehat{g_0} } \widehat{  h^\lambda  }  (\xi, -x) \|_{L^1({\mathbf R}^n_x)}
\|_{L^1({\mathbf R}^n_\xi)} \\
&\approx
\| \| \langle \xi \rangle^s
(\widehat{h^\lambda} * \widehat{\psi} * M_{-x}g_0)(\xi)
 \|_{L^1({\mathbf R}^n_\xi)}
\|_{L^1({\mathbf R}^n_x)} \\
& \leq
\|
\| \langle \xi \rangle^s \widehat{ h^\lambda  } (\xi)
 \|_{L^1({\mathbf R}^n_\xi)}
\| \langle \xi \rangle^s (\widehat{\psi} *M_{-x} g_0) (\xi)
\|_{L^1 ({\mathbf R}^n_\xi)}
\|_{L^1({\mathbf R}^n_x)} \\
&=
\| \langle \cdot \rangle^s \widehat{ h^\lambda  }
 \|_{L^1}
\| \psi \|_{M^{1,1}_s}.
\end{align*}
Since
$
\widehat{ h^\lambda  } (\zeta)
= (2 \pi)^{-n} (\widehat{ \psi_\lambda } * \widehat{f}) (\zeta)
- f(0) \widehat{ \psi_\lambda } (\zeta)
$
and
$\widehat{ \psi_\lambda  } (\zeta)
= \frac{1}{ \lambda^n } \widehat{\psi}  ( \frac{\zeta}{\lambda} )  $,
we have
\begin{align*}
\widehat{h^\lambda}(\zeta)
&=
\frac{1}{(2 \pi)^n}
\int_{{\mathbf R}^n}
\widehat{f}( \eta) \widehat{\psi_\lambda}
(\zeta- \eta) d \eta -
\frac{1}{(2 \pi)^n}
\Big(
\int_{{\mathbf R}^n}
\widehat{f} (\eta) d \eta
\Big)
\widehat{\psi_\lambda}(\zeta) \\
&=
\frac{1}{ (2 \pi \lambda)^n}
\int_{{\mathbf R}^n}
\widehat{f} (\eta)
\Big(
\widehat{\psi}
\Big( \frac{\zeta - \eta}  {\lambda}
\Big) - \widehat{\psi} \Big( \frac{\zeta}{\lambda} \Big)
\Big) d  \eta
\end{align*}
and thus we obtain by Lemma \ref{estimates of FT of psi}
choosing $\vartheta = \frac{\xi}{\lambda}$
and $\lambda >5$
that
\begin{align*}
\| \langle \cdot \rangle^s \widehat{ h^\lambda  }
 \|_{L^1({\mathbf R}^n)}
&\lesssim
\int_{{\mathbf R}^n}
\Big(
\frac{1}{\lambda^n}
\int_{{\mathbf R}^{n}}
 \langle  \zeta  \rangle^s
\Big| \widehat{\psi}  \Big(  \frac{\zeta - \eta}
{\lambda}  \Big) - \widehat{\psi} \Big( \frac{\zeta}{\lambda} \Big) \Big|
 d \zeta \Big) |\widehat{f}(\eta)|
 d \eta  \\
&
\lesssim
\lambda^s
\int_{{\mathbf R}^n}
\Big(
\int_{{\mathbf R}^n}
\langle \xi \rangle^s
\Big|\widehat{\psi}
\Big( \xi - \frac{\eta}{\lambda} \Big) - \widehat{\psi}(\xi) \Big|
 d \xi
\Big) |\widehat{f} (\eta)| d \eta \\
& \lesssim
\int_{{\mathbf R}^n}
| \eta |^s
\left(
\max_{|t|\leq 4}|e^{i\frac{\eta}{\lambda}t}-1|
\right)^{1-s}
|\widehat{f} (\eta)|
d \eta.
\end{align*}
We note that  $M^{1,1}_s ({\mathbf R}^n) \hookrightarrow
{\mathcal F}L^1_s ({\mathbf R}^n)$,
and  also
$\left(
\max_{|t|\leq 4}|e^{i\frac{\eta}{\lambda}t}-1|
\right)^{1-s} \leq 2$ and
$
\left(
\max_{|t|\leq 4}|e^{i\frac{\eta}{\lambda}t}-1|
\right)^{1-s} \to 0
$
as $\lambda \to \infty$.
Therefore the Lebesgue convergence theorem yields
that
$\| \langle \cdot \rangle^s \widehat{ h^\lambda  }
 \|_{L^1({\mathbf R}^n)}
\to 0 $
$(\lambda \to \infty)$.
Therefore,
for any $\varepsilon >0$, there exists
$\lambda_0 >0$ such that
$
\| h^{\lambda_0} \|_{M^{1,1}_s} <   \varepsilon$.
Hence,
by putting  $\phi (x)=\psi_{\lambda_0} (x)$,
we have the desired result.

\end{proof}

Next we  prepare a lemma,
which corresponds to
a weighted version of
Bhimani \cite[Proposition 4.7]{Bhimani}.

\begin{lemma}
\label{weighted Bhimani lemma}
Let  $1 \leq p \leq \infty$, $0 \leq s < 1$, $f \in M^{p,1}_s ({\mathbf R}^n)$, $x_0 \in {\mathbf R}^n$
and $\varepsilon >0$.
Then there exists $\tau \in C_c^\infty ({\mathbf R}^n)$ such that

\begin{itemize}

\item[$(i)$] $\| (f- f (x_0) )  \tau  \|_{M^{p,1}_s} < \varepsilon$

\item[$(ii)$]
$\tau (x)=1$ in some neighborhood of $x_0$.

\end{itemize}

\end{lemma}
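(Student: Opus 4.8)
The plan is to deduce this from its $M^{1,1}_s$-counterpart, Lemma~\ref{weighted Bhimani Ratnakumar lemma}, by the localization trick already used in the proof of Lemma~\ref{Wiener-Levy pre-lemma 2}: first cut $f$ off to a compactly supported function, which then automatically lies in $M^{1,1}_s$. By Remark~\ref{translate of spectral synthesis} we may assume $x_0=0$. So first I would fix an auxiliary $\chi\in C_c^\infty({\mathbf R}^n)$ with $\chi\equiv 1$ on $B_2(0)$ and set $g:=\chi f$. Since $M^{p,1}_s({\mathbf R}^n)\hookrightarrow M^{\infty,1}_s({\mathbf R}^n)$ by Lemma~\ref{basic pro mod}$(iii)$, and pointwise multiplication sends $M^{\infty,1}_s\times M^{1,1}_s$ into $M^{1,1}_s$ with
\[
\|\chi f\|_{M^{1,1}_s}\lesssim \|f\|_{M^{\infty,1}_s}\,\|\chi\|_{M^{1,1}_s}
\]
(the estimate of Lemma~\ref{product estimate}; the amalgam-convolution argument given there, resting on ${\mathcal F}L^\infty*{\mathcal F}L^1\subset{\mathcal F}L^1$ and on $\ell^1_s({\mathbf Z}^n)$ being a convolution algebra, applies verbatim at the endpoint $p=\infty$, $q=1$, and for all $f\in M^{\infty,1}_s$, not only Schwartz $f$), we obtain $g\in M^{1,1}_s({\mathbf R}^n)$, with $g$ compactly supported and $g(0)=f(0)$.

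Next I would apply Lemma~\ref{weighted Bhimani Ratnakumar lemma} to $g$, with $\varepsilon$ replaced by $\varepsilon/C$, where $C$ is the norm of the embedding $M^{1,1}_s({\mathbf R}^n)\hookrightarrow M^{p,1}_s({\mathbf R}^n)$ (again Lemma~\ref{basic pro mod}$(iii)$). This produces $\phi\in C_c^\infty({\mathbf R}^n)$ with $\|(g-g(0))\phi\|_{M^{1,1}_s}<\varepsilon/C$ and $\phi\equiv 1$ on a neighbourhood of $0$. The one extra thing I need is that ${\rm supp~}\phi\subset B_2(0)$, so that $\chi\equiv 1$ on ${\rm supp~}\phi$. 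For this I would not treat Lemma~\ref{weighted Bhimani Ratnakumar lemma} as a black box but rather recall its proof, where $\phi$ is taken of the form $\psi_{\lambda_0}$ with $\psi=\psi^{(1)}*\psi^{(2)}$ as in Lemma~\ref{convolution decomposition} (there $t_0=0$, $R=1$, hence ${\rm supp~}\psi\subset B_5(0)$ and $\psi\equiv 1$ on $B_1(0)$) and with $\lambda_0$ as large as we please; then ${\rm supp~}\phi\subset B_{5/\lambda_0}(0)\subset B_2(0)$ once $\lambda_0\geq 3$, while $\phi\equiv 1$ on $B_{1/\lambda_0}(0)$.

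Finally, on ${\rm supp~}\phi\subset B_2(0)$ we have $\chi\equiv 1$, hence $g\phi=f\phi$; combined with $g(0)=f(0)$ this gives $(g-g(0))\phi=(f-f(0))\phi$ as functions on ${\mathbf R}^n$, so that
\[
\|(f-f(0))\phi\|_{M^{p,1}_s}\leq C\,\|(f-f(0))\phi\|_{M^{1,1}_s}=C\,\|(g-g(0))\phi\|_{M^{1,1}_s}<\varepsilon .
\]
Then $\tau:=\phi$ satisfies $(i)$, and $(ii)$ holds because $\phi\equiv 1$ on $B_{1/\lambda_0}(0)$ (after translating back to $x_0$). The only genuinely delicate point I foresee is this support requirement on $\phi$ in the middle step: it is what forces one to open up the proof of Lemma~\ref{weighted Bhimani Ratnakumar lemma} and exploit the explicit narrow-dilate shape of its cutoff, rather than merely quote its statement; everything else is routine bookkeeping with the inclusions $M^{1,1}_s\hookrightarrow M^{p,1}_s\hookrightarrow M^{\infty,1}_s$ and the product estimate.
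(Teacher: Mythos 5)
Your proposal is correct, and the basic strategy is the same as the paper's: localize $f$ by a compactly supported cutoff so that the localized function lands in $M^{1,1}_s({\mathbf R}^n)$ via Lemma \ref{product estimate} and the embedding $M^{p,1}_s\hookrightarrow M^{\infty,1}_s$, then invoke Lemma \ref{weighted Bhimani Ratnakumar lemma} and push the result back through $M^{1,1}_s\hookrightarrow M^{p,1}_s$. The one place you diverge is in the order of operations, and it costs you the extra work you yourself flag as the delicate point. The paper sets $h=(f-f(x_0))\psi$, i.e.\ it subtracts the value \emph{before} cutting off; this forces $h(x_0)=0$, so that $(h-h(x_0))\phi=h\phi=(f-f(x_0))\psi\phi$ identically, and one may simply take $\tau=\psi\phi$ with \emph{no} constraint on ${\rm supp~}\phi$. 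You instead set $g=\chi f$ and take $\tau=\phi$, which makes the identity $(g-g(0))\phi=(f-f(0))\phi$ depend on $\chi\equiv 1$ on ${\rm supp~}\phi$; since the statement of Lemma \ref{weighted Bhimani Ratnakumar lemma} does not control ${\rm supp~}\phi$, you are forced to reopen its proof and use that $\phi=\psi_{\lambda_0}$ has support in $B_{5/\lambda_0}(x_0)$ with $\lambda_0$ arbitrarily large. That observation is accurate, so your argument goes through, but the paper's choice of auxiliary function makes it unnecessary: defining $\tau$ as the \emph{product} of the two cutoffs lets you treat Lemma \ref{weighted Bhimani Ratnakumar lemma} as a black box.
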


\begin{proof}
Let $\psi \in C^\infty_c({\mathbf R}^n)$
be such that $\psi(x) =1$ on some neighborhood of
$x_0$, and set
$h(x) =(f(x) - f(x_0))  \psi (x)   $.
We note that $h(x_0) =0$.
Since $h \in M^{1,1}_s ({\mathbf R}^n)$,
Lemma \ref{weighted Bhimani Ratnakumar lemma}
implies that there exists $\phi \in C^\infty_c ({\mathbf R}^n)$
such that
$
\|(h -h(x_0))  \phi  \|_{M^{1,1}_s} < \varepsilon
$
and $\phi =1$ on some neighborhood of $x_0$.
Now we define
$\tau = \psi \phi \in C^\infty_c ({\mathbf R}^n) $.
Then we see that  $\tau =1$ on some neighborhood of $x_0$,
and we have by $M^{1,1}_s ({\mathbf R}^n) \hookrightarrow  M^{p,1}_s ({\mathbf R}^n)$
\begin{align*}
\| (f- f(x_0)) \tau  \|_{M^{p,1}_s}
=
\|   (h - h(x_0) ) \phi  \|_{M^{p,1}_s}
 \lesssim  \| (h- h(x_0)) \phi  \|_{M^{1,1}_s}.
\end{align*}
Hence we obtain the desired result.

\end{proof}

\begin{remark}
Recall that $M^{p,1}_s ({\mathbf R}^n)$
is a Banach algebra.
Then it follows from Lemma \ref{weighted Bhimani lemma}
that $M^{p,1}_s ({\mathbf R}^n)$
satisfies the condition of Wiener-Ditkin
(cf. \cite[Ch.2, \S 4.3]{Reiter}, \cite[Definition 2.4.7]{Reiter-Stegeman 2000}),
i.e., for every point $x_0 \in {\mathbf R}^n$
the following holds:
for any function $f \in M^{p,1}_s({\mathbf R}^n)$
vanishing at $x_0$ and any neighborhood ${\mathcal U}_0$ of $0$ in
$M^{p,1}_s({\mathbf R}^n)$,
there exists $\tau \in M^{p,1}_s({\mathbf R}^n)$
such that  $(i)$ $\tau$ is constant $1$ near
$x_0$, and $(ii)$ $f \cdot \tau \in {\mathcal U}_0$.

\end{remark}

\subsection{The proof of Theorem \ref{1 point spectral}}
We first note that
$J_0 ( \{ x_0 \} )  \subset I( \{ x_0 \} )$.
Since $J(\{ x_0 \})$ is the smallest closed ideal
containing $J_0(\{ x_0 \})$,
it follows from   Lemma \ref{Basic lemma 1 closed ideal} that
$J(\{ x_0 \} )  \subset I(\{ x_0 \} )$.
We next  prove $I(\{ x_0 \} )  \subset J(\{ x_0 \} )  $.
Since $I(\{ x_0 \})$
is an closed ideal with
$J_0 ( \{ x_0 \} )  \subset I( \{ x_0 \} )$,
it suffices to prove that
$I (\{ x_0 \}) \subset \overline{ J_0 (\{ x_0 \})   }$.
Let $f \in I (\{ x_0 \})$ and $\varepsilon >0$.
We note that $f(x_0)=0$.
By Lemma \ref{weighted Bhimani lemma},
there exists $\tau \in C_c^\infty ({\mathbf R}^n)$ such that
$\| \tau f \|_{M^{p,1}_s} < \varepsilon$ and $\tau (x) =1$
on some neighborhood of $x_0$.
Now we set $g = (1- \tau) f$. Then
we have $g = f - \tau f \in M^{p,1}_s ({\mathbf R}^n)$,
$g (x)=0$ on some neighborhood of $x_0$
and
$\| f -g  \|_{M^{p,1}_s} = \| \tau f \|_{M^{p,1}_s} < \varepsilon$.
Hence we obtain the desired result.


\section{Inclusion relation between $M^{p,1} $ and ${\mathcal F}\hspace{-0.08cm} A_p$}
\label{A relation between Mp10}

In this
section, we consider the inclusion relation between
 the modulation space
$M^{p,1} ({\mathbf R})$ and the Fourier
Segal algebra ${\mathcal F}\hspace{-0.08cm} A_p({\mathbf R})$.
Here, ${\mathcal F}\hspace{-0.08cm} A_p({\mathbf R})$ is the space defined by
the norm
$$
\| f \|_{{\mathcal F}\hspace{-0.08cm} A_p} = \| f \|_{L^p} + \| \widehat{f} \|_{L^1}.
$$
We note that
since
${\mathcal F}\hspace{-0.08cm} A_p({\mathbf R})$ is the Fourier image of
Segal algebra $A_p({\mathbf R})$ which is defined by
the norm
$$
\| f \|_{A_p} = \|  \widehat{f} \|_{L^p} + \| f \|_{L^1}
$$
(see \cite{Reiter}, \cite{Reiter-Stegeman 2000}, \cite{Yap} for more details),
${\mathcal F}\hspace{-0.08cm} A_p({\mathbf R})$ is  a  (abstract) Segal algebra
in $A({\mathbf R}) (= {\mathcal F}L^1_0 ({\mathbf R}))$
in the following sense.

\begin{itemize}
	\item[(i)] ${\mathcal F}\hspace{-0.08cm} A_p({\mathbf R})$ is a commutative Banach algebra with
	pointwise multiplication and the norm
	$\| f \|_{{\mathcal F} \hspace{-0.08cm}  A_p}$.
	
	\item[(ii)] ${\mathcal F}\hspace{-0.08cm} A_p ({\mathbf R})$ is a dense subset
	of  the Fourier algebra $A({\mathbf R})$.
	
	\item[(iii)] ${\mathcal F}\hspace{-0.08cm} A_p({\mathbf R})$ is isometrically invariant under modulation 	operators:
$$\| M_\xi f \|_{{\mathcal F}\hspace{-0.08cm} A_p} = \| f \|_{{\mathcal F}\hspace{-0.08cm} A_p}, \quad \forall \xi \in {\mathbf R}. $$
\end{itemize}
Moreover,  ${\mathcal F}\hspace{-0.08cm} A_p({\mathbf R})$ has similar properties to $A({\mathbf R})$ (see \cite{Lai}, \cite{Larsen}).
On the other hand,
Lemmas \ref{basic pro mod}, \ref{LocFLp}
 and \ref{inclusion Mp1s subset FL1} imply
that
$M^{p,1} ({\mathbf R}) \hookrightarrow {\mathcal F}\hspace{-0.08cm} A_p({\mathbf R}) $ for $1 \leq p \leq 2$.
Therefore, it is natural to ask whether $M^{p,1} ({\mathbf R})$ is a proper subset of ${\mathcal F}\hspace{-0.08cm} A_p({\mathbf R})$.

%
\begin{theorem}
\label{inclusion Mp1 subset Ap}
For $1 \leq p \leq 2$ we have the   proper, dense inclusion
$M^{p,1}({\mathbf R}) \subsetneq {\mathcal F}\hspace{-0.08cm} A_p({\mathbf R})$.
\end{theorem}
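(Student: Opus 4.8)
The plan is as follows. The continuous inclusion $M^{p,1}({\mathbf R})\hookrightarrow {\mathcal F}\hspace{-0.08cm}A_p({\mathbf R})$ was recorded just above, so two things remain: density and strictness. Density is soft: $A_p({\mathbf R})$ is a Segal algebra, hence ${\mathcal S}({\mathbf R})$ is dense in it (cf.\ \cite{Reiter}, \cite{Reiter-Stegeman 2000}); since ${\mathcal F}\colon A_p({\mathbf R})\to{\mathcal F}\hspace{-0.08cm}A_p({\mathbf R})$ is a topological isomorphism, ${\mathcal S}({\mathbf R})$ is dense in ${\mathcal F}\hspace{-0.08cm}A_p({\mathbf R})$, and as ${\mathcal S}({\mathbf R})\subset M^{p,1}({\mathbf R})$ the inclusion is dense. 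The real work is to exhibit some $f\in{\mathcal F}\hspace{-0.08cm}A_p({\mathbf R})\setminus M^{p,1}({\mathbf R})$. I would test membership by the two descriptions already in hand: $\|f\|_{M^{p,1}}\approx\sum_{k\in{\mathbf Z}}\|\varphi(D-k)f\|_{L^p}$ for a BUPU $\varphi$ which, by Remark \ref{equivalent norm of Mpqs}, may be chosen with $\varphi\equiv1$ on $[-\tfrac1{10},\tfrac1{10}]$ and ${\rm supp\,}\varphi\subset[-1,1]$; and $f\in{\mathcal F}\hspace{-0.08cm}A_p({\mathbf R})$ if and only if $f\in L^p({\mathbf R})$ and $\widehat f\in L^1({\mathbf R})$.

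The function I would build lives most naturally on the Fourier side as a lacunary sum of dilated bumps with random signs. Fix $w\in C_c^\infty({\mathbf R})\setminus\{0\}$ supported in $[-\tfrac1{10},\tfrac1{10}]$ and set $u^{(\delta)}(\xi)=\delta^{-1}w(\xi/\delta)$ for $0<\delta\le1$; then ${\rm supp\,}u^{(\delta)}\subset[-\tfrac1{10},\tfrac1{10}]$, $\|u^{(\delta)}\|_{L^1}=\|w\|_{L^1}$, and $\mathcal F^{-1}u^{(\delta)}(x)=(\mathcal F^{-1}w)(\delta x)$, so $\|\mathcal F^{-1}u^{(\delta)}\|_{L^p}=\delta^{-1/p}\|\mathcal F^{-1}w\|_{L^p}$. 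Let $(\varepsilon_k)_{k\ge1}$ be independent $\pm1$ signs and put $\widehat{f_\varepsilon}=\sum_{k\ge1}\varepsilon_k k^{-2}T_{3k}u^{(k^{-p})}$, $f_\varepsilon=\mathcal F^{-1}\widehat{f_\varepsilon}$. Since the summands have pairwise disjoint supports inside $3k+[-\tfrac1{10},\tfrac1{10}]$, the series converges in $L^1$ and $\|\widehat{f_\varepsilon}\|_{L^1}=\|w\|_{L^1}\sum_{k\ge1}k^{-2}<\infty$, so $\widehat{f_\varepsilon}\in L^1$ for every $\varepsilon$. Because the blocks are centred $3$ apart while ${\rm supp\,}T_{3k}\varphi\subset3k+[-1,1]$ and $\varphi\equiv1$ on $[-\tfrac1{10},\tfrac1{10}]$, one checks $T_{3k}\varphi\cdot\widehat{f_\varepsilon}=\varepsilon_k k^{-2}T_{3k}u^{(k^{-p})}$ exactly, whence $\varphi(D-3k)f_\varepsilon(x)=\varepsilon_k k^{-2}e^{3ikx}(\mathcal F^{-1}w)(k^{-p}x)$ and $\|\varphi(D-3k)f_\varepsilon\|_{L^p}=k^{-2}\cdot k\cdot\|\mathcal F^{-1}w\|_{L^p}=k^{-1}\|\mathcal F^{-1}w\|_{L^p}$. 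Hence $\|f_\varepsilon\|_{M^{p,1}}\gtrsim\sum_{k\ge1}k^{-1}\|\mathcal F^{-1}w\|_{L^p}=\infty$, so $f_\varepsilon\notin M^{p,1}$ regardless of the signs.

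The step that does the work is showing $f_\varepsilon\in L^p$ for a suitable choice of signs, and here the cancellation is indispensable. I would estimate, via Khintchine's inequality, ${\mathbf E}\,\|f_\varepsilon\|_{L^p}^p=\int_{\mathbf R}{\mathbf E}\big|\sum_k\varepsilon_k k^{-2}e^{3ikx}(\mathcal F^{-1}w)(k^{-p}x)\big|^p dx\lesssim_p\int_{\mathbf R}\big(\sum_k k^{-4}|(\mathcal F^{-1}w)(k^{-p}x)|^2\big)^{p/2}dx$, and then bound the inner square sum by splitting it at $k\approx|x|^{1/p}$: below the cut $k^{-p}|x|\gtrsim1$ so the Schwartz decay of $\mathcal F^{-1}w$ takes over, above the cut the tail is dominated by $\sum_{k>|x|^{1/p}}k^{-4}\approx|x|^{-3/p}$, and both pieces give $\sum_k k^{-4}|(\mathcal F^{-1}w)(k^{-p}x)|^2\lesssim(1+|x|)^{-3/p}$. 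Then ${\mathbf E}\,\|f_\varepsilon\|_{L^p}^p\lesssim\int_{\mathbf R}(1+|x|)^{-3/2}dx<\infty$, so $\|f_\varepsilon\|_{L^p}<\infty$ almost surely; fixing one such $\varepsilon$ gives $f_\varepsilon\in L^p({\mathbf R})\cap{\mathcal F}L^1({\mathbf R})={\mathcal F}\hspace{-0.08cm}A_p({\mathbf R})$ but $f_\varepsilon\notin M^{p,1}({\mathbf R})$, and combined with the density this finishes the theorem.

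The main obstacle is precisely the verification $f_\varepsilon\in L^p$. For $p=1$ the embedding $M^{1,1}\hookrightarrow L^1$ is itself the $\ell^1$-summability assertion we are trying to defeat, so no construction with non-negative coefficients can succeed, and the $\ell^2$-cancellation supplied by the random signs (through Khintchine) is what makes $f_\varepsilon\in L^1$ possible while $\|f_\varepsilon\|_{M^{1,1}}=\infty$; the separating mechanism in all cases $1\le p\le2$ is the dilation mismatch $\|\mathcal F^{-1}u^{(k^{-p})}\|_{L^p}=k\,\|\mathcal F^{-1}w\|_{L^p}\to\infty$ against the bounded $\|u^{(k^{-p})}\|_{L^1}=\|w\|_{L^1}$. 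Everything else — the exact block extraction $\varphi(D-3k)f_\varepsilon$, the Schwartz tail estimate for the $k$-sum, and the density of ${\mathcal S}$ in $A_p$ — is routine bookkeeping.
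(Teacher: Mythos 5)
Your argument is correct, and it reaches the conclusion by a genuinely different route than the paper. For properness the paper does not exhibit an element of ${\mathcal F}A_p({\mathbf R})\setminus M^{p,1}({\mathbf R})$ directly when $1\le p<2$: it uses the Rudin--Shapiro/Kahane construction to produce, for every $\varepsilon>0$, discrete measures $\mu,\nu$ with total mass $1$ but $\|\widehat\mu\|_{L^\infty},\|\widehat\nu\|_{L^\infty}<\varepsilon$, builds $\widehat f=\mu*(\widehat\nu\,\phi)$, shows $\|f\|_{M^{p,1}}\gtrsim\|{\mathcal F}^{-1}\phi\|_{L^1}$ while $\|f\|_{L^p}+\|\widehat f\|_{L^1}\lesssim\varepsilon$, and then invokes the closed graph theorem to rule out set-theoretic equality; the case $p=2$ is handled by a separate explicit example $\widehat f=\sum_k k^{-1}\chi_{I_k}$ with $|I_k|=2(\log k)^{-2}$. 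Your construction instead separates the spaces by the $L^1$-versus-$L^p$ dilation mismatch of a single bump ($\|u^{(\delta)}\|_{L^1}$ bounded while $\|{\mathcal F}^{-1}u^{(\delta)}\|_{L^p}=\delta^{-1/p}\|{\mathcal F}^{-1}w\|_{L^p}$ blows up), placed on lacunary frequency blocks so that the $M^{p,1}$ lower bound $\sum_k k^{-1}$ diverges for every choice of signs, and uses Rademacher randomization plus the Khintchine/Jensen bound ${\mathbf E}|X|^p\le({\mathbf E}|X|^2)^{p/2}$ to get $f_\varepsilon\in L^p$ almost surely; the square-function estimate $\sum_k k^{-4}|({\mathcal F}^{-1}w)(k^{-p}x)|^2\lesssim(1+|x|)^{-3/p}$ checks out, giving integrability of $(1+|x|)^{-3/2}$ uniformly in $1\le p\le 2$. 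What each approach buys: the paper's argument is deterministic and quantifies the failure of the reverse norm inequality (a statement of independent interest), but needs the closed graph theorem and a case split at $p=2$; yours produces an actual element of the difference set in one unified stroke for the whole range $1\le p\le 2$, at the cost of being non-constructive in the choice of signs. Your density argument via the Segal algebra property of $A_p({\mathbf R})$ is fine (the paper leaves density implicit), and your remark that for $p=1$ the embedding $M^{1,1}\hookrightarrow L^1$ forces genuine cancellation correctly identifies why the random signs are not merely a convenience there.
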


We remark that Losert \cite[Theorem 2]{Losert}
implies that  Theorem \ref{inclusion Mp1 subset Ap}
holds for the case  $p=1$.

\subsection{Technical lemma}
To prove Theorem \ref{inclusion Mp1 subset Ap},
we prepare  the following lemma.
\begin{lemma}
\label{existance of discrete measure}
Let $K$ be a compact subset of $ {\mathbf R}$.
Then for any $\varepsilon >0$, there exist a discrete measure $\mu \in M({\mathbf R})$
and $m,N \in {\mathbf N}$ for which the following conditions hold.
\begin{itemize}

\item[$(i)$] $\| \mu \|_{M(\mathbf R{})}=1 $ and $\| \widehat{\mu} \|_{L^\infty}  < \varepsilon$.

\item[$(ii)$]
${\rm supp~} \mu $ consists of $2^m$ different points and is expressed as
$$
{\rm supp~} \mu =
\Big\{  x \in {\mathbf R} ~\Big| ~
x= \sum^{m}_{j=1}  \alpha_j N_j, \  \alpha_j= 0,1, \ N_j = 2^{j-1}N \Big\}.
$$
\item[$(iii)$] The sets
$ \{ - x+ K  ~|~ x \in{\rm supp~ \mu} \}$  are mutually disjoint.
\end{itemize}
\end{lemma}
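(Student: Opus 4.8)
The plan is to build $\mu$ as a lacunary (Riesz-type) convolution product of elementary mean-zero measures. Since $K$ is compact, fix $M_0>0$ with $K\subset[-M_0,M_0]$. For parameters $m,N\in{\mathbf N}$ to be chosen below, put $N_j=2^{j-1}N$ $(1\le j\le m)$ and set
$$
\mu=\mu_1*\mu_2*\cdots*\mu_m,\qquad \mu_j=\tfrac12(\delta_0-\delta_{N_j}).
$$
Using $\delta_0-\delta_{N_j}$ rather than $\delta_0+\delta_{N_j}$ is the crucial point: the Fourier--Stieltjes transform of $\mu_j$ has modulus $|\sin(N_j\xi/2)|$, which can be kept small everywhere, whereas the ``$+$'' choice would give $|\cos(N_j\xi/2)|$, equal to $1$ at $\xi=0$.

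First I would record the facts valid for every choice of $m,N$. Expanding the convolution product,
$$
\mu=\frac1{2^m}\sum_{\alpha\in\{0,1\}^m}(-1)^{|\alpha|}\,\delta_{x_\alpha},\qquad x_\alpha=\sum_{j=1}^m\alpha_jN_j=N\sum_{j=1}^m\alpha_j2^{j-1},
$$
so ${\rm supp}\,\mu=\{0,N,2N,\dots,(2^m-1)N\}$ is exactly the set described in (ii) and consists of $2^m$ distinct points by uniqueness of binary expansions; moreover the atoms are pairwise distinct with masses $\pm2^{-m}$, hence $\|\mu\|_{M({\mathbf R})}=2^m\cdot2^{-m}=1$. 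On the transform side $\widehat{\mu}(\xi)=\prod_{j=1}^m\tfrac12(1-e^{-iN_j\xi})$, and therefore, writing $\theta=N\xi/2$,
$$
|\widehat{\mu}(\xi)|=\prod_{j=1}^m\bigl|\sin(2^{j-1}\theta)\bigr|.
$$

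The quantitative heart of the argument is the uniform decay of this lacunary product. I would use the elementary bound
$$
\sup_{\psi\in{\mathbf R}}|\sin\psi\,\sin2\psi|=\sup_{\psi}2\sin^2\psi\,|\cos\psi|=\frac{4\sqrt3}{9}=:c<1,
$$
obtained by maximizing $2(1-u^2)|u|$ over $u=\cos\psi$. Grouping the $m$ factors above into consecutive pairs $(2k-1,2k)$ and applying this bound with $\psi=2^{2k-2}\theta$ gives $|\widehat{\mu}(\xi)|\le c^{\lfloor m/2\rfloor}$ for all $\xi$, a bound \emph{independent of $N$}. I would then fix $m$ so large that $c^{\lfloor m/2\rfloor}<\varepsilon$, which yields $\|\widehat{\mu}\|_{L^\infty}<\varepsilon$; combined with the facts recorded above this establishes (i) and (ii).

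Finally, with $m$ now fixed, choose $N\in{\mathbf N}$ with $N>2M_0$. Any two distinct points $x,x'\in{\rm supp}\,\mu$ satisfy $|x-x'|\ge N>2M_0$, while $-x+K\subset[-x-M_0,-x+M_0]$ and $-x'+K\subset[-x'-M_0,-x'+M_0]$ are intervals of length $2M_0$ whose centres lie more than $2M_0$ apart; hence these intervals, and a fortiori the sets $-x+K$ and $-x'+K$, are disjoint, which gives (iii). The only genuinely non-routine point is the pairing estimate $\sup_\psi|\sin\psi\,\sin2\psi|<1$, together with the observation that the resulting bound $c^{\lfloor m/2\rfloor}$ on $\|\widehat{\mu}\|_{L^\infty}$ does not depend on $N$, so that $m$ may be selected first to force (i) and $N$ afterwards to force (iii).
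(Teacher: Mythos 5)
Your proof is correct, but it is a genuinely different construction from the one in the paper. The paper uses the Rudin--Shapiro recursion: it builds a pair of measures $\mu_j,\nu_j$ via $\mu_j=\mu_{j-1}+\nu_{j-1}*\delta_{N_j}$, $\nu_j=\mu_{j-1}-\nu_{j-1}*\delta_{N_j}$, exploits the parallelogram identity $|\widehat{\mu_m}|^2+|\widehat{\nu_m}|^2=2^{m+1}$, and normalizes to get $\|\widehat{\mu}\|_{L^\infty}\le 2^{\frac12-\frac m2}$. You instead take the lacunary convolution product $\mu=\ast_{j=1}^m\tfrac12(\delta_0-\delta_{N_j})$, whose atoms carry the Thue--Morse signs $(-1)^{|\alpha|}$, and control $\prod_{j=1}^m|\sin(2^{j-1}\theta)|$ by the elementary pairing bound $\sup_\psi|\sin\psi\,\sin2\psi|=\tfrac{4\sqrt3}{9}<1$. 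Both arguments are complete: in each case the support is exactly the dyadic set of $2^m$ points required in (ii), the equal masses $\pm2^{-m}$ give $\|\mu\|_{M(\mathbf R)}=1$, the sup-norm bound on $\widehat\mu$ is independent of $N$ (so $m$ can be fixed first and $N>2M_0$ chosen afterwards for (iii)), and the key maximization of $2|u|(1-u^2)$ at $u=1/\sqrt3$ is carried out correctly. What each approach buys: the Rudin--Shapiro recursion achieves the essentially optimal decay $\|\widehat{\mu}\|_{L^\infty}\lesssim(\#\operatorname{supp}\mu)^{-1/2}=2^{-m/2}$, whereas your product only gives $(\tfrac{4\sqrt3}{9})^{\lfloor m/2\rfloor}\approx(0.877)^m$, a strictly slower rate; on the other hand your construction is completely explicit and self-contained (a single closed-form product, no auxiliary companion measure $\nu_j$), and for the purposes of this lemma any rate tending to zero suffices, so nothing is lost.
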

\begin{proof}
Our proof is based on the argument
due to Kahane  \cite[p.34-36]{Kahane},
which is also
known as the Rudin-Shapiro method.
Let $\varepsilon >0$
and take $m \in {\mathbf N}$
such that  $ 2^{ \frac{1}{2} - \frac{m}{2}  } < \varepsilon$.
We choose $N \in {\mathbf N}$ such that
the sets
$$
\Big\{
- \sum^{m}_{j=1} \alpha_j N_j  + K
~\Big|~ \alpha_j=0,1, \ N_j = 2^{j-1}N
\Big\}
$$
are mutually disjoint for different
choices of $\alpha_j \in \{0,1 \}$,
and then define $ \mu_j,  \nu_j   \in  M({\mathbf R})$
$(j=1,2, \cdots, m)$
by
$\mu_0= \nu_0 = \delta_0$
and
$$
\mu_{j} = \mu_{j-1} + \nu_{j-1} * \delta_{N_{j}},
\quad
\nu_{j} = \mu_{j-1} - \nu_{j-1} * \delta_{N_{j}}
\quad
(j=1, 2, \cdots, m).
$$
We note that the set \{ $\sum^m_{j=1} \alpha_j N_j  ~|~ \alpha_j=0,1 \}$
consists of  $2^m$  different points, and thus we obtain
$\| \mu_m \|_{M({\mathbf R})} = 2^m$.
Moreover, it follows from
$$
(  \mu_{j-1} \pm \mu_{j-1} * \delta_{N_j}  )^\wedge (\xi)
= \widehat{\mu_{j-1}}   (\xi)
\pm
\widehat{ \nu_{j-1}  } (\xi) e^{-i \xi N_{j}}
\quad(j=1, 2, \cdots, m)
$$
that
\begin{align*}
| \widehat{\mu_{m}}  (\xi) |^2 + |  \widehat{\nu_{m}} (\xi)  |^2
&= 2 ( | \widehat{ \mu_{m-1}   } (\xi) |^2 + | \widehat{\nu_{m-1}} (\xi)  |^2 ) \\
&= \cdots  \\
&= 2^m ( | \widehat{\mu_0} (\xi) |^2 + | \widehat{\nu_0} (\xi) |^2 ) \\
&= 2^{m+1}.
\end{align*}
This gives
$$
| \widehat{\mu_{m}} (\xi) |
\leq
(|\widehat{\mu_m} (\xi) |^2 + |\widehat{\nu_m} (\xi) |^2)^{\frac{1}{2}}
\leq 2^{ \frac{m+1}{2}  }.
$$
Now we set
$\mu = 2^{-m} \mu_m $.
Then we can easily see $\| \mu \|_{M({\mathbf R})} =1$
and
$$
\| \widehat{\mu} \|_{L^\infty}
\leq
2^{-m}  2^{ \frac{m+1}{2}   }
= 2^{ \frac{1}{2}  - \frac{m}{2} }    < \varepsilon.
$$
Hence, we obtain the desired result.

\end{proof}

\begin{remark}
\label{express of mu}

We note that $\mu$ is also expressed as
$$
\mu =
2^{-m}
\sum_{ \alpha_j =0,1 }  C_{(\alpha_1, \cdots, \alpha_{2m})} \delta_{\alpha_1 N_1 + \alpha_2 N_2 + \cdots + \alpha_{2m} N_{2m}},
$$
where each term $C_{(\alpha_1, \cdots, \alpha_{2m})} $ is  equal to  $1$ or $-1$.
For the sake of simplicity, we simply write
$\mu = \sum^{2^m}_{j=1} a_j \delta_{\ell(j)}$
with ${\rm supp~} \mu = \{ \ell(j) \in {\mathbf N} ~|~ j=1,2, \cdots, 2^m\}$,
where $a_j = 2^{-m}$ or $a_j= - 2^{-m}$.

\end{remark}

\begin{remark}
\label{modification discrete measure}
Let $1 \leq p < 2$ and  choose $r \in {\mathbf N}$ such that
$2^{ \frac{1}{2} - r ( \frac{1}{p} - \frac{1}{2}  )   } < \varepsilon $.
In the same way as above, we define  $\nu \in M({\mathbf R})$ by
$$
\nu =
2^{- \frac{r}{p}}
\mu_{m^\prime}
= \sum^{2^{r}}_{j=1} b_j \delta_{N(j)}.
$$
Then we have
$\sum^{2^{r}}_{j=1} |b_j|^p =1$, $\| \widehat{\nu} \|_{L^\infty} < \varepsilon$
and the sets
$\{ - N(j) +  K ~|~  N(j) \in {\rm supp~} \nu \}  $ are mutually disjoint.

\end{remark}

\subsection{The proof of  Theorem \ref{inclusion Mp1 subset Ap}}
$(i)$ We first consider the case $1 \leq p < 2$.
As mentioned at the beginning of Section \ref{A relation between Mp10},
$M^{p,1}({\mathbf R}) \subset {\mathcal F}\hspace{-0.08cm}A_p({\mathbf R})$ holds.
Let $\phi \in C^\infty_c({\mathbf R}) \setminus \{ 0 \}$ be  such that
$K= {\rm supp~} \phi \subset  [ - \frac{1}{10}, \frac{1}{10} ] $.
Then Lemma \ref{existance of discrete measure} yields
that for any $\varepsilon >0$, there exists $\mu \in M({\mathbf R})$
and $m \in {\mathbf N}$
such that $\| \mu \|_{M({\mathbf R})} =1$,
$\| \widehat{\mu} \|_{L^\infty} < \varepsilon$
and
$$
{\rm supp~} \mu =  \{ \ell(j)  \in {\mathbf N}  ~|~ j=1,2, \cdots,    2^m \},
$$
where  the sets
$\{ - \ell(j) +  K ~|~  \ell(j) \in {\rm supp~} \mu \}  $ are mutually disjoint.
It follows from  Remark \ref{express of mu} that
 $\mu$ can be  represented as
$
\mu =
\sum^{2^m}_{j=1}  a_j
 \delta_{\ell (j)}
$
with $\ell(j) \in {\rm supp~} \mu$ and
$a_j =  2^{-m}$ or $a_j = - 2^{-m}$.
Moreover, since $\widehat{\phi} \in L^p({\mathbf R})$,
we see that for any $\eta>0$, there exists a compact subset
 $K_\eta = [-R,R] \subset {\mathbf R}$ for some $R>0$
such that
$
\int_{ {\mathbf R}  \setminus K_\eta } |\widehat{\phi} (\xi) |^p d \xi < \eta.
$
Furthermore,  we have by Remark \ref{modification discrete measure}
that there exist
 $r, N \in {\mathbf N}$ and
$\nu =
\sum^{2^r}_{j=1}  b_j
 \delta_{N(j)}
 \in M({\mathbf R})$
such that
$ \sum^{2^r}_{j=1} |b_j|^p  =1$,
$\| \widehat{\nu} \|_{L^\infty} < \varepsilon$
and
$$
{\rm supp~} \nu =  \{ N(j)  \in {\mathbf N}  ~|~ j=1,2, \cdots,    2^r \},
$$
where  the sets
$\{ - N(j) +  K_\eta ~|~  N(j) \in {\rm supp~} \nu \}  $ are mutually disjoint,
and
$b_j =  2^{ - \frac{r}{p} }  $ or $b_j = - 2^{ - \frac{r}{p}  }$.

Now we define $f$ by
$\widehat{f} = \mu * (\widehat{\nu} \phi)$.\\

\noindent
{\bf Step 1.}
Firstly, we  estimate
$
\| f \|_{M^{p,1}}.
$
Let $\varphi \in {\mathcal S}({\mathbf  R})$ be such that
$\sum_{k \in {\mathbf Z}}\varphi(\xi -k) =1 $,
${\rm supp~} \varphi \subset [-1,1] $ and
$\varphi (\xi) =1$ on $[ - \frac{1}{10}, \frac{1}{10} ]$.
Since
\begin{align*}
\widehat{f} (\xi) =
\Big(\Big(  \sum^{2^m}_{j=1} a_j \delta_{\ell(j)}      \Big) * (\widehat{\nu} \phi )
\Big)(\xi)
=
\sum^{2^m}_{j=1} a_j
(\widehat{\nu}  \phi)
(\xi - \ell(j)),
\end{align*}
we have
$$
\varphi(\xi - \ell(k))
\widehat{f} (\xi)
=  a_k  (   \widehat{\nu}  \phi ) (\xi - \ell(k))
$$
and
\begin{align*}
\varphi(D-k) f (x)
&=
\frac{a_k}{2 \pi}
\int_{{\mathbf R}}
( \widehat{\nu} \phi ) (\xi - \ell(k)) e^{ix \xi} d \xi
=
a_k e^{ix \ell (k)}  {\mathcal F}^{-1} (\widehat{\nu}  \phi) (x).
\end{align*}
Therefore, we obtain
$$
\| \varphi (D-k) f \|_{L^p} = |a_k| \|  {\mathcal F}^{-1} ( \widehat{\nu} \phi ) \|_{L^p}
= |a_k| \| \nu *  ( {\mathcal F}^{-1}\phi ) \|_{L^p}.
$$
On the other hand,
since  $\nu= \sum^{2^r}_{j=1} b_j \delta_{N(j)} $,
we have
\begin{align*}
\| \nu * ( {\mathcal F}^{-1} \phi) \|_{L^p}
= \Big\| \sum^{2^r}_{j=1} b_j ({\mathcal F}^{-1}  \phi) (\cdot  - N(j)) \Big\|_{L^p}.
\end{align*}
Moreover, since $\{ -N(j) + K_\eta \}$ are mutually disjoint
and
$$
\Big| A + \sum^{2^r}_{j=1} B_j \Big|^p
\geq
\frac{1}{2^p} |A|^p - \Big| \sum^{2^r}_{j=1} B_j \Big|^p
\geq
\frac{1}{2^p} |A|^p - 2^{rp}
\sum^{2^r}_{j=1} |B_j|^p,
$$
we obtain that
\begin{align*}
&\Big| \sum^{2^r}_{j=1} b_j  ( {\mathcal F}^{-1}  \phi)  (x  - N(j)) \Big|^p \\
&=
\Big| \sum^{2^r}_{j=1} b_j
\Big( ( \chi_{K_\eta } \cdot  ( {\mathcal F}^{-1}  \phi)) (x  - N(j))
+  ( \chi_{ {\mathbf R}  \setminus K_\eta }  \cdot ( {\mathcal F}^{-1}  \phi)) (x  - N(j))  \Big|^p \\
&
\geq
\frac{1}{2^p}
\sum^{2^r}_{j=1} |b_j |^p
| ( \chi_{K_\eta } \cdot ( {\mathcal F}^{-1}  \phi)) (x  - N(j)) |^p
\\
& \quad - 2^{rp} \sum^{2^r}_{j=1} |b_j|^p |(\chi_{ {\mathbf R}  \setminus K_\eta } \cdot  ( {\mathcal F}^{-1}  \phi)) (x  - N(j)) |^p.
\end{align*}
Since  $\sum^{2^r}_{j=1} |b_j|^p =1 $, we have
\begin{align*}
&\| \nu * ( {\mathcal F}^{-1}  \phi) \|_{L^p}^p \\
&\geq
\sum^{2^r}_{j=1}
|b_j|^p
\Big( \frac{1}{2^p}
\int_{\mathbf R}
| ( \chi_{K_\eta } \cdot  ( {\mathcal F}^{-1}  \phi)) (x  - N(j)) |^p dx \\
&\quad
- 2^{rp} \int_{\mathbf R}|(\chi_{ {\mathbf R}  \setminus K_\eta } \cdot  ( {\mathcal F}^{-1}  \phi)) (x  - N(j)) |^p
dx
\Big) \\
&=
\sum^{2^r}_{j=1}
|b_j|^p
\Big(  \frac{1}{2^p}  \int_{K_\eta} | {\mathcal F}^{-1}  \phi (x)|^p dx - 2^{rp}
\int_{{\mathbf R} \setminus K_\eta} | {\mathcal F}^{-1}  \phi (x)|^p dx \Big) \\
&=
\frac{1}{2^p} \|  {\mathcal F}^{-1}  \phi \|_{L^p}^p -
(1+2^{rp} ) \eta.
\end{align*}
Putting $\eta = \frac{1}{2^{p+1}  ( 1+ 2^{rp} ) } \|  {\mathcal F}^{-1}  \phi \|_{L^p}^p
$,
we obtain
$$
\| \nu * ( {\mathcal F}^{-1}  \phi) \|_{L^p} \geq \frac{1}{2^{1+ \frac{1}{p}}} \|  {\mathcal F}^{-1}  \phi \|_{L^p}.
$$
Hence we have by $\| \mu \|_{M({\mathbf R})} = \sum^{2^m}_{k=1} |a_k|=1$
that
\begin{align*}
\| f \|_{M^{p,1}} =
\sum_{k \in {\mathbf Z}} \| \phi(D-k) f \|_{L^p}
\geq
\frac{1}{2^{1+ \frac{1}{p}}}
\Big(
\sum_{k=1}^{2^m}
|a_k|
\Big) \|  {\mathcal F}^{-1}  \phi \|_{L^1}
= \frac{1}{2^{1+ \frac{1}{p}}} \| {\mathcal F}^{-1}  \phi \|_{L^1}.
\end{align*}

\noindent
{\bf Step 2.}
Secondly, we  show that  $\| \widehat{f}  \|_{L^1} \leq \varepsilon \| \phi \|_{L^1}$.
Since $\widehat{f} = \mu * (\widehat{\nu} \phi)$,
$\| \widehat{\nu} \|_{L^\infty} < \varepsilon$
and $
\mu =
\sum^{2^m}_{j=1}  a_j
 \delta_{\ell (j)}$ with $\| \mu \|_{M({\mathbf R})}
= \sum^{2^m}_{j=1} |a_j| =1$, we obtain
\begin{align*}
\| \widehat{f} \|_{L^1}
& \leq
\sum^{2^m}_{j=1}
|a_j| \| (\widehat{\nu} \phi) (\cdot - \ell(j)) \|_{L^1}
\leq \| \widehat{\nu} \|_{L^\infty} \| \phi \|_{L^1}
\sum^{2^m}_{j=1} |a_j|
< \varepsilon \| \phi \|_{L^1}.
\end{align*}

\noindent
{\bf Step 3.}
Thirdly, we show that  $\| f \|_{L^p} \leq \varepsilon \|  {\mathcal F}^{-1}  \phi \|_{L^p}$.
We note that
\begin{align*}
f (x)
&=  {\mathcal F}^{-1} ( \mu * (\widehat{\nu } \phi)   ) (x) \\
&=
{\mathcal F}^{-1}\Big(
\sum^{2^m}_{j=1} a_j (\widehat{\nu} \phi ) ( \cdot - \ell (j) )
\Big) (x) \\
&=
\frac{1}{2 \pi}
\sum^{2^m}_{j=1}
a_j
\Big(
\int_{\mathbf R }
\widehat{\nu} (\xi - \ell(j)) \phi (\xi - \ell(j))
e^{ix \xi} d \xi
\Big) \\
&=
\sum^{2^m}_{j=1} a_j e^{ix \ell(j)}  {\mathcal F}^{-1} ( \widehat{\nu} \phi ) (x) \\
&=  (\widehat{\nu}(-x) ) ( (\mu * ( {\mathcal F}^{-1}  \phi)) (x)).
\end{align*}
Therefore, we obtain
\begin{align*}
\| f \|_{L^p}
&  \leq \| \widehat{\nu} \|_{L^\infty} \| \mu * ( {\mathcal F}^{-1}  \phi) \|_{L^p}
\leq \| \widehat{\nu} \|_{L^\infty} \| \nu \|_{M({\mathbf R})} \|  {\mathcal F}^{-1}  \phi \|_{L^p}
\leq \varepsilon \|  {\mathcal F}^{-1}  \phi \|_{L^p}.
\end{align*}

\noindent
{\bf Step 4.} We note that if  $M^{p,1} ({\mathbf R}) = {\mathcal F}\hspace{-0.08cm}A_p({\mathbf R})$
set-theoretically, then  the closed graph theorem implies that  the norms
on both spaces are equivalent.
Therefore we have
\begin{align*}
\frac{1}{2^{1+ \frac{1}{p}}} \|  {\mathcal F}^{-1}  \phi \|_{L^1}
\leq \| f \|_{M^{p,1}}
\lesssim
\| f \|_{{\mathcal F}\hspace{-0.08cm}A_p({\mathbf R})}
= \| f \|_{L^p} + \| \widehat{f} \|_{L^1}
\leq \varepsilon (\| \phi \|_{L^p} + \|  {\mathcal F}^{-1}  \phi \|_{L^1}).
\end{align*}
Since $\phi \not= 0$, this is impossible.
Hence we obtain the desired result.\\

\noindent
$(ii)$ Finally, we consider the case $p=2$.
Set
$
I_k =
[
k - \frac{1}{ (\log_e k)^2  }, \
k + \frac{1}{ (\log_e k)^2 }
]
$
$(k \in {\mathbf N})$
and define $f$ by
$
\widehat{f} (\xi)
= \sum^\infty_{k=e^{10 }}
\frac{\chi_{I_k} (\xi) }{k},
$
where $\chi_E$ denotes the characteristic function of a set $E \subset {\mathbf R}$,
i.e., $\chi_E (x)=1$ (if $x \in E$), $=0$ (if $x \not\in E$).
Moreover, let
$\varphi \in {\mathcal S} ({\mathbf R})$ be such that
$\sum_{k \in {\mathbf Z}} \varphi(\xi -k) =1 $ $(\xi \in {\mathbf R})$
and $\varphi (\xi)=1$ on $[-\frac{1}{10}, \frac{1}{10}]$.
Then we have
\begin{align*}
\| \widehat{f} \|_{L^1}
&
\leq
\sum_{k \in {\mathbf Z}}
\| \varphi(\cdot - k) \widehat{f} \|_{L^1}
\leq
\sum_{k= e^{10}}^\infty
\frac{2}{k (\log_e k)^2} < \infty
\end{align*}
and
\begin{align*}
\| f \|_{L^2} \approx
\| \widehat{f} \|_{L^2}
=
\Big(
\sum^\infty_{k=e^{10}}
\frac{ |I_k|  }{k^2}
\Big)^{\frac{1}2}
=
\Big(
\sum^\infty_{k=e^{10}}
\frac{2 }{k^2  (\log_e k)^2  }
\Big)^{\frac{1}{2}}
< \infty.
\end{align*}
Therefore we obtain that  $f \in {\mathcal F}\hspace{-0.08cm}A_2({\mathbf R})$.
On the other hand,
it follows form the Plancherel theorem that
\begin{align*}
\| f \|_{M^{2,1}}
\approx \sum_{k \in {\mathbf Z}} \| \varphi (\cdot - k)  \widehat{f}  \|_{L^2}
=
 \sum_{k =e^{10} }^\infty
\frac{\| \chi_{I_k} \|_{L^2}}{k}
= \sum^\infty_{k= e^{10}}
\frac{2}{ k( \log_e k )  }
= \infty.
\end{align*}
Thus we have $f \not\in M^{2,1} ({\mathbf R})$.
Hence we obtain $f \in {\mathcal F}\hspace{-0.08cm}A_2 ({\mathbf R}) \setminus M^{2,1} ({\mathbf R})$.


\section*{Acknowledgment}
This work was supported by JSPS KAKENHI Grant Numbers 22K03328, 22K03331.

\end{document}